\documentclass{amsart}
\usepackage{hyperref}
\usepackage{times}
\usepackage{verbatim}
\usepackage[T1]{fontenc}
\usepackage[utf8]{inputenc}
\usepackage[english]{babel}
\usepackage{amssymb}
\usepackage{amsthm}
\usepackage{amsmath}
\usepackage{amstext}
\usepackage{multirow}
\usepackage{graphicx}
\usepackage[all]{xy}

\newtheorem{THM}{Theorem}
\newtheorem{thm}{Theorem}[section]

\newtheorem{lemma}[thm]{Lemma}
\newtheorem{prop}[thm]{Proposition}

\newtheorem{cor}[thm]{Corollary}

\newtheorem{example}[thm]{Example}
\newtheorem{remark}[thm]{Remark}

\newcommand{\mb}{\mathbb}
\newcommand{\mc}{\mathcal}
\newcommand{\R}{\mb R}
\newcommand{\C}{\mb C}
\newcommand{\PP}{\mb P}
\newcommand{\Z}{\mb Z}
\newcommand{\Q}{\mb Q}
\newcommand{\N}{\mb N}

\newcommand{\OO}{\mc O}
\newcommand{\F}{\mc F}
\newcommand{\G}{\mc G}

\DeclareMathOperator{\utype}{\mbox{utype}}
\DeclareMathOperator{\Tang}{\mbox{Tang}}
\DeclareMathOperator{\ord}{\mbox{ord}}
\DeclareMathOperator{\Hom}{\mbox{Hom}}

\newcommand{\Fol}{\mbox{Fol}(U,C)}
\newcommand{\Folf}{\widehat{\mbox{Fol}}(U,C)}
\newcommand{\Diff}{\mathrm{Diff}(\C,0)}
\newcommand{\Difffor}{\widehat{\mathrm{Diff}}(\C,0)}

\newcommand{\closed}{\mc C}
\newcommand{\lattice}{\Gamma_\tau}

\usepackage{mathtools}
\usepackage{ifmtarg}
\setcounter{secnumdepth}{4}
\setcounter{tocdepth}{1}            

\addtocounter{section}{0}             
\numberwithin{equation}{section}       

\title[Neighborhoods of elliptic curves]{Two dimensional neighborhoods of elliptic curves: formal classification and foliations.}

\author[F. Loray, O. Thom and F. Touzet]{Frank Loray, Olivier Thom and Fr\'ed\'eric Touzet}
\address{Univ Rennes, CNRS, IRMAR - UMR 6625, F-35000 Rennes, France}
\email{frank.loray@univ-rennes1.fr}
\email{frederic.touzet@univ-rennes1.fr}
\email{olivier.thom@univ-rennes1.fr}

\thanks{Supported by ANR-16-CE40-0008 project ``Foliage''}
\thanks{MSC 32}
\thanks{ Key words: Elliptic curves, formal neighborhoods}

\begin{document}
\begin{abstract} 
We classify two dimensional neighborhoods of an elliptic curve $C$ with torsion normal bundle, up to formal equivalence.  The proof makes use of the existence of a pair (indeed a pencil) of formal foliations having  $C$ as a common leaf, and the fact that neighborhoods are completely determined by the holonomy of such a pair. We also discuss
analytic equivalence and for each formal model, we show that the corresponding moduli space is infinite dimensional.
\end{abstract}
\maketitle

\sloppy
\tableofcontents

\section{Introduction}

Let $C$ be a smooth elliptic curve: $C=\C/\lattice$, where $\lattice=\Z+\tau\Z$.
Given an embedding $\iota:C\hookrightarrow U$ of $C$ into a smooth complex surface $U$,
we would like to understand the germ $(U,\iota(C))$ of neighborhood of $\iota(C)$ in $U$.
Precisely, we will say that two embeddings $\iota,{\iota}':C\hookrightarrow U,U'$ are (formally/analytically) equivalent
if there is a (formal/analytic) isomorphism $\phi:(U,\iota(C))\to(U',\iota'(C))$ between germs of neighborhoods
making commutative the following diagram
\begin{equation}\label{eq:equivNeighbId}
\xymatrix{\relax
    C \ar[r]^\iota \ar[d]_{\text{id}}  & U \ar[d]^\phi \\
    C \ar[r]^{\iota'} & U'
}\end{equation}
We want to understand the (formal/analytic) classification of such neighborhoods, i.e. up to above equivalence.
By abuse of notation, we will still denote by $C$ the image $\iota(C)$ of its embedding in $U$.

\subsection{Some known results}
A first invariant is the normal bundle $N_C$ of $C$ in the surface $U$, or we better should say, its class in 
the Picard group $\mathrm{Pic}(C)$. Its degree $d=\deg(N_C)$ coincide with the self-intersection $C\cdot C$
of the curve in the surface: it is the unique topological invariant. 
We say that the germ of neighborhood $(U,C)$
is {\bf linearizable} if it is equivalent to the germ of neighborhood of the zero section in the total space of $N_C$:
$(U,C)\sim(N_C,0)$. If $(U,C)$ is such that $C\cdot C<0$, then it is analytically linearizable following Grauert \cite{Grauert} 
(see also \cite{Laufer,CamachoMovasati}).

Ilyashenko studied the case $C\cdot C>0$ in \cite{Ilyashenko} and, for any fixed normal bundle $N_C$ with $d=\deg(N_C)>0$,
he constructed a huge holomorphic family of non formally equivalent germs $(U,C)$. 
The family is parametrized by the set of germs of holomorphic maps $(\C^2,0)\to(\C^d,0)$
and is in some sense the universal deformation of the linear model $(N_C,0)$ (see \cite{Ilyashenko} for more details).

{\bf Non torsion case and diophantine condition.}
So far, and like for neighborhoods of rational curves (see \cite{Grauert,Savelev,Mishustin0,MaycolFrank}), 
formal and analytic classifications coincide.
This is no longer true when we consider zero type neighborhoods, i.e. with $C\cdot C=0$, as shown by Arnold
in \cite{Arnold}. 
More precisely, Arnold proved that the neighborhood $(U,C)$ is formally linearizable when $N_C$ is {\bf not torsion}, 
and that it is also analytically linearizable provided that $N_C$ satisfies some diophantine condition:
\begin{equation}\label{eq:diophantine}
\exists\ \epsilon,\alpha>0\ \text{such that}\ \forall\ k\in\N,\ \ \  d(N_C^{\otimes k},\OO_C)\ge\frac{\epsilon}{k^\alpha}
\end{equation}
where $d$ is the distance given by the natural flat metric on the jacobian variety $\mathrm{Jac}(C)\simeq C$.
This means that $N_C$ has bad approximation by torsion bundles. 
This diophantine condition is satisfied for a set of full Lebesgue measure in $\mathrm{Jac}(C)$.
However, for somme "exceptional" bundles $N_C$ two well approximated by torsion ones, 
there are examples of non analytically linearizable neighborhoods: they contain infinitely many elliptic curves 
accumulating on $C$. In fact, there is a deep connection between the linearization of neighborhoods $(U,C)$ 
and that of germs of one variable diffeomorphisms at an indifferent fixed point:
$$\varphi(z)=az+\sum_{n>1}a_nz^n,\ \ \ \vert a\vert=1.$$
Arnold's Theorem is a reminiscence of Siegel's Linearization Theorem, while non linearizable example 
are resurgences of Kremer's non linearizable dynamics. Since Arnold's work, deep progresses have been 
done about the one dimensional problem. Linearizability's diophantine condition has been weakened by Brjuno,
and its optimality has been proved by Yoccoz. For each multiplier $a$ violating Brjuno's condition, 
Yoccoz constructed non analytically linearizable dynamics $\varphi(z)=az+o(z)$ with infinite degree of freedom
giving rise again to an infinite dimensional moduli space; moreover, P\'erez-Marco provided examples 
without periodic orbits except the fixed point $z=0$, and with huge centralizer. 

Even if there has been less progress in the neighborhood side of the story, it is interesting to note
that one can construct non linearizable neighborhoods $(U,C)$ by suspension of non linearizable dynamics $\varphi$.
This is how foliations come into the story. The linear bundle $N_C$ is flat, admitting a $1$-parameter family
of (flat) holomorphic connections. On the total space of $N_C$, they define a pencil of foliations $(\F_t)_{t\in\PP^1}$ 
with the following properties:
\begin{itemize}
\item $\F_t=\{\omega_t=0\}$ with $\omega_t=\omega_0+t\omega_\infty$,
\item $\omega_0$ is a closed logarithmic $1$-form with pole along $C$ having residue $1$ 
and purely imaginary periods along $C$ (i.e. the associated foliation $\F_0$ has unitary holonomy);
\item $\omega_\infty$ is holomorphic, transversal to $C$, inducing the holomorphic $1$-form $dx$ on $C$.
\end{itemize}
Therefore, each foliation $\F_t$ is regular; for $t\in\C$, the curve $C$ is a leaf of $\F_t$, and its holonomy is
\begin{equation}\label{eq:holPencilLinear}
\pi_1(C)\to\mathrm{GL}_1(\C)\simeq\C^*\ ;\ 
\left\{\begin{matrix}
1\mapsto e^{\alpha_1+t}\hfill\\ \tau\mapsto e^{\alpha_\tau+t\tau}
\end{matrix}\right.
\end{equation}
where $\alpha_1,\alpha_\tau$ are the fundamental periods of $\omega_0$. In particular, $\F_0$ is the unique 
unitary foliation. Going back to $(U,C)$, we get a pencil of formal foliations $(\hat{\F}_t)$ and it can be 
shown (see section \ref{sec:FoliationNonTorsion}) that there is no other formal foliation tangent to $C$, 
i.e. having $C$ as a leaf. 

On the other hand, the works of Yoccoz and P\'erez-Marco provide many representations 
$$\pi_1(C)\to\Diff\ ;\ 
\left\{\begin{matrix}
1\mapsto a_1z+o(z)\hfill\\ \tau\mapsto a_\tau z+o(z)
\end{matrix}\right.$$
with $\vert a_1\vert=\vert a_\tau\vert=1$ which are formally but non analytically linearizable.
By suspension, we can construct a neighborhood $(U,C)$ equipped with $2$ foliations
$\F_0$ and $\F_\infty$, respectively tangent and transversal to $C$, and such that 
the holonomy of $\F_0$ is precisely given by the given representation. If $(U,C)$
were analytically linearizable, $\F_0$ would be the unitary linear foliation,
contradicting the non linearizability assumption.

{\bf Torsion case, fibration and Ueda type.} 
When $N_C$ is torsion of order say $m:=\ord(N_C)$, the unitary foliation $\F_0$, defined on the total space of $N_C$,
has torsion holonomy of order $m$. It thus defines a fibration $f:N_C\to\C$ for which $mC$ is a special fiber.
For the neighborhoods $(U,C)$, there is a strong dichotomy: either the fibration persists, or not.
The former case is dealt with the theory of Kodaira-Spencer. If there is a formal fibration tangent to $C$, 
then it is actually analytic; moreover, it is analytically isotrivial if and only if $U$ admits a formal fibration transversal to $C$.
In this situation, we can define $\kappa\in\N^*\cup\{\infty\}$ such that $\F_0$ (or $f$) is isotrivial up to order $\kappa$, 
but not up to order $\kappa+1$.
In other words, if $I\subset\OO_U$ is the ideal sheaf defining $C$, and $C(n):=\mathrm{Spec}(\OO_U/I^{n+1})$ 
denotes the $n^{\text{th}}$ infinitesimal neighborhhood, then $C(\kappa)$ is the largest one 
that admits a fibration transversal to $C$.
This is the only invariant, which means that any two germs $(U,C)$ and $(U',C)$ having the same normal bundle $N_C$
and integer $\kappa=\kappa'$ are analytically equivalent. 

When there is no formal fibration, Ueda introduced in \cite{Ueda} another invariant, namely 
the Ueda type of $(U,C)$ (so named by Neeman in \cite{Neeman}) which is, similarily to $\kappa$,
the first obstruction to define $\F_0$ (see section \ref{sec:Uedatype}): 
{\it $\utype(U,C)=k\in\N^*\cup\{\infty\}$ if $C(k)$ admits a fibration tangent to $C$, but not $C(k+1)$.}
The Ueda type is a multiple of the torsion $m$ of the normal bundle $N_C$. 
When $\utype(U,C)<\infty$, any formal holomorphic function on $(U,C)$ 
is constant; indeed, such a function must be constant along $C$ and 
would then define a formal fibration if it were non constant, contradicting the finiteness of the Ueda type.

\subsection{Foliated structure of the neighborhoods} 
The main goal of the paper is to provide a formal classification of germs of neighborhoods $(U,C)$
in the missing case: $N_C$ is torsion and $\utype(U,C)=k<\infty$. This is a reminiscence
of classification of diffeomorphisms with torsion linear part
$$\varphi(z)=az+o(z)\in\Diff,\ \ \ a^k=1.$$
To this aim, we prove (see the end of section \ref{S:BifClass})

\begin{THM}\label{THM:PencilFol}
When $N_C$ is torsion and $\utype(U,C)=k<\infty$, there exists a pencil of formal foliations 
$(\F_t)_{t\in\PP^1}$ on $(U,C)$ having the following properties:
\begin{itemize}
\item $\F_t=\{\omega_t=0\}$ where $\omega_t=\omega_0+t\omega_\infty$ are formal closed meromorphic $1$-forms 
with polar locus supported on $C$ and whose multiplicities are $k+1$ for $t\in\C$ and $p+1$, $-1\leq p<k$ for $\omega_\infty$.
\item there exists a transversal $(T,z)$ such for all $t\in\mathbb C$, the foliation $\F_t$ is tangent to $C$ and its holonomy
has the form
\begin{equation}\label{eq:HolonPencilUedak}
\pi_1(C)\to\Difffor\ ;\ 
\left\{\begin{matrix}
1\mapsto a_1[z+t z^{k+1}+o(z^{k+1})]\hfill\\ \tau\mapsto a_\tau[z+(1+t\tau) z^{k+1}+o(z^{k+1})]
\end{matrix}\right.
\end{equation}
where $(a_1,a_\tau)$ is the $m$-torsion monodromy of the unitary connection on $N_C$;
\item the holonomy of $\F_0$ along the cycle $1\in\lattice$ is torsion, and this characterizes $\F_0$ in the pencil;
\item $\F_\infty$ is either transversal to $C$, or is tangent to $C$ and its holonomy
takes the form
\begin{equation}\label{eq:HolonPencilUedakappa}
\pi_1(C)\to\Difffor\ ;\ 
\left\{\begin{matrix}
1\mapsto a_1e^cz+o(z)\hfill\\ \tau\mapsto a_\tau e^{c\tau}z+o(z)
\end{matrix}\right.
\ \text{or}\ 
\left\{\begin{matrix}
a_1[z+cz^{p+1}+o(z^{p+1})]\hfill\\ a_\tau[z+c\tau z^{p+1}+o(z^{p+1})]
\end{matrix}\right.
\end{equation}
with $c\in\C^*$ and $0<p<k$ with $p\in m\N^*$.
\end{itemize}
Moreover, all formal regular foliations tangent/transversal to $C$ on $U$ are  contained in the pencil.
\end{THM}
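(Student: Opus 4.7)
The approach is to produce the two extremal foliations $\F_0$ and $\F_\infty$ directly from the Ueda-type data, verify that their linear combinations yield the claimed pencil, and finally establish that this pencil contains every regular formal foliation tangent or transversal to $C$.

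\emph{Construction of $\F_0$ and $\F_\infty$.} By definition of $\utype(U,C)=k$, there is a formal fibration tangent to $C$ over $C(k)$ that fails to extend to $C(k+1)$; equivalently, an approximate first integral $f$ with $f=0$ defining $mC$, together with a nontrivial Ueda obstruction class in a cohomology group of $C$ twisted by a power of $N_C$. The form $\omega_0$ is then constructed as a formal closed meromorphic $1$-form by renormalizing $df/f$ on $C(k)$ and correcting order by order: at each step the obstruction to closing up holomorphically lives in $H^1(C, N_C^{\otimes n})$, which vanishes unless $n \in m\Z$, and the Ueda obstruction forces $\omega_0$ to acquire a genuine pole of order exactly $k+1$ (lower-order poles are absorbed by corrections in earlier stages). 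For $\F_\infty$ we split according to whether $(U,C)$ admits a formal transversal fibration: in the affirmative case take $\omega_\infty = dg$ for such a first integral with $dg\vert_C = dx$ (so $p=-1$); otherwise a parallel inductive analysis produces a second tangent closed formal $1$-form with pole of order $p+1$, where $0 < p < k$, $p \in m\N^*$, is the lowest multiple of $m$ at which a secondary obstruction can be detected.

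\emph{Pencil, holonomy and uniqueness.} For $t\in\C$, $\omega_t := \omega_0 + t\omega_\infty$ is automatically closed, and since $\omega_0$ dominates (because $p<k$ or $\omega_\infty$ is holomorphic) it retains a pole of order exactly $k+1$. Its kernel defines a regular formal foliation tangent to $C$, with $\F_\infty$ as the limit at $t=\infty$. The holonomy is then obtained by fixing a transversal $(T,z)$ and integrating $\omega_t$ along lifts of the generators $1,\tau$ of $\pi_1(C)$: the linear part $(a_1,a_\tau)$ equals the $m$-torsion monodromy of the unitary connection on $N_C$; the corrections $tz^{k+1}$ along $1$ and $(1+t\tau)z^{k+1}$ along $\tau$ arise because $\omega_0$, by its very construction from the Ueda class, only contributes along the cycle $\tau$, whereas $\omega_\infty$ contributes proportionally to the respective periods $1$ and $\tau$. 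This asymmetry between the two generators is exactly what singles out $\F_0$ as the unique member of the pencil whose holonomy along $1$ is torsion. For the final claim, any regular formal foliation tangent or transversal to $C$ is defined by a formal closed $1$-form with polar locus in $C$ and pole order at most $k+1$; the space of such forms is two-dimensional modulo scaling (spanned by $\omega_0, \omega_\infty$), which forces the foliation into the pencil.

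The principal obstacle lies in the construction of $\omega_0$: one must simultaneously guarantee formal closedness and pinpoint the pole order as exactly $k+1$. This demands a careful inductive argument matching, at each stage, the Ueda obstruction class against the specific cohomology group controlling the extension of the $1$-form, and verifying that residues at intermediate orders cancel so that no pole of order greater than $k+1$ arises spuriously.
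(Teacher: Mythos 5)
Your overall strategy (Ueda-type induction to produce $\omega_0$, periods to compute holonomies, a dimension count for completeness) is the paper's, but two of the genuinely hard steps are asserted rather than carried out. The first concerns $\omega_\infty$ when there is no transversal fibration: you invoke ``a parallel inductive analysis'' detecting a ``secondary obstruction'' at the lowest multiple $p$ of $m$. No such induction is set up, and this is not how the form is obtained. In the paper, the Ueda induction naturally yields a \emph{two}-parameter family of closed forms with pole of order exactly $k+1$, because the locally constant representative $(b_{ij})\in H^1(C,\C)$ lifting the Ueda class is only determined up to the periods of a holomorphic $1$-form on $C$, and each lift produces (after the resonant step at $n=k$, which introduces the logarithmic term with residue $\lambda$) a closed form realizing the corresponding element of $\Hom(\lattice,\C)$. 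The form $\omega_\infty$ is then defined as a \emph{difference} of two such order-$(k+1)$ forms, spanning the kernel of $\closed\to H^1(C,\mc O_C)$, and the drop of its pole order to $p+1<k+1$ is proved by contradiction: if it still had a pole of order $k+1$, its holonomy would force $\utype(U,C)>k$. Without this mechanism you have neither a construction of $\omega_\infty$ nor any control on $p$.

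The second gap is the completeness claim. You assert that every regular formal foliation tangent to $C$ is defined by a closed form of pole order at most $k+1$, and that the space of such forms is two-dimensional; both are conclusions of an argument you omit, namely that the period map $\sigma:\closed\to\Hom(\lattice,\C)$ is \emph{injective} as well as surjective. Injectivity rests on (a) the classification of abelian subgroups of $\Difffor$, which shows each $\F\in\Folf$ is defined by a closed form unique up to scale unless it is a fibration, and (b) the fact that a closed form with vanishing periods defines a fibration with fiber $C$, which is impossible when $\utype(U,C)<\infty$ since any non-constant formal first integral contradicts finiteness of the Ueda type. Only then does $\dim\closed=2$ follow, the pole bound $k+1$ comes for free, and the inclusion $\Folf\subset\PP(\closed)$ forces every foliation into the pencil; as written, your dimension count begs the question. (A minor further point: the jets in (\ref{eq:HolonPencilUedak}) are not literally obtained by integrating $\omega_t$ along $1$ and $\tau$; for loops outside the kernel of the unitary representation the period is twisted, and one must pass through the comparison relation (\ref{eq:CompatibilityOrderk}) to identify the $(k+1)$-jet of the holonomy.)
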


\subsection{Formal classification}
The formal classification of neighborhoods $(U,C)$ turns out to be equivalent
to the formal classification of bifoliated neighborhoods $(U,C,\F_0,\F_\infty)$.
When $\F_\infty$ is a fibration transversal to $C$, then it is a suspension,
and the classification reduces to that of the holonomy representation of $\F_0$:
there is one formal invariant $\lambda\in\C$ in this case. 
When $\F_\infty$ is tangent to $C$, we are led to classify pairs of foliations $(\F,\G)$ having $C$ as a common leaf.
A first invariant is given by the contact order $\Tang(\F,\G)\in\N^*$ between the two foliations along $C$ (see section \ref{sec:BifBasicInv}).
For instance, in Theorem \ref{THM:PencilFol}, we have $\Tang(\F_t,\F_{t'})=k+1$ while $\Tang(\F_t,\F_\infty)=p+1$.
Given a transversal $(T,y)$ to $C$, we can consider the holonomy representations 
$$\rho_{\F},\rho_{\G}:\pi_1(C)\to\Difffor\ (\text{or}\ \Diff).$$
Then we prove (see the end of section \ref{S:BifClass})

\begin{THM}\label{ThmBifoliated} $\ $

 \noindent {\bf Classification.} Any formal/analytic bifoliated neighborhoods $(U,C,\F,\G)$ and $(\tilde U,C,\tilde{\F},\tilde{\G})$ 
with same contact order 
$\Tang(\F,\G)=\Tang(\tilde{\F},\tilde{\G})$  are for\-mal\-ly/analytically equivalent if, and only if, there 
exist formal/analytic 
diffeomorphisms  $\phi,\psi\in\Difffor/\Diff$ \text{such that} for all $\gamma\in\pi_1(C)$
\begin{equation}\label{eq:Bifolholclas}
\left\{\begin{matrix}\rho_{\F}(\gamma)\circ \phi=\phi\circ\rho_{\tilde{\F}}(\gamma)\\ \rho_{\G}(\gamma)\circ \psi=\psi\circ\rho_{\tilde{\G}}(\gamma)
\end{matrix}\right.
\mbox{and such that}\ \phi=\psi\ \text{mod}\ y^{k+2}.
\end{equation}
{\bf Realization.} 
Given two formal/analytic representations $\rho_1,\rho_2$ 
of the form  (\ref{eq:HolonPencilUedak}) or (\ref{eq:HolonPencilUedakappa})
for distinct $t_1,t_2\in\PP^1$, there is a unique formal/analytic bifoliated 
neighborhood $(U,C,\F_{t_1},\F_{t_2})$ having holonomy representations $\rho_{1},\rho_{2}$ on a given transversal.
\end{THM}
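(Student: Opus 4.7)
The plan is to view a bifoliated neighborhood as determined by its pair of holonomy representations together with a local compatibility forced by the contact order, prove the realization statement by a suspension plus inductive extension, and deduce the classification from it.

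Fix a base point $p_0\in C$ and a transversal $(T,y)$ at $p_0$. Spreading $T$ along $\F$-leaves trivializes a tubular neighborhood of $C$ as (universal cover) $\times(T,0)$ quotiented by $\rho_{\F}$, in which $\F=\{dy=0\}$ and $\G=\{dy-y^{k+1}\eta=0\}$ for a non-vanishing formal $1$-form $\eta$ on $C$: this is precisely how $\Tang(\F,\G)=k+1$ manifests locally. A direct computation with this normal form shows that projecting a short arc onto $T$ via $\F$-holonomy and via $\G$-holonomy yields two maps that agree modulo $y^{k+2}$. This is the geometric origin of the compatibility $\phi=\psi\ \mathrm{mod}\ y^{k+2}$.

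Necessity in the classification is then immediate: given an equivalence $\Phi:(U,C,\F,\G)\to(\tilde{U},C,\tilde{\F},\tilde{\G})$ fixing $C$, define $\phi,\psi$ as the projections of $\Phi(T)$ onto $\tilde{T}$ along the leaves of $\tilde{\F}$, respectively $\tilde{\G}$; they conjugate the corresponding holonomies because $\Phi$ preserves the foliations, and they agree modulo $y^{k+2}$ by the previous paragraph. For realization, given $\rho_1,\rho_2$ of the prescribed form, I first suspend $\rho_1$ over the universal cover $\tilde{C}$ to obtain $(U,C,\F_{t_1})$, then build $\F_{t_2}$ order by order on the infinitesimal neighborhoods $C(n)$. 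The successive obstructions sit in cohomology groups over $C$ involving tensor powers of $N_C$; the normalization of the jets of $\rho_1,\rho_2$ prescribed by \eqref{eq:HolonPencilUedak}/\eqref{eq:HolonPencilUedakappa} is designed so that at every step the cohomological obstruction is solved uniquely by the datum carried by the next jet of $\rho_2$, giving existence and uniqueness of $\F_{t_2}$. Sufficiency in the classification then follows, because both $(U,C,\F,\G)$ and $(\tilde{U},C,\tilde{\F},\tilde{\G})$ coincide, up to canonical equivalence, with the realizations of their own pairs of holonomies, and the hypothesis that these pairs are simultaneously conjugate under $(\phi,\psi)$ satisfying the $y^{k+2}$ compatibility translates into an equivalence of the two realizations.

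The main obstacle is the inductive realization: one must match, at each order, the cohomological obstructions on $C$ with the prescribed jets of $\rho_1,\rho_2$, and in the analytic category secure convergence of the resulting formal construction to an honest germ of neighborhood. Once the realization is established with its sharp uniqueness, both directions of the classification follow by comparing any given bifoliated germ with the universal model built from its own pair of holonomies.
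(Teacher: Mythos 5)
Your realization step has a genuine gap, and it is the load-bearing one. You propose to \emph{first} suspend $\rho_1$ to obtain a germ $(U,C,\F_{t_1})$ and \emph{then} construct $\F_{t_2}$ on its infinitesimal neighborhoods $C(n)$ with prescribed holonomy $\rho_2$. But the suspension of $\rho_1$ is a fixed germ of neighborhood carrying a transverse fibration, and on a fixed germ the set of tangent foliations is a pencil whose holonomies are all determined by $\rho_1$ (this is exactly Corollary \ref{cor:pencilfol}); for a generic admissible $\rho_2$ there is simply no foliation on that suspension realizing it. The underlying neighborhood must itself be built from the \emph{pair} $(\rho_1,\rho_2)$: in the paper's construction the gluing maps are $\phi_\gamma(x,y)=(x+\gamma+g(y),\dots)$ with a nontrivial correction $g(y)$ in the $x$-component, so the result is not the suspension of $\rho_1$. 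Even read charitably as a deformation of the gluing, your assertion that ``at every step the cohomological obstruction is solved uniquely by the datum carried by the next jet of $\rho_2$'' is precisely the hard point, and you do not prove it; nor do you address convergence in the analytic category, which you yourself flag as the main obstacle. Since your classification (sufficiency) is deduced from the realization's uniqueness, it inherits the gap.

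For comparison, the paper avoids all order-by-order cohomology. The key tool is a local normal form for a \emph{pair} of reduced equations $(f,g)$ of $C$ with $\mathrm{div}(df\wedge dg)=(k+1)[C]$ (Lemmas \ref{LemNormFormBifol} and \ref{LemNormFormBifolBis}): there is a \emph{unique} local change of coordinates fixing $C$ pointwise putting $(f,g)$ in the form $f=\tilde y$, $g=P(\tilde y)+u(\tilde x)\tilde y^{k+1}$. Uniqueness gives the classification (the local conjugations patch by analytic continuation once the holonomies are matched), and existence gives the realization: for each $\gamma$ one gets a unique local diffeomorphism $\phi_\gamma$ conjugating the translated model first integrals with $(\rho_1(\gamma)\circ f,\rho_2(\gamma)\circ g)$, these patch into a global $\phi_\gamma$, and the $\phi_\gamma$ define a group action on $\C_x\times(\C_y,0)$ whose quotient is the desired bifoliated germ. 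Because the lemma is an exact (analytic or formal) statement, there is no convergence issue to resolve. Your opening normalization ($\F=\{dy=0\}$, $\G=\{dy-y^{k+1}\eta=0\}$) and the origin of the $y^{k+2}$ compatibility are consistent with the paper, but without the uniqueness statement of the local lemma you have neither the global patching for the classification nor a working realization.
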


Applying Theorem \ref{ThmBifoliated} to the canonical pair $(\F_0,\F_\infty)$ of Theorem \ref{THM:PencilFol},
we get the formal classification (see Theorems \ref{thm:NormalFormNeigh0=p<k} and \ref{thm:NormalFormNeigh0<p<k}).

\begin{THM}\label{TH:FORMAL_CLASS}
The set of formal equivalent classes of germs $(U,C)$ with given normal bundle 
$N_C$ of order $m$ and Ueda type $k=mk'$ is in one-to-one correspondance with 
$$(\lambda,\underbrace{(\lambda_0,\lambda_1,\ldots,\lambda_{k'-1})}_{\Lambda})\in\C\times\C^{k'}$$
up to the action of $k'^{\text{th}}$ roots of unity defined by $(\lambda_i)\stackrel{\mu}{\mapsto} (\mu^{-i}\lambda_i)$.
\end{THM}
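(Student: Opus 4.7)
The strategy is to combine Theorem~\ref{THM:PencilFol} and Theorem~\ref{ThmBifoliated} and then perform an explicit normal-form analysis for a pair of commuting formal diffeomorphisms of $(\C,0)$. Theorem~\ref{THM:PencilFol} attaches to each germ $(U,C)$ a canonical bifoliation $(\F_0,\F_\infty)$, with $\F_0$ singled out inside the pencil by torsion holonomy along $1\in\lattice$; any formal equivalence of germs therefore carries canonical pair to canonical pair, so the formal classification of $(U,C)$ agrees with that of the bifoliated germ $(U,C,\F_0,\F_\infty)$. Theorem~\ref{ThmBifoliated} reformulates the latter as the classification of pairs of formal holonomies $(\rho_{\F_0},\rho_{\F_\infty})$ of the shapes~\eqref{eq:HolonPencilUedak} at $t=0$ and~\eqref{eq:HolonPencilUedakappa}, up to simultaneous conjugation by pairs $(\phi,\psi)\in\Difffor^2$ with $\phi\equiv\psi\pmod{y^{k+2}}$.

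Since $\rho_{\F_0}(1)$ is torsion, I formally linearize it to $z\mapsto a_1z$; after a possible change of basis of $\pi_1(C)$, I may assume $\ord(a_1)=m$, so commutativity puts $\rho_{\F_0}(\tau)$ in the centralizer of $z\mapsto a_1z$, supported on exponents $\equiv 1\pmod m$ with leading nonlinear term $a_\tau z^{k+1}$. The infinitesimal action on $\rho_{\F_0}(\tau)$ of a centralizer conjugation $\phi=\exp(\beta z^{jm+1}\partial_z)$ is captured to leading order by the Lie bracket $[\beta z^{jm+1}\partial_z,z^{k+1}\partial_z]=(k-jm)\beta z^{jm+k+1}\partial_z$, so the coefficient of $\phi$ at degree $jm+1$ acts on the coefficient of $\rho_{\F_0}(\tau)$ at degree $jm+k+1$. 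Because the coefficients of $\phi$ at orders $>k+1$ are unconstrained, iterating this relation kills every coefficient of $\rho_{\F_0}(\tau)$ at degree $>2k+1$; the $k'$ coefficients at the intermediate degrees $k+m+1,k+2m+1,\ldots,2k+1$ are reachable only through the $(k+1)$-jet of $\phi$, which is locked to $\psi$ by the bifoliation constraint, and so survive as the moduli $(\lambda_0,\ldots,\lambda_{k'-1})$. A parallel reduction of $\rho_{\F_\infty}$, exploiting the still-free tail of $\psi$ beyond order $k+1$, leaves exactly one additional modulus $\lambda\in\C$: the suspension invariant when $\F_\infty$ is transversal, and an Écalle-type residue in the tangent case.

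The only remaining freedom is a linear conjugation $z\mapsto\mu z$ compatible with the normal form. Preserving the coefficient $a_\tau$ of $z^{k+1}$ requires $\mu^k=1$, and any $\nu$ with $\nu^m=1$ acts trivially on each $\lambda_i$ (the weight on the coefficient at degree $k+1+im$ is $\nu^{k+im}=1$); the effective residual group is therefore $\{\mu:\mu^k=1\}/\{\nu:\nu^m=1\}\simeq\Z/k'$, and a direct weight calculation shows it acts in the announced way $(\lambda_i)\mapsto(\mu^{-i}\lambda_i)$ with $\lambda$ fixed. Surjectivity of the invariant map is realized by the second part of Theorem~\ref{ThmBifoliated}, which constructs a bifoliated germ for every admissible $(\lambda,\Lambda)$; injectivity follows from uniqueness of the normal form modulo the $\Z/k'$ action.

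The main obstacle is the normal-form analysis of the second paragraph. The first-order Lie-bracket computation is only the initial linearization of the cohomological equation governing conjugation, and one must iterate it to all higher orders to confirm that the free tail of $\phi$ kills exactly the desired coefficients; at the same time, the reduction of $\rho_{\F_\infty}$ by the free tail of $\psi$ must be coordinated with the constrained $(k+1)$-jet so that the joint count of surviving parameters is exactly $k'+1$ and not larger.
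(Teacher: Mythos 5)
Your global architecture --- reduce to the canonical bifoliated germ $(U,C,\F_0,\F_\infty)$ via Theorem \ref{THM:PencilFol}, classify the pair of holonomies modulo constrained pairs $(\phi,\psi)$ via Theorem \ref{ThmBifoliated}, realize by the second part of that theorem, and quotient by a residual group of roots of unity --- is the paper's. But the normal-form analysis carrying the actual content locates the moduli in the wrong place. By Theorems \ref{TH:formal} and \ref{TH:formalG}, $\rho_{\F_0}(\tau)$ is conjugate to the \emph{one}-parameter model $a_\tau\exp(v_{k,\lambda})$, and the conjugacy can be taken inside the centralizer of $z\mapsto a_1z$: in your own bracket computation $[\beta z^{jm+1}\partial_z,z^{k+1}\partial_z]=(k-jm)\beta z^{jm+k+1}\partial_z$ the coefficient vanishes only for $jm=k$, so the unique unreachable coefficient is the resonant one at degree $2k+1$, namely $\lambda$ (compare (\ref{eq:formodexpvklambda})); the coefficients at degrees $k+m+1,\ldots,2k-m+1$ \emph{can} all be killed. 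Your reason for retaining them --- that the $(k+1)$-jet of $\phi$ is ``locked to $\psi$'' --- misreads the constraint: $\phi\equiv\psi$ is symmetric, so the shared jet is free provided $\psi$ is chosen with the same jet; moreover, for the pair $(\F_0,\F_\infty)$ the matching order in Theorem \ref{ThmBifoliated} is governed by the contact order $\Tang(\F_0,\F_\infty)=p+1<k+1$, hence is $y^{p+2}$ and not $y^{k+2}$. Consequently $\rho_{\F_0}$ contributes exactly one parameter, not $k'$.

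The $k'$ parameters $\Lambda$ live instead in $\rho_{\F_\infty}$, which you claim reduces to ``exactly one additional modulus''. Since $\rho_{\F_\infty}(\gamma)=a_\gamma\bigl(y+c\sigma(\gamma)y^{p+1}+\cdots\bigr)$ has its first nonlinear term at degree $p+1\le k$ while the residual $\psi$'s are of the form $ay+o(y^{p+1})$, the coefficients of the dual closed $1$-form at pole orders $p+1,p+1-m,\ldots,1$ cannot be removed: this is the polynomial $P=\sum_{i=0}^{p'}\lambda_iz^i$ of Theorem \ref{thm:NormalFormNeigh0<p<k}, giving $p'+1$ moduli on the stratum of tangency $p=mp'$. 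You also omit the stratification by $p\in\{-1,0,m,\ldots,(k'-1)m\}$ (transversal fibration, logarithmic case, higher tangency), which is how the cases \ref{SS:transverse}, \ref{thm:NormalFormNeigh0=p<k} and \ref{thm:NormalFormNeigh0<p<k} assemble $\C\times\C^{k'}$ as a union of strata $\{\lambda_{p'}\neq0,\ \lambda_{p'+1}=\cdots=\lambda_{k'-1}=0\}$. Finally, ``after a possible change of basis of $\pi_1(C)$'' is not an allowed move: equivalences induce the identity on $C$ (diagram (\ref{eq:equivNeighbId})), so the cycle $1\in\lattice$ cannot be replaced; the paper only picks an auxiliary $\gamma$ with $a_\gamma$ of exact order $m$ to run the centralizer argument. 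Your total count $k'+1$ and the final $\Z/k'$ action come out right, but by an accounting that does not withstand scrutiny.
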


Moreover, for each formal class $(\lambda,\Lambda)$, a representative $(U_{\lambda,\Lambda},C)$ is given by the quotient 
of the germ of neighborhood $(\tilde U,\tilde C)$ of $\tilde C:=\{y=0\}$ in $\tilde U:=\C_x\times\C_y$ by the action 
of $\lattice$ generated by:
\begin{equation}\label{eq:NormalFormNeighGroupGeral}
\left\{\begin{array}{ccc} \phi_1(x,y)&=& (x+1\ ,\ a_1 y) \\ 
\phi_\tau(x,y)&=&(x+\tau+g_{k,\lambda,\Lambda}(y)\ ,\ a_\tau \varphi_{k,\lambda}(y))\end{array}\right. 
\end{equation}
$$\text{where}\ \ \ \varphi_{k,\lambda}=\exp\left(\frac{y^{k+1}}{1+\lambda y^k}\partial_y\right),\ \ \ g_{\lambda,\Lambda}=\int_0^y a_\tau\varphi_{k,\lambda}^*\omega_\Lambda-\omega_\Lambda$$
$$\text{with}\ \ \ \omega_\Lambda:=P(\frac{1}{y^m})\frac{dy}{y},\ \ \ P(z):=\sum_{i=0}^{k'-1}\lambda_i z^i.$$
The pencil $\omega_t=\omega_0+t\omega_\infty$ of closed $1$-forms of Theorem \ref{THM:PencilFol} is generated by
\begin{equation}
\omega_0=\frac{dy}{y^{k+1}}+\lambda\frac{dy}{y}\ \ \ \text{and}\ \ \ \omega_\infty=dx-\omega_\Lambda.
\end{equation}
The holonomy representation of $\F_0$ computed on the transversal $\{x=0\}$ is given by
\begin{equation}\label{eq:NormalFormNeighHolF0}
\rho_{\F_0}\ :\ 
\left\{\begin{matrix}
1\mapsto a_1 y\hfill\\ \tau\mapsto a_\tau \varphi_{k,\lambda}(y)
\end{matrix}\right.
\end{equation}
When $\Lambda=0$, the germ $(U_{\lambda,0},C)$ is just the suspension of this representation 
and $\F_\infty$ is the fibration transversal to $C$. When $\Lambda\not=0$, $C$ is a common leaf of $\F_\infty$ and $\F_0$ and the tangency order of these two foliations along $C$ is $p=m\deg(P)$.  Moreover, the holonomy representation of $\F_\infty$ is given by
\begin{equation}\label{eq:NormalFormNeighHolFinfty}
\rho_{\F_\infty}\ :\ 
\left\{\begin{matrix}
1\mapsto a_1\exp(v_\Lambda)\hfill\\ \tau\mapsto a_\tau \exp(\tau v_\Lambda)
\end{matrix}\right.\ \ \ \text{where}\ v_\Lambda=\frac{y}{P(\frac{1}{y^m})}\partial_y.
\end{equation}

In the appendix,
we also consider the action of automorphisms of the elliptic curve $C$ on the normal forms $(U_{\lambda,\Lambda})$.

\subsection{About analytic classification} 
For a general neighborhood $(U,C)$, with $N_C$ torsion and $\utype(U,C)=k<\infty$,
foliations $\F_t$ of Theorem \ref{THM:PencilFol} are only formal. Moreover, Mishustin recently gave in \cite{Mishustin}
an example where all $\F_t$ are divergent. 
Using the first part of Theorem \ref{ThmBifoliated}, we can prove (section \ref{S:criteriaconv})

\begin{THM}\label{thm:3convergent} Let $(U,C)$ be an analytic neighborhood with $N_C$ torsion and $\utype(U,C)=k<\infty$.
Assume
\begin{itemize}
\item three elements $\F_{t_1},\F_{t_2},\F_{t_3}$ of the pencil are convergent,
\item or two elements $\F_{t_1},\F_{t_2}$ of the pencil are convergent with $\frac{t_i}{1+t_i\tau}\not\in\Q$ for $i=1,2$;
\end{itemize}
then the full pencil $(\F_t)$ is convergent and $(U,C)$ is analytically equivalent to its formal normal form.
\end{THM}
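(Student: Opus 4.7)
The plan is to apply the classification direction of Theorem \ref{ThmBifoliated} to the bifoliated neighborhood $(U,C,\F_{t_1},\F_{t_2})$ formed by two convergent elements of the pencil. Since $\F_{t_1},\F_{t_2}$ are analytic, this is an analytic bifoliated neighborhood and the associated holonomy representations $\rho_{\F_{t_i}}:\pi_1(C)\to\Diff$ take values in the true diffeomorphism group. Theorem \ref{TH:FORMAL_CLASS} furnishes a formal bifoliated equivalence between $(U,C)$ and the explicit formal normal form $(U_{\lambda,\Lambda},C)$, so formal conjugators $\hat\phi,\hat\psi\in\Difffor$ intertwining the holonomies of $\F_{t_1},\F_{t_2}$ with their counterparts in the normal form, and satisfying $\hat\phi=\hat\psi\bmod y^{k+2}$, exist automatically. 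The classification criterion then reduces the theorem to upgrading $(\hat\phi,\hat\psi)$ to an analytic pair $(\phi,\psi)\in\Diff$; once that is achieved, transporting the analytic pencil from $(U_{\lambda,\Lambda},C)$ back via the resulting bifoliated equivalence yields both the convergence of the full pencil $(\F_t)$ and the analytic equivalence $(U,C)\simeq(U_{\lambda,\Lambda},C)$.

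The key step is therefore to show that two formally conjugate analytic representations of $\pi_1(C)\simeq\Z^2$ into $\Diff$, of the resonant pencil type (\ref{eq:HolonPencilUedak}), are analytically conjugate. Each representation fixes $0$ with torsion linear parts $(a_1,a_\tau)$ and first non-trivial perturbation at order $k+1$, so the formal-versus-analytic discrepancy is governed by Ecalle--Voronin-type invariants for resonant commuting diffeomorphisms. The diophantine hypothesis $\frac{t_i}{1+t_i\tau}\not\in\Q$ ensures that no non-trivial $(n,l)\in\Z^2$ with $a_1^na_\tau^l=1$ produces a holonomy element $\rho_{\F_{t_i}}(n\cdot 1+l\cdot\tau)$ whose leading $z^{k+1}$-coefficient $nt_i+l(1+t_i\tau)$ vanishes; this rules out the extra resonances that would create analytic obstructions absent from the formal picture. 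In the three-convergent case, one avoids invoking any diophantine hypothesis on individual $t_i$ through an over-determination argument: three analytic elements inside a two-dimensional pencil structure force enough rigidity on $(\hat\phi,\hat\psi)$ that any residual analytic invariant automatically vanishes.

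The main obstacle is this formal-to-analytic upgrade of $(\hat\phi,\hat\psi)$: concretely, one must show that the formal conjugation series between the analytic holonomies on $(U,C)$ and those on the normal form actually converges under the stated non-degeneracy. Both cases of the theorem then reduce, via Theorem \ref{ThmBifoliated}, to this analytic upgrade of the conjugator pair, after which the convergence of the entire pencil and the analytic equivalence with the formal normal form follow by transporting structure from the model.
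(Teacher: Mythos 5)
Your overall strategy --- reduce to the classification part of Theorem \ref{ThmBifoliated} for the pair $(\F_{t_1},\F_{t_2})$ and then upgrade the formal conjugators $(\hat\phi,\hat\psi)$ to analytic ones --- is indeed how the paper concludes, but you leave the decisive step as an acknowledged ``obstacle'' rather than proving it. The missing ingredient is the \'Ecalle--Liverpool rigidity theorem (Theorem \ref{thm:EcalleLiverpool} of the paper): a \emph{resonant} subgroup of $\Diff$ which is \emph{not virtually cyclic} is automatically \emph{analytically} conjugate to a subgroup of the model $\mb E_{k,\lambda}$. The hypothesis $\frac{t_i}{1+t_i\tau}\notin\Q$ is precisely the condition that the image of $\rho_{\F_{t_i}}$, whose ``times'' form the subgroup $t_i\Z+(1+t_i\tau)\Z$ of $(\C,+)$ by (\ref{eq:HolonPencilUedak}), is not virtually cyclic; so the theorem applies directly and there is no conjugating series whose convergence must be established by hand. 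Your talk of ``ruling out extra resonances'' identifies the right condition but supplies no mechanism for convergence. Once each $\rho_{\F_{t_i}}$ analytically preserves the convergent form $\omega_{k,\lambda}$, the closed $1$-forms $\omega_{t_1},\omega_{t_2}$ of Corollary \ref{cor:folformalclosedform} converge, the whole pencil is spanned by them, and the classification part of Theorems \ref{ThmBifoliatedNeighborhood} and \ref{ThmBifoliatedNeighborhood_k>0} finishes as you say.

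The second gap is the three-foliation case. Your ``over-determination argument'' --- that three analytic members of the pencil force any residual analytic invariant to vanish --- is asserted, not proved, and it is unclear how it would run: each of the three $\F_{t_i}$ could individually have virtually cyclic holonomy, hence genuine \'Ecalle--Voronin moduli, and nothing in your sketch cancels them. The paper's device is concrete: from three convergent foliations one manufactures, for every $c\in\C\setminus\{0,1,\infty\}$, a foliation $\F$ on $U\setminus C$ by requiring $\mbox{cross-ratio}(T_p\F_{t_1},T_p\F_{t_2},T_p\F_{t_3},T_p\F)=c$, and the local computation (\ref{eq:InvCrossRat}) shows that $\F$ extends holomorphically across $C$ and belongs to $\Fol$, hence to the pencil. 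This yields uncountably many convergent members of the pencil, among which two necessarily satisfy $\frac{t}{1+t\tau}\notin\Q$, reducing the first bullet to the second. Without this (or an equivalent) construction, the three-convergent-foliation case is not established.
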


On the other hand, using the second part of Theorem \ref{ThmBifoliated}, we can prove (section \ref{S:criteriaconv})

\begin{THM}\label{thm:EmbedEcalleVoronin} 
For each formal normal form $(U_0,C)$ and each pair 
$t_1,t_2\in\PP^1$ with $\frac{t_1}{1+t_1\tau}\in\Q$, then the set of analytic equivalence classes of analytic
neighborhoods $(U,C)$ formally equivalent to $(U_0,C)$ with $\F_{t_1},\F_{t_2}$ convergent 
is infinite dimensional.
\end{THM}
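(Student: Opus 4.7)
The plan is to apply the Realization part of Theorem \ref{ThmBifoliated} to parametrize the relevant neighborhoods by analytic holonomy pairs, and then to exploit the rational resonance hypothesis on $t_1$ to reduce the analytic classification of $\rho_1=\rho_{\F_{t_1}}$ to that of a single tangent-to-identity germ, to which Ecalle-Voronin theory supplies infinite-dimensional moduli. By Realization, analytic neighborhoods $(U,C)$ with $\F_{t_1},\F_{t_2}$ convergent are in bijection with analytic pairs $(\rho_1,\rho_2)$ of holonomies of the form (\ref{eq:HolonPencilUedak})/(\ref{eq:HolonPencilUedakappa}), and fixing the formal class of $(U_0,C)$ fixes the formal conjugacy class of the pair. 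By the Classification part, two such neighborhoods are analytically equivalent only if some $\phi\in\Diff$ conjugates $\rho_1$ to $\tilde\rho_1$, regardless of the choice of $\psi$ for $\rho_2$. It therefore suffices to exhibit an infinite-dimensional family of analytic representations $\rho_1$ of the prescribed form (\ref{eq:HolonPencilUedak}), all formally equivalent to a fixed normal form $\rho_1^0$ and pairwise non-conjugate in $\Diff$; paired with any fixed analytic $\rho_2$, each such $\rho_1$ then yields a distinct analytic class of neighborhood.

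Write $\frac{t_1}{1+t_1\tau}=p/q$ with $p,q\in\Z$. After taking a common power $N$ to absorb the torsion factors $a_1,a_\tau$, the formal model gives the equality $\rho_1^0(1)^{Nq}=\rho_1^0(\tau)^{Np}$, both being time-$Nqt_1$ flows of the canonical vector field $X=\frac{y^{k+1}}{1+\lambda y^k}\partial_y$. Two analytic germs with identical Taylor series are equal, so this relation persists for any analytic $\rho_1$ of the prescribed form: thus $\rho_1$ is essentially determined by the single analytic tangent-to-identity germ $\varphi:=\rho_1(1)^{Nq}$, of formal type $y\mapsto y+y^{k+1}+\ldots$, together with analytic $q$-th and $p$-th roots of $\varphi$ in its centralizer. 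By the Ecalle-Voronin theorem, the analytic conjugacy class of $\varphi$ is captured by an infinite-dimensional modulus --- the $2k$ horn maps at infinity of $\PP^1$ --- and each modulus is realized explicitly by a P\'erez-Marco-style construction.

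The main obstacle is the simultaneous analytic extraction of these $p$-th and $q$-th roots of $\varphi$ in its centralizer, with the prescribed formal leading terms. Absent resonance such roots are only formal; the rational assumption makes them genuinely analytic via a standard Fatou-coordinate argument inside the centralizer. Reinserting the torsion factors $a_1,a_\tau$ and lifting to $\pi_1(C)$ then produces an analytic representation $\rho_1$ of the required form (\ref{eq:HolonPencilUedak}), formally equivalent to $\rho_1^0$ and with arbitrary Ecalle-Voronin modulus. Varying the modulus yields the desired infinite-dimensional family of analytic equivalence classes of neighborhoods formally equivalent to $(U_0,C)$.
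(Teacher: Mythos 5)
Your overall strategy is the paper's: fix the holonomy of $\F_{t_2}$ at its convergent normal form, realize pairs via the realization part of the bifoliated classification, and distinguish the resulting neighborhoods by the non-conjugacy of the $\F_{t_1}$-holonomies, which form an infinite-dimensional family because the rationality of $\frac{t_1}{1+t_1\tau}$ makes that holonomy group virtually cyclic and resonant, hence governed by \'Ecalle--Voronin-type moduli. The paper disposes of this last point by citing the known description of the moduli space of \emph{virtually cyclic} representations into $\mb E_{k,\lambda}$, whereas you try to rebuild it by hand, and that is where the argument breaks.

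The gap is the root-extraction step. You set $\varphi:=\rho_1(1)^{Nq}=\rho_1(\tau)^{Np}$, propose to give $\varphi$ an \emph{arbitrary} \'Ecalle--Voronin modulus, and then recover $\rho_1(1)$ and $\rho_1(\tau)$ as analytic $Nq$-th and $Np$-th roots of $\varphi$ in its centralizer ``via a standard Fatou-coordinate argument''. No such argument exists: for a parabolic germ $\varphi(z)=z+z^{k+1}+\cdots$ with generic horn maps, the centralizer in $\Diff$ reduces to $\langle\varphi\rangle$ times a finite rotation group, so $\varphi$ admits \emph{no} analytic roots at all. An analytic $n$-th root exists precisely when the $2k$ horn maps are invariant under the corresponding translation by $1/n$ in the \'Ecalle--Voronin cylinders, a nontrivial constraint violated by a generic modulus. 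Concretely, if you first build $\rho_1(1)$ with generic modulus, its centralizer forces $\rho_1(\tau)=\rho_1(1)^{j}\cdot(\text{torsion})$ for some integer $j$, and the relation $\rho_1(\tau)^{Np}=\rho_1(1)^{Nq}$ then requires $p\mid q$ --- impossible in general. The correct parametrization is by a generator $g_0$ of the infinite cyclic part of the image (the image of $\rho_1$ is a quotient of $\Z\oplus\Z/d$, not generated by a deep power), whose modulus need only be invariant under the finite torsion part; that symmetric subspace of moduli is still infinite dimensional, which is the content of the reference \cite[\S 2.8 and 2.10]{Frankpseudo} the paper invokes. So the theorem survives, but your construction as written produces no representations beyond the formal normal form itself. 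You also gloss over the compatibility condition (\ref{eq:CompatibilityOrderk}) needed before the realization theorem applies to the pair $(\rho_1,\rho_2)$; the paper handles this by a preliminary conjugation by a polynomial diffeomorphism adjusting the $(k+1)$-jet of $\rho_1$.
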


In fact, we prove that each of these deformation spaces contains \'Ecalle-Voronin moduli space.
In the non torsion case, we can also use a construction similar to the second part of Theorem \ref{ThmBifoliated}
to construct many examples with only two convergent foliations $\F_0,\F_t$, $t\in\C^*$, but with divergent transversal fibration. In fact, we realize Yoccoz non linearizable dynamics (see \cite{PerezMarco}) as holonomy
for $\F_0$. Again, there are infinitely many parameters. So far, all non linearizable examples
of neighborhoods, in the non torsion case, were obtained by suspension, i.e. with an analytic 
fibration transversal to $C$.

\subsection{Concluding remarks when $U$ is projective}
We do not know any projective exemple with $N_C$ non torsion and $(U,C)$ non linearizable.
In \cite{Paulo}, Sad shows that the blow-up $(U,C)\to(\PP^2,C_0)$ at $9$ points on a smooth cubic $C_0$ 
admits a global foliation having $C$ as a regular leaf only in the torsion case $N_C$, which is a
fibration in this case. Therefore, the local analytic foliation that exists in the very generic non torsion case
(when $N_C$ satisfies diophantine equation (\ref{eq:diophantine})) does not extend (even as a singular foliation)
on the whole of $U$. In the torsion case, Neeman proved in \cite[Theorem 6.12]{Neeman} that any smooth projective
$(U,C)$ with $\utype(U,C)<\infty$ is a blow-up (outside of the elliptic curve) of a unique model $(U_0,C)$
(corresponding to $m=k=1$ and $\lambda=\Lambda=0$ in Theorem \ref{TH:FORMAL_CLASS}), namely $U_0=\PP(E)$ where $E$ is the unique non trivial extension $0\to \OO_C\to E\to \OO_C\to 0$ and $C$ is regarded as a section via the embedding $\OO_C\to E$, with $N_C=\OO_C$.\\
Brunella proved in \cite[Chapter 9, Corollary 2]{Brunella} that any projective $(U,C)$ with a global
(possibly singular) foliation having $C$ as a compact leaf is, up to ramified coverings and birational maps,
either an elliptic fibration with $C$ as a (possibly multiple) fiber, or a ruled surface over $C$ admitting
 a section. In every cases, the germ $(U,C)$ is analytically linearizable, except if $(U,C)$ fits into 
 Neeman's above examples. In \cite{leaves}, the authors investigate similar global questions for neighborhoods of higher genus 
curves in surfaces, or higher dimensional hypersurfaces. In the local setting, at the neighborhood of a curve $C$
of arbitray genus with flat normal bundle in a complex surface, there still exists many formal foliations having $C$
as a regular leaf (see \cite[Theorem A]{leaves}). Also, a bifoliated classification can be done, at least in 
genus $2$ (see \cite{Olivier,TheseOlivier}).

\section{Foliations, holonomy and closed $1$-forms}\label{S:Abeliansubgroups}

The principal tool that we will use to classify germs of neighborhoods $(U,C)$ will be 
the existence of formal foliations that we will prove in the next section.
Let 
$$\Difffor=\{\sum_{n>0}a_nz^n\ ;\ a_1\not=0\}$$ 
denotes the group of formal diffeomorphisms fixing $0$
and $\Diff\subset\Difffor$ the subgroup of holomorphic germs (i.e. convergent ones).

\subsection{Foliations and holonomy}\label{sec:folhol}
A formal regular foliation $\F$ on $(U,C)$ tangent to $C$ (i.e. $C$ is a leaf)
is defined by a covering $(U_i)$ together with formal submersions $f_i:U_i\to\C$
with $f_i(C\cap U_i)=0$ satisfying on each non empty intersection $U_i\cap U_j$ 
\begin{equation}\label{eq:submdefolf}
f_i=\varphi_{ij}\circ f_j\ \ \ \text{for some}\ \varphi_{ij}\in\Difffor.
\end{equation}
We denote by $\Folf$ the set of such foliations, and by $\Fol$ those ones that can be defined 
with convergent $f_i$'s.

An element $\F\in\Folf$ has a holonomy representation 
\begin{equation}\label{eq:holonomyrepgeral}
\rho_{\F}\ :\ \pi_1(C)\to\Difffor.
\end{equation}
Precisely, given a loop $\gamma:[0,1]\to C$ based in $p\in C$, we can cover $\gamma$ by a finite sequence 
of these open sets, say $U_0,U_1,\ldots,U_n,U_0$ such that $\gamma$ intersect successively these open sets
in this order. Then analytic continuation of $f_0$ along $\gamma$ is obtained by
\begin{equation}\label{eq:defholfor}
f_0=\varphi_{01}\circ f_1=\varphi_{01}\circ\varphi_{12}\circ  f_2=\cdots=\underbrace{\varphi_{01}\circ\varphi_{12}\circ\cdots\circ\varphi_{n0}}_{\varphi^\gamma}\circ f_0
\end{equation}
and we define the morphism (\ref{eq:holonomyrepgeral}) by $[\gamma]\mapsto\varphi^\gamma$. 
Changing $f=f_0$ by another first integral, say $\phi\circ f$ with $\phi\in\Difffor$, 
will conjugate $\rho_{\F}$ by $\phi$:
$$(\phi\circ f)^\gamma=\phi\circ f^\gamma=\phi\circ\varphi^\gamma\circ f
=(\phi\circ\varphi^\gamma\circ\phi^{-1})\circ(\phi\circ f)$$
But the class of the representation $\rho_{\F}$ up to conjugacy in $\Difffor$ does not depend
on the choice of $f_i$'s, of $U_i$'s and even of the base point.
If $\F$ is holomorphic (i.e. convergent), then $\rho_{\F}$ takes values in the group
$\Diff=\{\sum_{n>0}a_nz^n\in\C\{z\}\ ;\ a_1\not=0\}$ of germs of holomorphic diffeomorphisms
fixing $0$. The fundamental group $\pi_1(C)\simeq \lattice$ is abelian and the classification of 
abelian subgroups of $\Difffor$ is well-known.

\subsection{Abelian subgroups of $\Difffor$}\label{SSS:normalforms}
We give here an enumeration of the main properties of such subgroups (at least those useful in the sequel) for which we refer to \cite{Frankpseudo} and references therein.
Let us start by recalling that any holomorphic vector field $v=f(z)\partial_z$ fixing $0$, $f(0)=0$,
is holomorphically conjugated to one and only one of the following vector fields:
\begin{equation}\label{eq:normformvectfield}
v_\alpha=\alpha z\partial_z\ \ \ \text{or}\ \ \ v_{k,\lambda}=\frac{z^{k+1}}{1+ {\lambda} {z^k}} \partial z,
\end{equation}
where $k\in\N^*$ and $\alpha,\lambda\in\C$. This comes form the fact that the dual meromorphic $1$-form defined by $\omega(v)\equiv 1$ is characterized up to change of coordinate by its order of pole, and its residue: it is therefore conjugated to only one model
\begin{equation}\label{eq:normformoneform}
\omega_\alpha=\alpha^{-1} \frac{dz}{z}\ \ \ \text{or}\ \ \ \omega_{k,\lambda}=\frac{dz}{z^{k+1}}+\lambda \frac{dz}{z}.
\end{equation}
We can define the formal flow of such a vector field as 
$$\exp(tv)=\sum_{n\ge0}\frac{t^n}{n!}\underbrace{v\cdot(\cdots (v}_{n\ \text{times}}\cdot z)\cdots)$$ 
where $v$ acts as a differential operator: $v\cdot g=f\partial_z\cdot g=f\cdot g'$.
For instance, we have $\exp(t v_\alpha)=\exp(t\alpha)z$ and
\begin{equation}\label{eq:formodexpvklambda}
\exp(tv_{k,\lambda})=z+tz^{k+1}+\left(\frac{k+1}{2}t^2-\lambda t\right)z^{2k+1}+o(z^{2k+1}).
\end{equation}
The description of formal diffeomorphisms up to conjugacy is similar (see \cite[ \S 1.3]{Frankpseudo}):

\begin{thm}\label{TH:formal}
Any $f\in \Difffor$ is conjugated to exactly one of the following models:
\begin{enumerate}
\item $f_0 (z)=az$ where $a\in{\C}^*$.
\item $f_0(z)=a \cdot \exp\left( v_{k,\lambda}\right)$ with $a^k =1$.
\end{enumerate}
\end{thm}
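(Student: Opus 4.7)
The plan is to follow the classical Poincar\'e--Dulac normalization scheme, with the dichotomy driven by whether the multiplier $a := f'(0)$ is a root of unity. In the non-resonant case, one seeks a formal conjugation $\phi(z) = z + \sum_{n \geq 2} b_n z^n$ solving $\phi \circ f = f_0 \circ \phi$ with $f_0(z) = az$. Identifying coefficients at each order $n \geq 2$ yields a triangular recursion of the form $(a^n - a)\, b_n = P_n(a_2,\ldots,a_{n-1},b_2,\ldots,b_{n-1})$ for universal polynomials $P_n$; since $a^n \neq a$ for every $n \geq 2$, all $b_n$ are uniquely determined and $f$ is formally linearizable, falling into model (1).

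Suppose now $a$ is a root of unity of exact order $\ell \geq 1$. The same recursion remains solvable for every $n \not\equiv 1 \pmod{\ell}$, so by induction one kills all non-resonant monomials and reduces $f$ to a prepared form $f(z) = a\bigl(z + \sum_{j \geq 1} \alpha_{j\ell+1} z^{j\ell+1}\bigr)$ involving only resonant terms. Consider the $\ell$-th iterate $g := f^{\ell}$; a direct calculation in the prepared form gives $g(z) = z + \ell\, \alpha_{j_0 \ell + 1}\, z^{j_0 \ell + 1} + O(z^{(j_0+1)\ell+1})$, where $j_0$ is the smallest index with $\alpha_{j_0\ell+1} \neq 0$ (or $g = \mathrm{id}$ if all $\alpha_j$ vanish). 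If $g = \mathrm{id}$, the Bochner-type average $\phi = \frac{1}{\ell}\sum_{j=0}^{\ell-1} a^{-j} f^j$ conjugates $f$ to $z \mapsto a z$, again in model (1). Otherwise, setting $k := j_0 \ell$, we have $g(z) = z + c z^{k+1} + O(z^{k+2})$ with $c \neq 0$ and $\ell \mid k$, hence $a^k = 1$.

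In this decisive case I would invoke the formal theory of tangent-to-identity diffeomorphisms: $g$ admits a unique formal infinitesimal generator $u = (c z^{k+1} + \ldots)\partial_z$ with $g = \exp(u)$, and the dual meromorphic $1$-form $\omega$ defined by $\omega(u) \equiv 1$ is formally conjugate to exactly one model $\omega_{k,\mu}$ of (\ref{eq:normformoneform}); the pole order $k+1$ and residue $\mu$ are complete invariants. A centralizer argument then shows that any formal diffeomorphism commuting with $g = \exp(u)$ is of the form $b\,\exp(w)$ with $b \in \C^*$ satisfying $b^k = 1$ and $w \in \C \cdot u$; applied to $f$ this forces $b = a$ and yields $f = a \exp(w)$, and after rescaling the time parameter one obtains $f$ formally conjugate to $a \exp(v_{k,\lambda})$ for a unique $\lambda \in \C$, which is model (2).

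Uniqueness across both models then follows from invariance of $a$, of the tangency order of $f^{\ell}$ with the identity (determining $k$), and of the residue (determining $\lambda$); the two models are disjoint since in (2) the factor $\exp(v_{k,\lambda})$ is never formally trivial. The step I expect to require the most care is the centralizer assertion: that any formal diffeomorphism commuting with a nontrivial tangent-to-identity $g$ lies in the formal flow of its unique infinitesimal generator up to a root of unity. This is the formal avatar of \'Ecalle's embedding of a unipotent formal map into its flow, and it is precisely what elevates $(k,\lambda)$ to complete invariants in case (2).
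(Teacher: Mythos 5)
Your argument is correct and is essentially the standard proof of this statement, which the paper does not reprove but delegates to \cite[\S 1.3]{Frankpseudo}: Poincar\'e--Dulac elimination of non-resonant terms, reduction to the iterate $f^{\ell}$ tangent to the identity, embedding of that iterate into a formal flow, and classification of the dual $1$-form by its pole order $k+1$ and residue $\lambda$. The centralizer step you rightly flag as the delicate point is exactly the content of Theorem \ref{TH:formalG} (the centralizer of a non-torsion element of $\mb E_{k,\lambda}$ is $\mb E_{k,\lambda}$ itself), which the paper likewise quotes from \cite[\S 1.4]{Frankpseudo}, so your reliance on it introduces no circularity.
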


In the first case, $f$ is said to be (formally) \textit{linearizable}. In the second item, remark that $z\rightarrow az$ commutes with $\exp\left( v_{k,\lambda}\right)$ and that $f_0$ (hence $f$) has infinite order. We note that any element $f\in \Difffor$ is almost
contained in a formal flow: in case (2), the iterate $f_0^{\circ k}=\exp\left( k v_{k,\lambda}\right)$ is in a flow.
If we turn to abelian groups, the classification looks like that of flows:

\begin{thm}\label{TH:formalG}(\cite[\S 1.4]{Frankpseudo}) 
Any abelian subgroup $G \subset \Difffor$ is conjugated to a subgroup of one of the following models:
\begin{enumerate}
\item $\mb L :=\{ f(z) = a z\ ;\ a \in \C^* \}$.
\item $\mb E_{k,\lambda}:= \{a\cdot \exp\left(t v_{k,\lambda}\right)\ ;\ a^k=1,\ t\in\C \}$.
\end{enumerate}
These groups are respectively characterized as the group of elements of $\Difffor$ that 
commute with the respective models of $1$-form (\ref{eq:normformoneform}) or vector fields (\ref{eq:normformvectfield}).
Moreover, $\mb L $ (resp. $\mb E$) coincide with the centralizer in $\Difffor$ of any non torsion element
of $\mb L $ (resp. $\mb E$).
\end{thm}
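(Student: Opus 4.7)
The plan is to argue by centralizer analysis: pick a well-chosen element of $G$, place it in normal form using Theorem \ref{TH:formal}, and show every other element of $G$ must lie in its centralizer. All three claims of the theorem---the classification, the characterization as commutants of the normal $1$-forms/vector fields, and the fact that $\mathbb L$ and $\mathbb E_{k,\lambda}$ are the centralizers of any of their non-torsion elements---will drop out of the same computation.

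First, suppose $G$ consists entirely of torsion elements. Each torsion $f\in\Difffor$ is formally linearizable (by the standard averaging $\frac{1}{n}\sum_{j=0}^{n-1} f_0^{-j}\circ f^{\circ j}$, with $f_0$ its linear part and $n$ its order), and the abelian commutation relations permit a simultaneous formal linearization, placing $G$ inside $\mathbb L$. Otherwise $G$ contains a non-torsion $f$ which, by Theorem \ref{TH:formal}, is conjugate either to $f(z)=az$ with $a\in\C^*$ not a root of unity, or to $a\exp(v_{k,\lambda})$ with $a^k=1$. After conjugation we may assume $f$ is already in normal form; every $g\in G$ commutes with $f$, so $g\in Z(f)$. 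In the non-resonant linear normal form, writing $g(z)=\sum_{n\geq1}b_n z^n$ and expanding $g\circ f=f\circ g$ yields $b_n(a^n-a)=0$ for all $n$; as $a$ is not a root of unity, only $n=1$ survives and $g\in\mathbb L$, which also shows $Z(f)=\mathbb L$.

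The main obstacle is the centralizer of $f=a\exp(v_{k,\lambda})$ with $a^k=1$ and $f$ of infinite order. I would pass to the non-trivial tangent-to-identity iterate $f^{\circ k}=\exp(kv_{k,\lambda})$, which any $g\in Z(f)$ must also centralize. The key input is the uniqueness of the embedding of a non-trivial tangent-to-identity formal diffeomorphism in a one-parameter formal flow: $g$ must send $\{\exp(tv_{k,\lambda})\}_{t\in\C}$ to itself, hence preserve the generator $v_{k,\lambda}$---equivalently the dual $1$-form $\omega_{k,\lambda}$. Matching the leading polar term $dz/z^{k+1}$ of $g^*\omega_{k,\lambda}=\omega_{k,\lambda}$ forces the linear part of $g$ to satisfy $b^k=1$, after which a direct verification identifies the full formal symmetry group of $v_{k,\lambda}$ with $\mathbb E_{k,\lambda}$. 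This yields $G\subset\mathbb E_{k,\lambda}$, the characterization by the preserved form $\omega_{k,\lambda}$ (equivalently by $v_{k,\lambda}$), and the centralizer statement $Z(f)=\mathbb E_{k,\lambda}$ for any non-torsion $f\in\mathbb E_{k,\lambda}$, all at once.
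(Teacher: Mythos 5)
The paper gives no proof of this theorem, citing \cite[\S 1.4]{Frankpseudo}; your argument is the standard one found there and it is correct: reduce everything to the centralizer of a single well-chosen element, using the uniqueness of the infinitesimal generator of the nontrivial tangent-to-identity iterate $f^{\circ k}=\exp(kv_{k,\lambda})$ to force any commuting $g$ to preserve $v_{k,\lambda}$, and hence to lie in the symmetry group of $\omega_{k,\lambda}$. The only two steps you assert rather than carry out --- the simultaneous linearization of an \emph{infinite} torsion abelian group (averaging handles finite subgroups; for an increasing union one needs the successive conjugations to converge in the Krull topology, which follows since the linear-part map is injective on such a group) and the identification of the full formal symmetry group of $v_{k,\lambda}$ with $\mathbb{E}_{k,\lambda}$ (most cleanly done via the primitive $-\tfrac{1}{kz^k}+\lambda\log z$ of $\omega_{k,\lambda}$, whose invariance up to an additive constant $t$ reduces $g$ to $\exp(tv_{k,\lambda})$ composed with a $k$-th root of unity) --- are routine and at least as detailed as the paper's own treatment, which is a bare citation.
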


\subsection{Foliations and closed $1$-forms}\label{sec:folclosed}

We immediately deduce:

\begin{cor}\label{cor:folformalclosedform}
Any foliation $\F\in\Folf$ can be defined by a formal closed meromorphic 
$1$-form $\omega$ on $U$ with polar divisor $(\omega)_\infty=(k+1)[C]$ for some $k\in\N$.
Moreover, the closed $1$-form $\omega$ defining $\F$ is unique up to a multiplicative constant
except when $\F$ is a fibration, i.e. admits a global first integral.
\end{cor}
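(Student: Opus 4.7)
The strategy is to exploit Theorem \ref{TH:formalG}. Since $\pi_1(C)\simeq\lattice$ is abelian, the holonomy image $\rho_\F(\pi_1(C))\subset\Difffor$ is an abelian subgroup, so after a formal change of transversal coordinate (a global conjugation by some $h\in\Difffor$) we may assume $\rho_\F(\pi_1(C))$ is contained in one of the two normal models $\mb L$ or $\mb E_{k,\lambda}$. In either case the dual 1-form $\omega_0$ from (\ref{eq:normformoneform})---with polar order $1$ (resp.\ $k+1$) at the origin---is invariant under the entire model group, hence under $\rho_\F(\pi_1(C))$, and it is closed (every meromorphic 1-form in one variable is automatically closed).

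Next I would globalize $\omega_0$ using the formal cover $\pi:\tilde U\to U$ induced by the universal cover $\C\to C$, with deck group $\lattice$. The pulled-back foliation $\tilde\F=\pi^*\F$ has simply connected base, hence no holonomy, so it admits a global formal submersion $\tilde f:\tilde U\to\C$ defining it, normalized so that $\tilde f\circ\gamma=\rho_\F(\gamma)\circ\tilde f$ for every $\gamma\in\lattice$. Setting $\tilde\omega:=\tilde f^*\omega_0$, the invariance of $\omega_0$ gives $\gamma^*\tilde\omega=\tilde f^*(\rho_\F(\gamma)^*\omega_0)=\tilde\omega$, so $\tilde\omega$ descends to a closed formal meromorphic 1-form $\omega$ on $U$ defining $\F$. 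Its polar divisor equals $(k+1)[C]$, read off from the pole order of $\omega_0$ at $0$ (with the convention $k=0$ in the linearizable case).

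For the uniqueness clause, I would argue as follows: if $\omega$ and $\omega'$ are two closed meromorphic 1-forms defining $\F$, then locally on a foliation chart $U_i$ write $\omega=g_i\,df_i$ and $\omega'=g'_i\,df_i$. Closedness $d\omega=0$ forces $dg_i\wedge df_i=0$, so $g_i$ depends only on $f_i$, and similarly for $g'_i$. Hence the ratio $\omega'/\omega=g'_i/g_i$ is constant on leaves and glues to a global formal meromorphic function on $(U,C)$. If it is non-constant, it is a first integral exhibiting $\F$ as a fibration; otherwise $\omega'=c\omega$ for some $c\in\C^*$. Conversely, if $\F$ is a fibration with first integral $f$, then $df$ and $f\,df$ are non-proportional closed 1-forms defining $\F$.

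The main technical point will be making the formal universal cover $\tilde U\to U$ and the compatible global first integral $\tilde f$ precise. This is essentially routine: the topological cover $\C\to C$ lifts the cocycle (\ref{eq:submdefolf}) to one over a simply connected base, where any $\Difffor$-valued cocycle is a coboundary, allowing us to glue the local $f_i$'s into a single $\tilde f$ on $\tilde U$, well defined up to the deck action by holonomy. Once that lift is in place, the rest of the argument is formal pullback and descent.
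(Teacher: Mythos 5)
Your proposal is correct and follows essentially the same route as the paper: both invoke Theorem \ref{TH:formalG} to produce a $1$-form on the transversal invariant under the abelian holonomy group, globalize it by pulling back through local first integrals (your descent from the universal cover is just a repackaging of the paper's direct gluing, since the deck action, resp.\ the transition cocycle, lies in the stabilizer of $\omega_0$), and prove uniqueness by observing that the ratio of two defining closed forms is a first integral. No gaps.
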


\begin{proof}The holonomy $\rho_{\F}$ takes values in an abelian subgroup $G\subset\Difffor$.
By Theorem \ref{TH:formalG}, there exists a formal meromorphic $1$-form $\omega_0=h(z)dz$ invariant 
by $G$: if $\varphi_*G$ is in the model $\mb L$ (resp. $\mb E_{k,\lambda}$), 
then $\omega_0=\varphi^*\omega_{\alpha}$ (resp. $\varphi^*\omega_{k,\lambda}$).
There is a unique global and closed formal meromorphic $1$-form $\omega$ on $(U,C)$ which is invariant by $\F$,
i.e. $\omega\wedge\ df_i\equiv0$ for local formal first integrals, and coincide with $\omega_0$ in the holonomy coordinate:
$\omega\vert_{U_0}=h(f_0)df_0$ with notations of (\ref{eq:submdefolf}). Indeed, if one changes local first integrals $f_i:U_i\to\C$
such that all $\varphi_{ij}\in G$, then the the resulting form $f_i^*\omega_0$ remains unchanged and equal to $f_i^*\omega_0$.
We note that $\omega_0$ must have a pole at $z=0$, and therefore $\omega$ must have a pole along $C$.

Now, if there is another formal closed meromorphic $1$-form $\omega'$ also defining $\F$, 
then we must have $\omega'=f\omega$ for a formal meromorphic function $f$; after derivation we get
$df\wedge\omega=0$ so that $f$ is a global first integral for $\F$. If $f$ is not constant, 
then maybe changing to $1/f$, we get a non constant formal holomorphic map $f:U\to\C$ 
which is constant on the leaves, therefore a fibration.
\end{proof}

Conversely to Corollary \ref{cor:folformalclosedform}, we have:

\begin{thm}\label{thm:formalclosedformfol}
Let $\omega$ be a formal $1$-form whose polar divisor $(\omega)_\infty$ is supported by $C$
(or empty). Then
\begin{enumerate}
\item either $\F_\omega$ defines an element of $\Folf$,
\item or $\F_\omega$ is (regular) transversal to $C$.
\end{enumerate}
If $C$ is not the fiber of a fibration, then $\omega$ is closed.
\end{thm}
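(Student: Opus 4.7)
The plan is to leverage the elliptic structure of $C$ and the degree-zero nature of $N_C$ to pin down the behavior of $\omega$ along $C$, then invoke the uniqueness of closed defining forms.

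Let $p$ be the (minimal) order of pole of $\omega$ along $C$, so that $\omega|_C$, viewed as a global section of $\Omega^1_U|_C\otimes N_C^{\otimes p}$, is nonzero. From the exact sequence
\[0\to N_C^*\to \Omega^1_U|_C\to \Omega^1_C\to 0\]
tensored by $N_C^{\otimes p}$, and using $\Omega^1_C\simeq\OO_C$ (elliptic $C$), the ``tangent component'' $\bar\omega$ of $\omega|_C$ lies in $H^0(C,N_C^{\otimes p})$. Since $N_C$ is a degree-zero line bundle on the elliptic curve $C$, this space is either $\C$ (when $N_C^{\otimes p}\simeq\OO_C$) or zero, and hence $\bar\omega$ is either zero or a nowhere-vanishing constant section. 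If $\bar\omega\ne 0$ then $C$ is transverse to $\F_\omega$ at every point, giving case~(2). If $\bar\omega = 0$ then $\omega|_C$ lies in $H^0(C,N_C^{\otimes(p-1)})$, and the same dichotomy, combined with $\omega|_C\ne 0$, forces this restriction to be nowhere-vanishing; so $C$ is a regular leaf of $\F_\omega\in\Folf$, giving case~(1). In either alternative $\F_\omega$ is regular along $C$.

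For the closedness assertion, assume $C$ is not the fiber of a fibration. In the tangent case, Corollary~\ref{cor:folformalclosedform} produces a closed formal meromorphic $1$-form $\omega'$ defining $\F_\omega$ with polar locus in $C$, unique up to a multiplicative constant provided $\F_\omega$ is not itself a fibration; but any fibration with $C$ as a leaf would place $C$ inside a fiber, contradicting the hypothesis. Writing $\omega=h\omega'$ for a formal meromorphic $h$, the uniqueness then forces $h$ to be a constant and $\omega$ to be closed. In the transverse case, pulling back a translation-invariant holomorphic $1$-form from $C$ along the formal submersion $U\to C$ defined by $\F_\omega$ yields a closed holomorphic defining form $\omega'$. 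Writing $\omega=h\omega'$, if $h$ were a non-constant first integral it would descend to a meromorphic function on the leaf-space $C$; its poles would produce poles of $\omega$ along entire leaves of $\F_\omega$, none of which lie in $C$, contradicting the polar-locus assumption on $\omega$. Hence $h$ is constant and $\omega$ is closed.

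The delicate points are the global identification of $\omega|_C$ with a well-defined section of the advertised line-bundle tower on $C$, and the construction of the transverse analog of Corollary~\ref{cor:folformalclosedform} via the formal submersion $U\to C$ coming from the transverse foliation; once these are in place, the elliptic structure of $C$ combined with the degree-zero property of $N_C$ immediately yields the clean all-or-nothing dichotomy precluding singularities of $\F_\omega$ along $C$, and the uniqueness of closed defining forms delivers closedness of $\omega$ under the stated hypothesis.
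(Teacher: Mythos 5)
Your dichotomy argument is essentially the paper's own: the paper also restricts $\omega$, viewed as a section of $\Omega^1_U((k+1)C)$, to $C$, uses that the tangential component lands in $H^0(C,\omega^1_C\otimes N_C^{\otimes(k+1)})$ (a degree-zero bundle on an elliptic curve, hence with only nowhere-vanishing or identically zero sections), and in the tangent case passes to the normal component (the residue, a section of $N_C^{\otimes k}$) to conclude regularity. That part of your proposal is correct.

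The closedness part has a genuine gap. Writing $\omega=h\omega'$ with $\omega'$ the closed defining form, the whole content of the claim is that $h$ is constant, and neither of your two arguments establishes this. In the tangent case you invoke the uniqueness clause of Corollary~\ref{cor:folformalclosedform}, but that clause asserts uniqueness up to constant \emph{among closed} defining forms; since $\omega$ is precisely not yet known to be closed, it does not apply to the pair $(\omega,\omega')$. In the transverse case you argue under the premise that ``$h$ is a first integral,'' but $h$ being a first integral is equivalent to $dh\wedge\omega'=0$, i.e.\ to $d\omega=0$ --- the very statement to be proved. You never exclude the a priori possibility that $h$ is a non-constant meromorphic function which is \emph{not} a first integral, in which case $\omega$ would simply fail to be closed. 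The missing step, which is where the hypothesis really enters, is the following: by the first part $\omega$ is nowhere vanishing near $C$, so the divisors of both $\omega$ and $\omega'$ are supported on $C$, hence so is the divisor of $h$; therefore $h$ or $1/h$ is a formal holomorphic function on $(U,C)$. Such a function is constant on the compact curve $C$, and if it were non-constant it would define a formal fibration having (a multiple of) $C$ as a fiber, contradicting the hypothesis. Hence $h$ is constant and $\omega=h\omega'$ is closed. This is the argument the paper uses, and it is needed in both cases.
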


\begin{proof}
The divisor of $\omega$ can be written ${(\omega)}_0-{(\omega)}_\infty=E-(k+1) C$, where $E$ is an effective divisor
and $k\in\Z$. One can assume that no component of $E$ coincides with $C$. 
Viewing $\omega$ as a holomorphic section of $H^0(U,\omega^1_U((k+1)C))$, we get by restriction
to $C$ a section 
$$\omega\vert_C\in H^0(C,\omega^1_C\otimes N_C^{k+1}).$$
If it is a non trivial section, then it has no zero meaning that the twisted $1$-form $\omega$ is regular transversal to $C$.
If $\omega\vert_C\equiv0$, since the twisted $1$-form $\omega$ cannot vanish identically on $C$,
we deduce that $C$ is $\F_\omega$-invariant. In this latter case, we can consider
the residue 
$$\mbox{Res}_C \left(f^{-k-1}\omega\right)\in H^0(U,{{N}_C}^{\otimes k})$$ 
(where $f$ is a section of $\OO_U(C)$ vanishing on $C$) 
which must be a non trivial section, hence nowhere vanishing. In particular $E$ is trivial, and $\F_\omega$ is regular, 
but this time belonging to $\Folf$. 
In each case, $\F$ can be defined by a formal closed meromorphic $1$-form $\omega_0$ whose polar divisor $(\omega)_\infty$ is supported on $C$: in case (1) it follows from Corollary \ref{cor:folformalclosedform}, and in case (2)
we define $\omega_0=f^{-1}(dx)$ where $f:U\to C$ is the fibration and $dx$ is the holomorphic $1$-form on $C$.
If $C$ is not the fiber of a fibration, then there does not exist non constant formal holomorphic function on $U$:
such a function should be constant on $C$ and  consequently define a formal fibration, contradiction.
But $\omega=f\omega_0$ for some formal meromorphic function $f$; since divisors of $\omega$ and $\omega_0$
are supported on $C$, so is the divisor of $f$. We conclude that $f$ or $1/f$ is formal holomorphic and therefore constant.
\end{proof}

\subsection{Holonomy and periods of closed $1$-forms}\label{sec:holonomyperiods}

Consider the vector space $\closed$ of formal closed meromorphic $1$-forms whose polar divisor is (empty or) supported on $C$. 
We can define the periods of $\omega$ as a morphism $\pi_1(U\setminus C)\to(\C,+)$ obtained by integration of $\omega$ along paths. Note that this is well defined even if ${\omega}$ is only a formal $1$-form. Indeed, let $\pi : (\tilde U,E) \to (U,C)$ be the real analytic polar blow-up along $C$. The exceptional divisor is a $S^1$-bundle over $C$ with fiber over a point $p\in E$  parametrizing the ray on
a transversal to $C$ through $\pi(p)$. Given $\omega\in\closed$, one can integrate it to get a {\it primitive}
on the rays near $p$ taking the form $-\frac{\alpha}{ky^k} + \lambda \log y+c+o(y)$.
It is well defined by $\omega$ up to some integration constant. 
Following paths on $E$ we obtain a representation $\mathrm{per}_\omega : \pi_1(E) \to \mathbb C$. 
Since $E$ is a retraction of $U\setminus C$,  we have the sought representation. 

When $N_C$ is torsion, of order $m$ say, we can deduce a representation $\sigma_\omega : \pi_1(C) \to \mathbb C$ as follows. 
First consider the unitary representation $\rho_1: \pi_1(C) \to \mathbb C^*$ taking values in $m^{\text{th}}$ roots of unity.
Let $K:=\mbox{ker}\ {\rho}_1$ denotes the kernel: $K\subset\lattice$ has index $m$ in the lattice.
The  $S^1$-principal bundle $E$ 
comes naturally endowed with a flat connection with monodromy representation $\rho_1$ given by the unitary 
connection attached to $N_C$. This allow to lift loops $\gamma\in K$ as loops $\tilde\gamma\in\pi_1(E)$.
Define $\sigma(\gamma)=\mathrm{per}_\omega(\tilde\gamma)$ for all $\gamma\in K$. This can be extended as a unique
morphism $\sigma_\omega : \pi_1(C) \to \mathbb C$ on the whole lattice. This is no more a period mapping
since it is twisted by the unitary connection: for $\gamma\in \lattice\setminus K$, there is no lift $\tilde\gamma$ for which 
we would have $\sigma(\gamma)=\mathrm{per}_\omega(\tilde\gamma)$.

When $N_C$ is not torsion, there is no canonical way to deduce a morphism $\pi_1(C) \to \mathbb C$
from the period map.
However, as $(U,C)$ is a topologically trivial neighborhood, the fundamental group split as a product 
$\pi_1(U\setminus C)\simeq\pi_1(C)\times\Z$ and we can define 
$\sigma^\infty_\omega:\pi_1(C)\to\C$ as the restriction of the period map $\mathrm{per}_\omega$ to the first component. 
Note that $\sigma_\omega^\infty$ depends on the topological trivialization, except when $\omega$ has no residue along $C$.  Note also, in the torsion case, that the two defined morphisms $\sigma_\omega$ and $\sigma_\omega^\infty$
have no reasons to coincide unless $N_C$ is trivial.

One easily checks:

\begin{prop}\label{prop:holonomyperiods}
Let $\F$ and $\omega$ like in Corollary \ref{cor:folformalclosedform}
and let $\omega_0$ be the restriction of $\omega$ to a (formal) transversal $(T,z)\simeq(\C,0)$.
\begin{enumerate}
\item If $\omega_0=\frac{dz}{z}$, then $\varphi_\gamma(z)=e^{\sigma^\infty_\omega(\gamma)}z$.
\item If $\omega_0=\frac{dz}{z^{k+1}}+\lambda\frac{dz}{z}$, 
then $\varphi_\gamma(z)=a_\gamma\cdot\exp\left(\sigma_\omega(\gamma) v_{k,\lambda} \right)$
where $a_\gamma$ is the holonomy of the unitary (actually torsion of order dividing $k$) connection on $N_C$.
\end{enumerate}
\end{prop}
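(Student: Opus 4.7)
The plan is to extract $\varphi_\gamma$ from the period representation of $\omega$ by using that a multivalued formal primitive $F$ of $\omega_0$ is constant along leaves of $\F$. I fix the branch $F(z)=\log z$ in case (1), respectively $F(z)=-\frac{1}{kz^k}+\lambda\log z$ in case (2); since $\omega$ is closed on $U$ and $\F=\ker\omega$, this extends to a formal multivalued primitive of $\omega$ in a neighborhood of $C$, whose monodromy along any loop in $U\setminus C$ is precisely $\mathrm{per}_\omega$. Leaf-wise continuation from $z\in T$ to $\varphi_\gamma(z)\in T$ preserves $F$, whereas the branch on $T$ yields a different value; consequently, $F(\varphi_\gamma(z))-F(z)$ equals the $\omega$-period of the loop in $U\setminus C$ formed by the leaf lift of $\gamma$ concatenated with a return segment on $T$.

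I then compute $F(\varphi_\gamma(z))-F(z)$ using the normal forms of $\varphi_\gamma$ provided by Theorem \ref{TH:formalG}. In case (1), $\varphi_\gamma(z)=a_\gamma z$ gives $\log a_\gamma$; a homotopy argument in the topological splitting $\pi_1(U\setminus C)\simeq\pi_1(C)\times\Z$ identifies the leaf-plus-return loop with $\gamma$ modulo meridians, and after exponentiation the meridian ambiguity disappears, yielding $a_\gamma=e^{\sigma^\infty_\omega(\gamma)}$. In case (2), writing $\varphi_\gamma(z)=a_\gamma\exp(t_\gamma v_{k,\lambda})(z)$ with $a_\gamma^k=1$, the duality $\omega_{k,\lambda}(v_{k,\lambda})\equiv 1$ gives $F(\exp(t v_{k,\lambda})(z))-F(z)=t$, while the invariance of $\omega_{k,\lambda}$ under $z\mapsto a_\gamma z$ (thanks to $a_\gamma^k=1$) gives $F(a_\gamma z)-F(z)=\lambda\log a_\gamma$; hence $F(\varphi_\gamma(z))-F(z)=t_\gamma+\lambda\log a_\gamma$. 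Restricting to $\gamma\in K=\ker\rho_1$, where $a_\gamma=1$, and matching with the definition of $\sigma_\omega$ via the flat unitary connection on $E$, I obtain $t_\gamma=\sigma_\omega(\gamma)$ for every $\gamma\in K$.

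Both $\gamma\mapsto t_\gamma$ and $\sigma_\omega$ are group morphisms $\pi_1(C)\to\C$; since $K$ has finite index in the torsion-free lattice $\pi_1(C)\simeq\lattice$, agreement on $K$ propagates to all of $\pi_1(C)$ (each $\gamma$ satisfies $m\gamma\in K$, and division by $m$ is well-defined in $\C$). This completes case (2), with $a_\gamma$ identified as the monodromy of the unitary connection by construction. The main obstacle is the bookkeeping in the first step: matching the leaf-plus-return loop to the specific homotopy class implicit in the definitions of $\sigma^\infty_\omega$ (which depends on the topological trivialization when $\omega$ has residues) and of $\sigma_\omega$ (defined through horizontal lifting in the $S^1$-bundle $E$). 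One must verify that the spurious meridian and unitary-twist contributions either cancel or are absorbed by the choice of branch of $\log$ in the final formula.
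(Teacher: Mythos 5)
Your argument is correct and is precisely the ``direct computation'' that the paper's one-line proof alludes to: a multivalued primitive $F$ of $\omega$ is constant on leaves, so the holonomy return map shifts $F$ by a period, and evaluating $F\circ\varphi_\gamma-F$ on the normal forms of Theorem \ref{TH:formalG} identifies $t_\gamma$ with $\sigma_\omega(\gamma)$ (resp. $\log a_\gamma$ with $\sigma^\infty_\omega(\gamma)$ mod $2i\pi$, which exponentiation absorbs, consistently with the remark following the proposition). The bookkeeping you flag does resolve as you describe --- the linear part of the holonomy is a flat connection on $N_C$ with finite, hence unitary, monodromy, so it coincides with the unitary connection used to define the lifts, and the meridian ambiguity in the return segment matches the branch ambiguity of the $\lambda\log$ term --- and your extension from $K$ to all of $\lattice$ by divisibility is exactly how $\sigma_\omega$ is defined in the paper.
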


\begin{proof}It is a direct computation. \end{proof}

\begin{remark}
In item (1), $\sigma^\infty_\omega$ may depend on the trivialization, but $e^{\sigma^\infty_\omega}$ does not. Indeed, $\sigma^\infty_\omega$ is completely determined up to a multiple of $2i\pi$ corresponding to the monodromy of a primitive of $\omega_0$ around $C$. 
\end{remark}

To summarize, we have defined a period morphism 
\begin{equation}\label{eq:periodmorph}
\sigma^\infty:\closed\to \Hom(\lattice,\C)\ ;\ \omega\mapsto \sigma^\infty_\omega.
\end{equation}
depending on the choice of a topological trivialization of $(U,C)$, and when $N_C$ is torsion,
we have exhibited a canonical morphism 
\begin{equation}\label{eq:periodmorphK}
\sigma:\closed\to \Hom({\lattice},\C)\ ;\ \omega\mapsto \sigma_{\omega}.
\end{equation}
In case $C$ is not a fiber of a fibration,  we then have a natural inclusion $\Folf\subset\PP(\mc C)$.

\begin{cor}\label{cor:noperiodfibration}
Suppose that $N_C$ is torsion and let $\omega\in\closed$ non trivial. Then are equivalent 
\begin{itemize}
\item $\omega$ has no periods along $C$, i.e. $\sigma_{\omega}\equiv0$, 
\item $\F_\omega$ defines a fibration with fiber $C$.
\end{itemize}
Therefore, if $C$ is not the fiber of a fibration, then the period map $\sigma$ 
 is injective
and we have a natural inclusion $\Folf\subset\PP(\closed)$.
\end{cor}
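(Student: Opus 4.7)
My plan is to establish the biconditional first via Proposition \ref{prop:holonomyperiods}, and then read off injectivity of $\sigma$ and the embedding $\Folf \subset \PP(\closed)$ from the equivalence together with Corollary \ref{cor:folformalclosedform}. For the easy direction, suppose $\F_\omega$ defines a fibration with fiber $C$, and let $F\colon U \to \C$ be the global formal first integral. Since $N_C$ has order $m$ and $F|_C = 0$, locally $F = u^m$ for $u$ a local transverse coordinate, hence $F^{-1}(0) = mC$. Corollary \ref{cor:folformalclosedform} provides a meromorphic $g$ with $\omega = g(F)\,dF$. For $\gamma \in K = \ker \rho_1$, the flat lift $\tilde\gamma$ is horizontal for the unitary connection, so $u$ (hence $F$) is constant along $\tilde\gamma$; thus $\int_{\tilde\gamma}\omega = 0$. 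Since $\lattice/K \cong \Z/m\Z$ is finite and $\C$ is torsion-free, the unique extension forces $\sigma_\omega \equiv 0$.

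For the reverse direction, assume $\sigma_\omega \equiv 0$ and $\omega \neq 0$. I first rule out that $\omega$ is pole-free: otherwise $\omega|_C$ is a nonzero element of $H^0(C,\Omega^1_C) = \C\,dx$, with nonzero periods along a basis of $\lattice$, contradicting $\sigma_\omega = 0$. Hence $\omega$ has a pole along $C$ and $\F_\omega \in \Folf$ by Theorem \ref{thm:formalclosedformfol}. I then apply Proposition \ref{prop:holonomyperiods}. In Case (2) the holonomy $\varphi_\gamma = a_\gamma\exp(\sigma_\omega(\gamma)v_{k,\lambda})$ immediately reduces to $a_\gamma z$, of order dividing $m$. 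In Case (1) the formula $\varphi_\gamma = e^{\sigma^\infty_\omega(\gamma)}z$ uses $\sigma^\infty$ rather than $\sigma$; the discrepancy is computed by comparing the flat and topological lifts of $K$-cycles in the $S^1$-bundle $E$, which differ by an integer number of fiber turns, each contributing $2\pi i\,\mathrm{Res}_C(\omega)$ to the period. Extending this relation uniquely through $\lattice/K \cong \Z/m\Z$ then pins $e^{\sigma^\infty_\omega(\gamma)}$ to an $m$-th root of unity, so the holonomy again has order dividing $m$.

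With finite holonomy of order dividing $m$ in either case, one chooses local formal first integrals $f_i$ with overlap cocycle of $m$-th roots of unity $\zeta_{ij}$; then $\{f_i^m\}$ glues into a global formal first integral $F\colon U \to \C$ with $F^{-1}(0) = mC$, realizing $\F_\omega$ as a fibration with fiber $C$. If $C$ is not the fiber of any fibration, the equivalence gives $\ker \sigma = 0$ on $\closed$, and Corollary \ref{cor:folformalclosedform} assigns to each $\F \in \Folf$ a unique line $[\omega] \in \PP(\closed)$, yielding the embedding $\Folf \subset \PP(\closed)$. I expect the main obstacle to be the Case (1) bookkeeping: translating $\sigma_\omega = 0$ into torsion of $e^{\sigma^\infty_\omega}$ requires careful accounting of the fiber winding numbers that distinguish the topological and unitary-flat trivializations of $E$, together with using the finiteness of $\lattice/K$ to pin down the unique extension.
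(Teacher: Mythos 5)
Your overall route is essentially the paper's own: both reduce the equivalence to Proposition \ref{prop:holonomyperiods} together with the dichotomy of Theorem \ref{thm:formalclosedformfol}, and both ultimately rest on ``$\sigma_\omega\equiv 0$ forces finite holonomy, hence a global first integral $f_i^m$, hence a fibration with fiber $C$.'' The two points you flagged as delicate are handled correctly: the discrepancy between $\sigma_\omega$ and $\sigma^\infty_\omega$ in case (1) of Proposition \ref{prop:holonomyperiods} is indeed a whole number of fiber turns, each contributing $2i\pi\,\mathrm{Res}_C(\omega)$, so $e^{\sigma^\infty_\omega}$ becomes an $m$-th root of unity on all of $\lattice$; and your direct proof of the easy direction (the paper only says it is ``easily seen'') is fine once ``$F$ is constant along $\tilde\gamma$'' is read as ``the flat lift is homotopic in $E$ to a lift inside a nearby fiber of $F$,'' which holds because the Bott connection of the fibration restricted to $C$ is the unitary connection.

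There is, however, one step that fails as written: the claim that a pole-free $\omega\in\closed$ restricts to a \emph{nonzero} element of $H^0(C,\Omega^1_C)$. The space $\closed$ constrains only the polar divisor, so $\omega$ may vanish along $C$, and this happens in a configuration the corollary must cover: if $C$ is already a fiber of a fibration $F$ with $F^{-1}(0)=mC$, then $\omega=dF$ lies in $\closed$, has no pole, and satisfies $\omega\vert_C=d(F\vert_C)=0$. Here $\sigma_\omega\equiv 0$ and $\F_\omega$ is the fibration, so the equivalence is true — but your argument manufactures a contradiction from a false premise and silently drops this branch, whereas the paper's proof explicitly treats the sub-case where $C$ is a fiber. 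The repair is short: if $\omega$ is pole-free with $\omega\vert_C=0$, then $\F_\omega\in\Folf$, Corollary \ref{cor:folformalclosedform} gives $\omega=f\omega_0$ with $\omega_0$ carrying a pole along $C$, so $f$ vanishes on $C$ and is a nonconstant first integral with divisor supported on $C$, i.e.\ $\F_\omega$ is a fibration with fiber $C$, which is exactly the desired conclusion in that branch. A smaller point of the same nature: ``$\omega$ has a pole $\Rightarrow\F_\omega\in\Folf$'' is not literally the statement of Theorem \ref{thm:formalclosedformfol}; you need the extra remark that a closed $1$-form defining a foliation transverse to $C$ is a constant multiple of the pullback of $dx$ by the transverse fibration, hence pole-free (equivalently, $d\omega=0$ kills the leading coefficient of $dx$ in a local expression $y^{-j}(a\,dx+b\,dy)$ with $j\ge 1$).
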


\begin{proof} We follow notations of Theorem \ref{thm:formalclosedformfol} (and the end of its proof).
If we are in case (2), then $\omega$ coincides with a constant multiple of the closed $1$-form $\pi^{*}(dx)$ 
where $\pi:U\to C$ is the transverse fibration and $dx$ is the fixed abelian differential on $C$;
therefore, $\omega$
has non trivial periods (indeed, $\sigma_\omega$ coincides with the usual period map of the abelian differential $dx$). 
If we are in case (1), we have 
$\omega=f\omega_0$ where $\omega_0$ is the closed $1$-form constructed in 
Corollary \ref{cor:folformalclosedform}. After derivation, we get $df\wedge\omega_0=0$ which means that $f$
is a first integral of $\F_\omega$.  If $C$ is not the fiber of a fibration, then it implies again
 that $f$ is a constant and we can conclude that $\sigma_{\omega}$ is not identically zero, thanks to Proposition \ref{prop:holonomyperiods};
if on the contrary $C$ is a fiber of a fibration, then $f$ is non constant only in the case
where both $f$ and $\omega_0$ define the fibration (in which case $\sigma_{\omega}$ is easily seen to be trivial), otherwise we also conclude with Proposition \ref{prop:holonomyperiods}. Finally, if $C$ is not the fiber of a fibration,
then $\ker(\sigma)=\{0\}$; moreover, an element $\F\in\Folf$ is defined by a unique
closed $1$-form $\omega\in\closed$ up to a constant, so that it naturally defines an element 
of $\PP(\closed)$.
\end{proof}

\begin{remark}\label{R:infiniteorder}
Using the same type of argumentation, it is easy to see that $\sigma^\infty$ is injective whenever $N_C$ has infinite order.
\end{remark}

\section{Pencil of formal foliations}\label{S:Existence}

In this section, we aim at proving the following:

\begin{thm}\label{thm:feuilletagesformels} 
Assume that $N_C$ is torsion, but $C$ is not the fiber of a fibration.
Then the period morphism (\ref{eq:periodmorph}) defined in section \ref{sec:holonomyperiods}
$$\sigma:\closed\to \Hom({\lattice},\C)$$
is an isomorphism. 
Moreover, we can choose generators $\hat\omega_0, \hat\omega_\infty$ of $\mc C$ 
such that 
\begin{itemize}
\item  $\hat\omega_0$ has a pole of order $k+1>1$ along $C$, and $\hat\omega_0$ has zero period along $1\in\lattice$, i.e. $\sigma_{\hat\omega_0}(1)=0$;
\item $\hat\omega_\infty$ has a pole of order $0\le p+1<k+1$ along $C$.
\end{itemize}
These properties characterize $\hat\omega_0$ and $\hat\omega_\infty$ up to a constant. Moreover,  
the torsion of $N_C$ is dividing $p$ and $k$.
\end{thm}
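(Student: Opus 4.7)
By Corollary \ref{cor:noperiodfibration}, the hypothesis that $C$ is not the fibre of a fibration forces $\ker(\sigma) = 0$, so $\dim\closed \le 2$ and the theorem reduces to producing two linearly independent elements of $\closed$ with the stated polar behaviour and then reading off the periods. I would also extract the divisibility conditions directly from the pole filtration: for $\omega\in\closed$ of pure polar order $n+1\ge 2$, the leading coefficient is an intrinsic section of $N_C^{\otimes n}$ on $C$, which vanishes unless $m\mid n$ since $N_C$ has order $m$. This alone will force $m\mid k$ once $\hat\omega_0$ has pole $k+1\ge 2$, and similarly $m\mid p$ once $\hat\omega_\infty$ has pole $p+1\ge 2$.

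For $\hat\omega_0$, the starting point is the Ueda-type hypothesis. Since $\utype(U,C) = k$, the thickening $C(k)$ carries a fibration tangent to $C$, equivalent to a closed $1$-form $\omega^{(k)}$ modulo $I^{k+1}$ of polar order $k+1$ whose associated foliation is the unitary $\F_0$ and whose holonomy along $1\in\lattice$ is torsion. I would extend $\omega^{(k)}$ inductively to a formal element of $\closed$: at each step the formal Poincar\'e lemma produces local extensions on a cover of $C$, and the obstruction to gluing them into a closed form modulo $I^{n+2}$ is a \v{C}ech class with values in a power of the conormal bundle $N_C^\vee$. By Serre duality on the elliptic curve this obstruction vanishes off multiples of $m$, and at the critical orders it is a scalar that I would absorb by modifying the already-constructed lower-order extension---this modification is available because the corresponding $H^0$ is then one-dimensional. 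The inverse limit yields $\hat\omega_0\in\closed$; its pole order is exactly $k+1$, since a smaller pole would produce a fibration on $C(k+1)$ contradicting the Ueda type, and Proposition \ref{prop:holonomyperiods} applied to the torsion holonomy of $\F_0$ along $1\in\lattice$ gives $\sigma_{\hat\omega_0}(1) = 0$.

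To build $\hat\omega_\infty$ and conclude, I would let $p+1$ be the minimal pole order realised by an element of $\closed$ linearly independent from $\hat\omega_0$, and run a parallel inductive extension starting from a non-trivial leading residue at that order---arising either from a formal fibration transverse to $C$ (the holomorphic case $p = -1$, forcing $m=1$) or, when no such transverse fibration exists, from the first lower order $p+1 < k+1$ allowed by the filtration; the higher-order obstructions are absorbed as above, with the additional freedom of correcting by multiples of $\hat\omega_0$. Independence of $\hat\omega_0,\hat\omega_\infty$ combined with $\ker(\sigma)=0$ then makes $\sigma$ an isomorphism, and uniqueness up to scalars follows from the minimality of $k+1$ and $p+1$ together with $\dim\closed\le 2$. \textbf{The hard part} will be the absorption step in the construction of $\hat\omega_0$: at each critical order $n+1\in m\N$ above $k+1$ the obstruction does not vanish for free, and one must organise the modifications of the lower-order extension so as to preserve both the closedness already achieved and the torsion holonomy along the $1$-cycle fixed by the Ueda datum.
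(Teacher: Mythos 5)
Your overall strategy coincides with the paper's: injectivity of $\sigma$ via Corollary \ref{cor:noperiodfibration}, two order-by-order \v{C}ech constructions with obstructions in $H^1(C,N_C^{\otimes -n})$ vanishing unless $m\mid n$, divisibility of $k$ and $p$ by $m$ read off from the leading coefficient being a nonzero section of a power of $N_C$, and a dimension count for surjectivity. The gap sits exactly in the step you flag as the hard part, and as you describe it the step is not merely hard but false. At a critical order the obstruction is a scalar multiple $\lambda[b_{ij}]$ of the Ueda class, and the paper (Lemma \ref{lem:unicocykill}) computes precisely how it moves under a modification $\tilde y_i=y_i+cy_i^{n+1}$ of the lower-order data: the class shifts by $ck(n-k)[b_{ij}]$. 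So absorption by adjusting the lower-order extension works exactly because the Ueda class is nonzero \emph{and} $n\neq k$; at the single order $n=k$ the shift vanishes identically and the scalar $\lambda$ is a genuine invariant. It cannot be killed --- it has to be absorbed by enlarging the ansatz, letting the local primitives acquire a logarithmic term $-\lambda\log y_i^{k}$, i.e.\ giving $\hat\omega_0$ a residue $\lambda$ along $C$ (whence the normal form $\frac{dy}{y^{k+1}}+\lambda\frac{dy}{y}$). If your absorption mechanism worked at every critical order one could always normalize $\lambda=0$, contradicting Theorem \ref{TH:FORMAL_CLASS}, where $\lambda$ is a formal modulus. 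Note also that the relevant fact is not that ``the corresponding $H^0$ is one-dimensional'' but that $H^1(C,\mathcal O_C)$ is one-dimensional and spanned by the nonzero Ueda class, together with the nonvanishing of the factor $n-k$.

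Two smaller points. The parenthetical ``the holomorphic case $p=-1$, forcing $m=1$'' is wrong: a holomorphic $\hat\omega_\infty$ transverse to $C$ restricts to a section of $\Omega^1_C$ with no twist by $N_C$, so no constraint on $m$ arises --- the suspension models of \S\ref{SS:transverse} have $p=-1$ with arbitrary torsion $m$; the divisibility $m\mid p$ only concerns the case where $\F_\infty$ is tangent to $C$. And you need not carry the normalization $\sigma_{\hat\omega_0}(1)=0$ through the induction (which would indeed be delicate): once $\sigma$ is an isomorphism and every element outside $\C\cdot\hat\omega_\infty$ is known to have pole order exactly $k+1$, one simply sets $\hat\omega_0:=\sigma^{-1}(1\mapsto 0,\ \tau\mapsto 1)$, which is how the paper concludes.
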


Consider the pencil of foliations $(\F_t)$ defined by 
\begin{equation}\label{eq:pencilfol}
\F_t\ :\ \hat\omega_t:=\hat\omega_0+t\hat\omega_\infty=0,\ \ \ t\in\PP^1.
\end{equation}
They are all different $\F_t\not=\F_s$ because having non conjugated holonomy representations 
(see Proposition \ref{prop:holonomyperiods}). Finally, we have

\begin{cor}\label{cor:pencilfol}
If $N_C$ is torsion, but $C$ is not the fiber of a fibration, then 
\begin{itemize}
\item either $\Folf=\{\F_t\ ;\ t\in\PP^1\}$ and there is no fibration transversal to $C$;
\item or $\Folf=\{\F_t\ ;\ t\in\C\}$ and $\F_\infty$ is a fibration transversal to $C$.
\end{itemize}
Moreover, $\F_0$ and $\F_\infty$ are uniquely characterized by the following properties:
\begin{itemize}
\item $\F_0\in\Folf$ has torsion holonomy along $1\in\lattice$;
\item $\F_\infty\in\Folf$ has holonomy less tangent to its linear part than others, or is transversal to $C$.
\end{itemize}
\end{cor}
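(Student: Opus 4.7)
The idea is to combine Theorem~\ref{thm:feuilletagesformels} with the embedding $\Folf \hookrightarrow \PP(\closed)$ of Corollary~\ref{cor:noperiodfibration}, and then to sort out which members of the pencil actually lie in $\Folf$. Since $\sigma$ is a linear isomorphism onto $\Hom(\lattice,\C) \cong \C^2$, the space $\closed$ has dimension two with basis $\{\hat\omega_0,\hat\omega_\infty\}$, and the assignment $t \mapsto [\hat\omega_0 + t\hat\omega_\infty]$ is a bijection $\PP^1 \to \PP(\closed)$. The hypothesis that $C$ is not the fiber of a fibration makes the inclusion $\Folf \hookrightarrow \PP(\closed)$ of Corollary~\ref{cor:noperiodfibration} available, so every formal regular foliation tangent to $C$ equals $\F_t$ for a unique $t \in \PP^1$.

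The key local observation is that a closed formal meromorphic $1$-form with a genuine pole along $C$ always defines a foliation tangent to $C$: in coordinates $(x,y)$ with $C = \{y=0\}$, closedness forces any pole to sit in the $dy$-direction with leading term $\beta(x)\,dy/y^{k+1}$, so the twisted restriction $\omega|_C \in H^0(C, \omega^1_C \otimes N_C^{k+1})$ vanishes and $C$ is a leaf by the criterion in Theorem~\ref{thm:formalclosedformfol}. Applied to $\omega_t = \hat\omega_0 + t\hat\omega_\infty$ for $t \in \C$, which retains pole of order $k+1$ since $p+1 \leq k$, this gives $\F_t \in \Folf$. The status of $\F_\infty$ depends on $p$: if $p \geq 0$, the same argument applied to $\hat\omega_\infty$ places $\F_\infty$ in $\Folf$, giving $\Folf = \{\F_t\}_{t \in \PP^1}$. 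If $p = -1$, then $\hat\omega_\infty$ is closed holomorphic; were $\F_\infty$ tangent to $C$, Corollary~\ref{cor:folformalclosedform} would supply another defining closed form with genuine pole (non-proportional to $\hat\omega_\infty$), forcing $\F_\infty$ to admit a non-constant first integral, and the only fibration tangent to $C$ has $C$ itself as a fiber, which is excluded. So $\F_\infty$ is transversal to $C$, and being defined by a closed form is a fibration transversal to $C$, yielding $\Folf = \{\F_t\}_{t \in \C}$.

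For the characterizations, Proposition~\ref{prop:holonomyperiods} computes the holonomy of $\F_t$ (for $t \in \C$) along $1 \in \lattice$ as $a_1 \cdot \exp(\sigma_{\omega_t}(1)\, v_{k,\lambda_t})$, and by the normalization $\sigma_{\hat\omega_0}(1) = 0$ one has $\sigma_{\omega_t}(1) = t\,\sigma_{\hat\omega_\infty}(1)$. The key linear-algebra remark is that $\sigma_{\hat\omega_\infty}(1) \neq 0$: the evaluation $\omega \mapsto \sigma_\omega(1)$ is a non-zero linear form on $\closed$ (since $\sigma$ is surjective) whose $1$-dimensional kernel is already spanned by $\hat\omega_0$, hence does not contain the independent $\hat\omega_\infty$. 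Theorem~\ref{TH:formal} identifies $a_1 \exp(c\,v_{k,\lambda})$ as torsion only when $c = 0$, so $\F_0$ is the unique element of the pencil with torsion holonomy along $1$. For $\F_\infty$, in the transversal case it is trivially the unique transversal member of the pencil; in the tangent case, Proposition~\ref{prop:holonomyperiods} applied to $\hat\omega_\infty$ (pole $p+1$) yields holonomy whose first non-linear term is of order $p+1 < k+1$, so $\F_\infty$ is less tangent to its linear part than any $\F_t$ with $t \in \C$.

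The step I expect to require most care is the local observation that closed $1$-forms with pole always define tangent foliations, together with the uniqueness argument from Corollary~\ref{cor:folformalclosedform} that rules out $\F_\infty$ being tangent in the holomorphic case. Both are short but must be assembled from Theorem~\ref{thm:formalclosedformfol}, Corollary~\ref{cor:folformalclosedform}, and the non-fiber hypothesis. Everything else then reduces to linear algebra on $\closed$ and inspection of holonomy normal forms.
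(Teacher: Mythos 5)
Your proof is correct and follows essentially the same route the paper takes implicitly (the corollary is stated there without a separate proof, as a consequence of Theorem \ref{thm:feuilletagesformels}, Theorem \ref{thm:formalclosedformfol}, Corollary \ref{cor:noperiodfibration} and Proposition \ref{prop:holonomyperiods}): identify $\Folf$ inside $\PP(\closed)\simeq\PP^1$, decide tangency versus transversality from the pole order of $\omega_t$, and read off the characterizations of $\F_0$ and $\F_\infty$ from the holonomy formulas. The only clause you leave unstated is the absence of a fibration transversal to $C$ in the first case; it is immediate from your setup, since such a fibration would be defined by a pole-free nonzero element of $\closed$, whereas for $p\ge 0$ every nonzero combination $a\hat\omega_0+b\hat\omega_\infty$ has a pole of order $k+1$ (if $a\neq 0$) or $p+1\ge 1$ (if $a=0$) along $C$.
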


It might be interesting to compare with the non torsion case. When $N_C$ is not torsion, Arnold proved in \cite[Theorem 4.2.1]{Arnold} that the neighborhood $(U,C)$
is formally linearizable, i.e. formally equivalent to the neighborhood of the zero section in the total space of $N_C$.
In this situation, we have a one parameter family of foliations, namely those given by flat holomorphic connections on $N_C$
(the arguments stated below show that there are no other local foliations near the zero section). They are defined by a pencil of closed logarithmic
$1$-forms $\omega_0+t\omega_\infty$ where we can choose (see exact sequence (\ref{eq:exactseqflatlinear}))
\begin{itemize}
\item $\omega_0$ with residue $1$ on $C$ and purely imaginary periods along $C$ (i.e. the associated foliation $\F_0$ has unitary holonomy);
\item $\omega_\infty=dx$ to be the transverse fibration.
\end{itemize}

\subsection{Foliations in the non fibered case: existence and unicity of the pencil}\label{sec:FoliationNonTorsion}

We start following \cite{Ueda}, including the non torsion case. It should be mentioned that the existence of formal foliations has already been proved in \cite[Theorem A, p.3]{leaves} without restrictions on the genus of the embedded curve. However, the proof given here is simpler and provides further informations.

Select $(U_i)$ an open covering of some neighborhood of $C$. Let $\mc V=(V_i)$ the open covering of $C$ defined by $V_i:=U_i\cap C$. One can choose $(U_i)$ in such a way that $\mc V$ is an acyclic covering by disks. Denote by $\widehat{U_i}$ the formal completion of $U_i$ along $V_i$. Let ${y_i}\in {\mc O}(\widehat{U_i})$ some formal submersion such that $\{{y_i}=0\}=V_i$ and such that
$$a_{ij}y_j -{y_i}= y_i^2f_{ij}$$
where $a_{ij}\in Z^1(\mc V, \mc O^*_C)$ is a cocycle defining the normal bundle $N_C$ of the curve in $U$
and $f_{i,j}\in\mc O(\widehat{U_i}\cap \widehat{U_j})$.
The long exact sequence derived from 
$$0\to\C^*\to \mc O_C^*\stackrel{d\log}{\to}\Omega^1_C\to 0$$
gives in particular
\begin{equation}\label{eq:exactseqflatlinear}
0\to H^0(C,\Omega^1_C)\to H^1(C,\C^*)\to \mathrm{Pic}^0(C)\to 0.
\end{equation}
Therefore, we can choose $a_{ij}$ locally constant. Note that the cohomology class $[a_{ij}]\in H^1(C,\C^*)$ can be regarded as (the monodromy of) a flat connection attached to $N_C$.

If $N_C$ is not torsion, we can modify submersions $y_i$ in such a way that all $f_{ij}\equiv 0$
so that we get a global foliation locally defined by $dy_i=0$. Indeed, assume that we have
$$a_{ij}y_j -{y_i}= b_{ij}y_i^{n+1} +f_{ij}y_i^{n+2}$$
with $f_{ij}\in \mc O(\widehat{U_i}\cap \widehat{U_j})$ and $b_{ij}\in \mc O(V_i\cap V_j)$ not all zero for some $n\in\Z_{>0}$;
we already have a foliation up to the order $n$. Then we have

\begin{lemma}\label{admissible}
Under notations above,  we have $(b_{ij})\in Z^1(\mc V, {N_C}^{\otimes -n})$, i.e. satisfies the cocycle condition
$$b_{ij}+a_{ij}^{-n}b_{jk}+a_{ij}^{-n}a_{jk}^{-n}b_{ki}=0,\ \ \ \forall\ i,j,k.$$
Moreover, after \textbf{admissible transformation} of order $n+1$
$$\tilde y_i=y_i+b_i y_i^{n+1},\ \ \ b_i\in\mc O(\widehat{U_i})$$
we get the new cocycle
$$a_{ij}\tilde y_j -{\tilde y_i}=\underbrace{ \left[ b_{ij}-(b_i^0-a_{ij}^nb_j^0)\right]}_{\tilde b_{ij}}\tilde y_i^{n+1} +o(\tilde y_i^{n+1})$$
where $b_i^0:=b_i\vert_C$ is the restriction. 
In particular, if the class $[b_{ij}]\in H^1(C, {N_C}^{ -n})$ is zero, 
then we can make $b_{ij}=0$.
\end{lemma}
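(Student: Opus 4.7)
The plan is to perform two direct formal expansions of the defining relations $a_{ij} y_j - y_i = b_{ij} y_i^{n+1} + O(y_i^{n+2})$: first, chase them around a triple intersection $(i,j,k)$ to recover the cocycle condition; second, substitute the admissible change $\tilde y_i = y_i + b_i y_i^{n+1}$ into the defining relation and read off how the leading coefficient $b_{ij}$ transforms, recognizing the shift as a \v{C}ech coboundary in the twisted complex.

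For the cocycle condition, I would invert the defining relation to write $y_j = a_{ij}^{-1} y_i + O(y_i^{n+1})$ together with its correction at order $n+1$, then substitute into the analogous relation for $(j,k)$ and compare with the direct relation on $(i,k)$, using the multiplicative cocycle identity $a_{ik} = a_{ij} a_{jk}$ at leading order. Equating the coefficients of $y_i^{n+1}$ in the two expressions for $y_k$ in terms of $y_i$ produces a two-term additive identity linking $b_{ij}$, $b_{jk}$ and $b_{ik}$. The antisymmetry $b_{ji} = -a_{ij}^n b_{ij}$, obtained by swapping the roles of $i$ and $j$ in the defining relation, then converts this two-term identity into the three-term relation of the statement, which is exactly the \v{C}ech cocycle condition for a $1$-cochain with values in $N_C^{\otimes -n}$.

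For the admissible transformation, I expand
\[
a_{ij} \tilde y_j - \tilde y_i \;=\; (a_{ij} y_j - y_i) \;+\; a_{ij}\, b_j\, y_j^{n+1} \;-\; b_i\, y_i^{n+1}.
\]
The first summand contributes $b_{ij} y_i^{n+1}$ modulo order $n+2$. In the other two terms only the restrictions $b_i^0 = b_i|_C$ and $b_j^0 = b_j|_C$ can reach order $n+1$; converting $y_j^{n+1}$ into $y_i^{n+1}$ via $y_j = a_{ij}^{-1} y_i + O(y_i^{n+1})$ and using $\tilde y_i^{n+1} = y_i^{n+1} + O(y_i^{n+2})$ yields the stated shift $\tilde b_{ij} = b_{ij} - (b_i^0 - a_{ij}^n b_j^0)$. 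This shift is precisely the \v{C}ech coboundary $(\delta b^0)_{ij}$ of the $0$-cochain $(b_i^0)$ with values in $N_C^{\otimes -n}$, so the class $[b_{ij}] \in H^1(\mc V, N_C^{\otimes -n})$ is invariant under admissible transformations. If this class vanishes, writing $(b_{ij}) = (\delta c)_{ij}$ for a $0$-cochain $(c_i)$ and lifting each $c_i$ to an element $b_i \in \mc O(\widehat{U_i})$ (for instance as its constant extension in the $y_i$-direction) produces an admissible transformation with $\tilde b_{ij} \equiv 0$.

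The only genuine obstacle is careful bookkeeping of the transition powers $a_{ij}^{\pm n}$: one must convert every order-$(n+1)$ contribution into the trivialization on $V_i$ and verify that the resulting shift coincides exactly with the \v{C}ech coboundary operator in the correct twist of $N_C$. Once this identification is established, both the cocycle property and the vanishing criterion for $[b_{ij}]$ follow immediately from the two expansions above.
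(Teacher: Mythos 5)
Your proof is correct and follows the same route the paper intends (its own proof is just ``straightforward computation''): expand the gluing relations to order $n+1$ for the cocycle identity, and read off the coboundary shift under the admissible change of coordinates. One point deserves care: carrying out your expansion precisely, the term $a_{ij}b_j y_j^{n+1}$ contributes $a_{ij}\cdot a_{ij}^{-(n+1)}b_j^0\,y_i^{n+1}=a_{ij}^{-n}b_j^0\,y_i^{n+1}$, so the shift is $\tilde b_{ij}=b_{ij}-(b_i^0-a_{ij}^{-n}b_j^0)$, not $b_{ij}-(b_i^0-a_{ij}^{n}b_j^0)$ as printed in the statement; the exponent $-n$ is the one consistent with the coefficient sheaf $N_C^{\otimes -n}$, with the displayed cocycle condition, and with the analogous formula in Lemma~\ref{nullcoh}, so the statement has a sign typo in the exponent which your write-up silently reproduces when you say the expansion ``yields the stated shift.'' This does not affect your argument: the shift is still exactly the \v{C}ech coboundary of $(b_i^0)$ in $N_C^{\otimes -n}$, so the invariance of the class and the vanishing criterion go through as you describe.
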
 

\begin{proof}
This is a straightforward computation.
\end{proof}

Now, by Serre duality
\begin{equation}\label{e:vanishingH1}
H^1(C,N_C^{-n})\simeq H^0(C, N_C^{n})=\left\{\begin{matrix}
\C&\text{if}\ N_C^{\otimes n}=\mc O_C,\\
0& \mbox{if not.}\hfill \end{matrix}\right.
\end{equation}
When $N_C$ is not torsion, we have $H^1(C,N_C^{-n})=0$ for all $n>0$ and can successively kill the $(b_{ij})$ at each order.
We finally arrive at formal coordinates $y_i$ satisfying $y_i=a_{ij}y_j$. The corresponding formal foliation $\F$, 
locally defined by $\frac{dy_i}{y_i}=0$, is not convergent in general, but is convergent for generic $N_C$
(in the sense of Lebesgue measure on $\mathrm{Jac}(C)$) as proved by Arnol'd in \cite[Theorem 4.3.1]{Arnold}.
We note that the construction of $\F$ is unique once we have fixed the class of the cocycle $(a_{ij})$ in $H^1(C,\C^*)$. Indeed, consider two foliations $\F$,${\F}'$ together with the same linear holonomy (corresponding to the same $[a_{ij}]\in H^1(C,\C^*)$. Both holonomy representation being abelian, we deduce from theorem \ref{TH:formalG} that they are in fact linearizable. This allows to construct on the formal neighborhood of $C$ a function $f$ which locally expresses a the quotient of a first integral of $\F$ and ${\F}'$. Observe now that $f$ is necessarily constant (and consequently $\F={\F}'$), otherwise it would provide a global equation for $C$, contradicting the fact that $N_C$ has infinite order. The pencil of formal foliations described above can be then obtained by taking $\omega_0$ to be the unique logarithmic form with residue 1 on $C$ associated to the foliation in $\Folf$ with unitary holonomy and setting $\omega_\infty=\omega_0 -\omega_1$ where $\omega_1$ is a logarithmic form with residue 1 defining an element in $\Folf$ with non unitary holonomy. Thanks to Remark \ref{R:infiniteorder}, the formal $1$-form (without poles) $\omega_\infty$ has non trivial periods along $C$, and then defines a (necessarily unique) transverse fibration.

\subsection{Torsion case: fibration and Ueda class}\label{sec:Uedatype}
When $N_C$ is torsion, say of order $m$, then we can choose $a_{ij}^m=1$.
We can follow the same algorithm as in the non torsion case, but the process can stop 
each time $m$ divides $n$.
If not, i.e. if $(b_{ij})$ is zero as a cocycle at each step,
then Arnol'd proved in \cite{Arnold} that the process can be done in a convergent way,
so that we get holomorphic submersions $y_i$ satisfying $y_i=a_{ij}y_j$.
The functions $y_i^m$ patch together to define a global fibration $y:U\to\C$
and $mC$ is the zero fiber.

From now on, we assume the converse:
$$a_{ij}y_j -{y_i}= \frac{b_{ij}}{k}y_i^{k+1} +o(y_i^{k+1})$$
for some $k\equiv 0\ \mathrm{mod}\ m$ and $[b_{ij}]\in H^1(C, {N_C}^{\otimes -k})=H^1( C, \mc O_C)$
is not zero; equivalently
$$\frac{1}{{y_i}^k}-\frac{1}{{y_j}^k}= b_{ij} +o(1).$$
This cohomology class is defined by Ueda in \cite{Ueda} and is unique up to multiplication by a non zero scalar. 
It turns out to be a formal invariant which gives an obstruction to realize (formally)  the neighborhood of $C$ 
as the neighborhood of the zero section in the total space of the line bundle $N_C$. 
In Ueda's terminology, the neighborhoods with this obstruction are said to be ``of finite type''.
This allows us to define the \textbf{Ueda type} of the curve  
$$\utype(U,C):=k$$
(we set $\utype (C):=\infty$ when there is no formal obstruction). We note that we can choose the representative
$(b_{ij})$ of the Ueda class locally constant. Indeed, the long exact sequence derived from 
$$0\to\C\to \mc O_C\stackrel{d}{\to}\Omega^1_C\to 0$$
gives in particular
\begin{equation}\label{eq:exactseqflatSerre}
0\to H^0(C,\Omega^1_C)\to H^1(C,\C)\to H^1(C,\mc O_C)\to 0.
\end{equation}
The locally constant representative $(b_{ij})$ is unique up to periods 
of a holomorphic $1$-form. We choose one from now on.

\subsection{Torsion case: existence of foliations}\label{sec:ExistencePencilTorsion}

Now, let us start going on beyond Ueda obstruction: let us write
\begin{equation} \label{aijcocycle}
\frac{1}{{y_i}^k}-\frac{1}{{y_j}^k}= b_{ij} +c_{ij}{y_i}^n + o(y_i^n)
\end{equation}
for some $n\in\Z_{>0}$,
 with $[b_{ij}]\in H^1( C, \mc O_C)$ non zero and $c_{ij}\in{\mc O}(V_i\cap V_j)$.   
 
 \begin{lemma}\label{nullcoh}Under notations above,  $({c_{ij}})\in Z^1( \mc V, {N_C}^{\otimes -n})$
satisfies the cocycle condition. Moreover, if its class $[c_{ij}]\in H^1( C, {N_C}^{\otimes -n})$ is zero, 
then we can assume $c_{ij}=0$ after an admissible transformation of order $n+k +1$.
\end{lemma}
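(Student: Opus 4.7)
My plan is a direct two-step computation, matching coefficients at order $y_i^n$ in suitable identities. The workhorse throughout is the following substitution rule: from $a_{ij}y_j-y_i=O(y_i^{k+1})$ one has $y_j=a_{ij}^{-1}y_i+O(y_i^{k+1})$ and therefore
$$y_j^n=a_{ij}^{-n}y_i^n+O(y_i^{n+k}).$$

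For the cocycle assertion in (1), I would expand the telescoping identity
$$\left(\frac{1}{y_i^k}-\frac{1}{y_j^k}\right)+\left(\frac{1}{y_j^k}-\frac{1}{y_\ell^k}\right)=\frac{1}{y_i^k}-\frac{1}{y_\ell^k}$$
on $V_i\cap V_j\cap V_\ell$, substituting (\ref{aijcocycle}) on each difference. Constant terms reproduce the Ueda cocycle $b_{ij}+b_{j\ell}=b_{i\ell}$. For the $y_i^n$-coefficients, using the substitution above to translate $c_{j\ell}y_j^n$ into the trivialization $y_i$, one gets
$$c_{i\ell}=c_{ij}+a_{ij}^{-n}c_{j\ell},$$
which is exactly the 1-cocycle relation for the line bundle $N_C^{\otimes -n}$ of transition functions $a_{ij}^{-n}$.

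For (2), assume $[c_{ij}]=0$, so there exist $\beta_i\in\OO_C(V_i)$ with $c_{ij}=\beta_i-a_{ij}^{-n}\beta_j$. I would perform the admissible transformation
$$\tilde y_i:=y_i+b_i\,y_i^{n+k+1},\qquad b_i\in\OO(\widehat U_i),$$
choosing $b_i|_C=\beta_i/k$ and extending arbitrarily off $C$. Since $n\ge 1$ gives $n+k+1>k+1$, this perturbation leaves both the normal bundle cocycle $(a_{ij})$ and the Ueda cocycle $(b_{ij})$ (the data at orders $\le k+1$) untouched. A routine binomial expansion yields
$$\frac{1}{\tilde y_i^k}=\frac{1}{y_i^k}-k\,b_i^0\,y_i^n+O(y_i^{n+1}),$$
and combining with the substitution rule gives
$$\frac{1}{\tilde y_i^k}-\frac{1}{\tilde y_j^k}=b_{ij}+\bigl(c_{ij}-k(b_i^0-a_{ij}^{-n}b_j^0)\bigr)\,y_i^n+o(y_i^n).$$
With $b_i^0=\beta_i/k$ the new coefficient $\tilde c_{ij}$ vanishes identically, as desired.

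The main (mild) obstacle is the bookkeeping: one must pick the admissible transformation precisely at order $n+k+1$, high enough to leave the normal bundle cocycle and the Ueda class $[b_{ij}]$ fixed, yet low enough that the correction induced through $y_i^{-k}$ lands exactly at order $y_i^n$. The nonvanishing of the factor $k$ (we are in the case of finite Ueda type) is what ensures that every coboundary on the covering $\mc V$ is realized by some choice of restrictions $b_i^0$, so that vanishing of $[c_{ij}]\in H^1(C,N_C^{\otimes -n})$ genuinely suffices to kill $c_{ij}$.
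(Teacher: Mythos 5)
Your proposal is correct and follows exactly the paper's argument: the cocycle relation comes from summing the defining identity (\ref{aijcocycle}) over a cyclic triple of indices, and the coboundary is killed by the admissible transformation $\tilde y_i=y_i+c_iy_i^{n+k+1}$, producing the new coefficient $c_{ij}-k(c_i^0-a_{ij}^{-n}c_j^0)$. The paper states this computation without detail; your write-up supplies the same computation with the bookkeeping (in particular the substitution $y_j^n=a_{ij}^{-n}y_i^n+O(y_i^{n+k})$ and the expansion of $\tilde y_i^{-k}$) made explicit.
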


\begin{proof}The cocycle condition directly comes from considering the sum of (\ref{aijcocycle}) when 
$(i,j)$ runs over a cyclic permutation on three indices. 
Now, setting
$$\tilde y_i=y_i+c_i y_i^{n+k+1},\ \ \ c_i\in\mc O(U_i),$$
we get
$$\frac{1}{{\tilde y_i}^k}-\frac{1}{{\tilde y_j}^k}= b_{ij} +\underbrace{\left[c_{ij}-k(c_i^0-a_{ij}^{-n}c_j^0)\right]}_{\tilde c_{ij}}{\tilde y_i}^n + o(\tilde y_i^n)$$
where $c_i^0:=c_i\vert_C$ is the restriction to the curve.
If $[c_{ij}]\in H^1( C, {N_C}^{\otimes -n})$ is zero, then so is $[\frac{c_{ij}}{k}]$ and we can make $c_{ij}=0$ by conveniently choosing $c_i$.
\end{proof}

Using inductively Lemma \ref{nullcoh}, we arrive at
\begin{equation}\label{eq:inducnullcoh}
\frac{1}{{y_i}^k}-\frac{1}{{y_j}^k}= b_{ij} +c_{ij}{y_i}^{n} + o(y_i^n)
\end{equation}
with $n$ a multiple of $m=\ord(N_C)$. Since $H^1( C, \mc O_C)$ is one dimensional,
we can assume $(c_{ij})=\lambda\cdot (b_{ij})$ for some constant $\lambda\in\C$.

\begin{lemma}\label{lem:unicocykill} Under notations above, if $n\not=k$, then after an admissible transformation  
$\tilde y_i=y_i+c y_i^{n+1}$ with $c\in\C$,
one can assume that the class $[c_{ij}]\in H^1( C, {N_C}^{\otimes -n})$ is zero.
\end{lemma}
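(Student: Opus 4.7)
The plan is to plug the admissible transformation $\tilde y_i = y_i + c\, y_i^{n+1}$ with $c\in\C$ a global constant into the relation (\ref{eq:inducnullcoh}), compute the new coefficient $\tilde c_{ij}$, and show that it is a scalar multiple of $b_{ij}$ which depends affinely on $c$. Since we have already normalized $c_{ij}=\lambda b_{ij}$ and $[b_{ij}]$ generates the one-dimensional space $H^1(C,\mc O_C)$, killing $[\tilde c_{ij}]$ will reduce to a single linear equation in $c$, solvable whenever $n\neq k$.

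First I would verify that the transformation preserves the leading structure: since $n\equiv 0 \pmod{m}$ and $a_{ij}^m=1$, we have $a_{ij}^n=1$, and a direct computation shows $a_{ij}\tilde y_j -\tilde y_i=O(y_i^{k+1})$ with the same leading coefficient, so the Ueda class $[b_{ij}]$ is unchanged. The heart of the argument is the expansion
$$\frac{1}{\tilde y_i^k} \;=\; \frac{1}{y_i^k}\bigl(1+c\,y_i^n\bigr)^{-k} \;=\; \frac{1}{y_i^k} - k c\, y_i^{n-k} + O(y_i^{2n-k}),$$
and similarly for $j$. Deriving from $\frac{1}{y_i^k}-\frac{1}{y_j^k}=b_{ij}+O(y_i)$ the leading-order relation $y_j=a_{ij}^{-1}y_i + \frac{b_{ij}}{k\,a_{ij}}\,y_i^{k+1}+O(y_i^{k+2})$ and using $a_{ij}^{n-k}=1$, one checks that
$$y_i^{n-k} - y_j^{n-k} \;=\; \frac{k-n}{k}\,b_{ij}\,y_i^n + O(y_i^{n+1}).$$

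Substituting, the two expansions combine into
$$\frac{1}{\tilde y_i^k}-\frac{1}{\tilde y_j^k} \;=\; b_{ij} \,+\, \bigl(\lambda+c(n-k)\bigr)\,b_{ij}\,\tilde y_i^n + O(\tilde y_i^{n+1}),$$
so $\tilde c_{ij}=(\lambda+c(n-k))\,b_{ij}$. When $n\neq k$, the choice $c=-\lambda/(n-k)$ kills $\tilde c_{ij}$ as a cocycle, a fortiori its class in $H^1(C,N_C^{\otimes -n})$.

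The main obstacle is the case $n<k$, where the apparent $y_i^{n-k}$ terms introduced by the transformation are poles rather than zeros, and one must verify that they cancel exactly between the $i$ and $j$ contributions—leaving only a genuine correction at order $y_i^n$. This is ensured by the specific leading form of $y_j$ in terms of $y_i$ dictated by the Ueda expansion; once this cancellation is checked, the remaining algebra reduces to the scalar identity above.
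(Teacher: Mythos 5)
Your proof is correct and follows essentially the same route as the paper: substitute the admissible transformation, use the relation $y_j^{n-k}=y_i^{n-k}+\tfrac{n-k}{k}b_{ij}y_i^{n}+o(y_i^{n})$ (valid because $a_{ij}^{n-k}=1$) to see that the order-$n$ coefficient changes by a nonzero multiple of $(n-k)b_{ij}c$, and solve the resulting linear equation in $c$. Your bookkeeping of the constant (and your remark that for $n<k$ the negative-order terms $y_i^{\ell n-k}-y_j^{\ell n-k}$ cancel to order $o(y_i^n)$) is accurate, and the affine dependence on $c$ is exactly what the paper's ``straightforward computation'' produces.
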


\begin{proof}From (\ref{eq:inducnullcoh}), we deduce
${y_j}^{n-k}=y_i^{n-k}+(n-k)b_{ij}y_i^n+o(y_i^{n})$
so that straightforward computation shows
$$\frac{1}{{\tilde y_i}^k}-\frac{1}{{\tilde y_j}^k}= b_{ij} + (c_{ij}+ck(n-k)b_{ij}){\tilde y_i}^n+o(\tilde y_i^n)
$$
so that we can fix $c$ to kill the class $[c_{ij}]\in H^1( C, \mc O_C)$.
\end{proof}

\begin{lemma}
Under notations above, if $n=k$ in (\ref{eq:inducnullcoh}) then 
$$\frac{1}{{y_i}^k}-\frac{1}{{y_j}^k}-\lambda\log{\frac{{y_i}^k}{{y_j}^k}}= b_{ij} + o({y_i}^k) $$
(we choose the principal determination of the logarithm near $1$).
\end{lemma}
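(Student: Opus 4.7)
The plan is to reduce the statement to a short asymptotic expansion. Starting from the hypothesis (with $n=k$ and $c_{ij}=\lambda b_{ij}$),
$$\frac{1}{y_i^k}-\frac{1}{y_j^k}=b_{ij}+\lambda b_{ij}\,y_i^k+o(y_i^k),$$
the claimed identity is equivalent to showing that $\lambda\log(y_i^k/y_j^k)$ has the same linear term in $y_i^k$ as the right-hand side, so that the two $y_i^k$-terms cancel and only the constant $b_{ij}$ survives.

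First I would invert the hypothesis to express $y_j^k$ as a formal series in $y_i^k$. Multiplying the relation $1/y_j^k=1/y_i^k-b_{ij}-\lambda b_{ij}y_i^k+o(y_i^k)$ through by $y_i^k$ yields
$$\frac{y_i^k}{y_j^k}=1-b_{ij}\,y_i^k-\lambda b_{ij}\,y_i^{2k}+o(y_i^{2k}),$$
which is a formal unit near $1$, so the principal branch of $\log$ can be applied. Using $\log(1+u)=u+O(u^2)$ with $u=-b_{ij}y_i^k+O(y_i^{2k})$ gives
$$\log\frac{y_i^k}{y_j^k}=-b_{ij}\,y_i^k+O(y_i^{2k}).$$
This is precisely the expansion needed: combined with the hypothesis, the $\lambda b_{ij}y_i^k$ contribution on the right-hand side is matched (up to sign) by the leading logarithmic correction, and all remaining terms are $o(y_i^k)$. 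Assembling these gives the asserted identity.

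I expect no genuine obstacle here: the proof is a routine one-step asymptotic computation, and the only care needed is to keep track of constants and of the admissibility of the Taylor expansion of the logarithm, which is guaranteed by the parenthetical in the statement fixing the principal branch near~$1$. The conceptual content — namely, the observation that at the resonant level $n=k$ one cannot kill the cocycle $(c_{ij})$ by the scheme of Lemma \ref{lem:unicocykill}, and that the obstruction takes a logarithmic form — has already been set up in the preceding discussion; this final lemma is the clean algebraic expression of that obstruction.
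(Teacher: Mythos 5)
Your proof is correct and is essentially the paper's own argument: invert the cocycle relation to get $y_i^k/y_j^k=1-b_{ij}y_i^k+o(y_i^k)$ (equivalently $y_j^k=y_i^k(1+b_{ij}y_i^k+o(y_i^k))$) and expand the logarithm to first order. The discrepancy you flag with ``(up to sign)'' is real but lies in the statement's convention rather than in your computation: with $c_{ij}=\lambda b_{ij}$ one finds $\log(y_i^k/y_j^k)=-b_{ij}y_i^k+o(y_i^k)$, so the displayed identity holds with the correction term $+\lambda\log(y_i^k/y_j^k)$ (equivalently, after replacing $\lambda$ by $-\lambda$); the paper's own one-line proof carries the same slip, and it is harmless since $\lambda$ enters the normal form only up to this choice of normalization.
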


\begin{proof}From (\ref{eq:inducnullcoh}), we deduce
${y_j}^k=y_i^k(1+b_{ij}y_i^k+o(y_i^{k}))$
so that 
$$-\lambda\log{\frac{{y_i}^k}{{y_j}^k}}=\lambda\log(1+b_{ij}y_i^k+o(y_i^{k}))=\lambda b_{ij}y_i^k+o(y_i^{k})$$
which proves the result.\end{proof}

\begin{lemma}
Assume that we have
\begin{equation}\label{aijcocyclelog}
\frac{1}{{y_i}^k}-\frac{1}{{y_j}^k}-\lambda\log{\frac{{y_i}^k}{{y_j}^k}}= b_{ij} + c_{ij}{y_i}^n+o(y_i^n)
\end{equation}
with $n>k$ and ${c_{ij}}\in{\mc O}(V_i\cap V_j)$. Then $({c_{ij}})\in Z^1( \mc V, {N_C}^{\otimes -n})$
satisfies the cocycle condition. Moreover, if its class $[c_{ij}]\in H^1( C, {N_C}^{\otimes -n})$ is zero, 
then we can assume $c_{ij}=0$ up to doing an admissible transformation of order $n+k +1$.
\end{lemma}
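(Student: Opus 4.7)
The plan is to follow closely the strategy used for Lemma \ref{nullcoh}, with one additional point to check because of the logarithmic term that now appears in (\ref{aijcocyclelog}). The argument splits into two independent parts: the cocycle condition on $(c_{ij})$, and the admissible transformation that kills its cohomology class.

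For the cocycle condition, I would cyclically sum the identity (\ref{aijcocyclelog}) over a triple of indices $(i,j,\ell)$. The left-hand side collapses: the rational pieces $\frac{1}{y_i^k}-\frac{1}{y_j^k}$ telescope to zero, and the three logarithmic contributions sum to a principal-branch logarithm of $1$, hence also zero---this uses that $y_i^k/y_j^k$ is a formal unit close to $a_{ij}^k=1$ (since $m \mid k$), so principal determinations can be chosen coherently around the triple. On the right-hand side, the Ueda cocycle relation annihilates the constant terms $b_{ij}+b_{j\ell}+b_{\ell i}=0$, and using $y_j^n = a_{ij}^{-n} y_i^n + o(y_i^n)$ and $y_\ell^n = a_{ij}^{-n} a_{j\ell}^{-n} y_i^n + o(y_i^n)$ on overlaps, identifying the remaining order-$y_i^n$ coefficients yields exactly the cocycle relation $c_{ij} + a_{ij}^{-n} c_{j\ell} + a_{ij}^{-n} a_{j\ell}^{-n} c_{\ell i} = 0$ in $Z^1(\mc V, N_C^{\otimes -n})$.

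For the second part, I would apply an admissible transformation $\tilde y_i = y_i + c_i y_i^{n+k+1}$ with $c_i \in \mc O(\widehat{U_i})$, and compute the effect on (\ref{aijcocyclelog}) modulo $o(\tilde y_i^n)$. The rational part gives $\frac{1}{\tilde y_i^k} = \frac{1}{y_i^k} - k c_i y_i^n + o(y_i^n)$, while the log term $\log(\tilde y_i^k/\tilde y_j^k)$ differs from $\log(y_i^k/y_j^k)$ only by an $O(y_i^{n+k})$ correction, negligible at the order at which we work. The new order-$n$ coefficient reads
\[\tilde c_{ij} = c_{ij} - k(c_i^0 - a_{ij}^{-n} c_j^0), \qquad c_i^0 := c_i|_C.\]
If $[c_{ij}]$ vanishes in $H^1(C, N_C^{\otimes -n})$, one picks a $0$-cochain $(s_i)$ realising the coboundary $s_i - a_{ij}^{-n} s_j = c_{ij}/k$, sets $c_i^0 := s_i$, and lifts arbitrarily to $c_i \in \mc O(\widehat{U_i})$; this makes $\tilde c_{ij} = 0$.

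The only genuinely new input beyond Lemma \ref{nullcoh} is the treatment of the logarithm, and the reason it causes no trouble is precisely why the admissible transformation is chosen of order $n+k+1$ rather than $n+1$: the relative perturbation $\tilde y_i / y_i = 1 + c_i y_i^{n+k}$ produces a log correction of order $n+k$, which is already $o(y_i^n)$ since $k \geq 1$. Hence the entire computation reduces to the one carried out in Lemma \ref{nullcoh}, with identical transformation formula for the cocycle.
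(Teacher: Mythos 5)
Your proposal is correct and follows exactly the route the paper takes: the paper's proof consists of the single line ``It is similar to that of Lemma \ref{nullcoh}'', and you have carried out that similarity in detail, with the same cyclic-summation argument for the cocycle condition and the same admissible transformation $\tilde y_i=y_i+c_iy_i^{n+k+1}$ yielding $\tilde c_{ij}=c_{ij}-k(c_i^0-a_{ij}^{-n}c_j^0)$. Your added observation that the logarithmic correction enters only at order $y_i^{n+k}=o(y_i^n)$ is precisely the point that makes the reduction to Lemma \ref{nullcoh} work, so nothing is missing.
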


\begin{proof}
It is similar to that of Lemma \ref{nullcoh}. 
\end{proof}

\begin{lemma} Assume that the $y_i$'s satisfy the relation  (\ref{aijcocyclelog}) with $n>k$ multiple of $k$.
After an admissible transformation  $\tilde y_i=y_i+c y_i^{n+1}$ with $c\in\C$,
one can assume that the class $[c_{ij}]\in H^1( C, {N_C}^{\otimes -n})$ is zero.
\end{lemma}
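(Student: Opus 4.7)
The strategy mirrors Lemma \ref{lem:unicocykill}: write down how the admissible change $\tilde y_i = y_i + c y_i^{n+1}$, with $c\in\C$, moves the class $[c_{ij}]$, and show that the resulting motion is a nonzero linear map from the parameter $\C$ to $H^1(C,N_C^{\otimes -n})$. Because $n$ and $k$ are both multiples of $m=\ord(N_C)$, the bundle $N_C^{\otimes -n}$ is trivial, so $H^1(C,N_C^{\otimes -n})\cong H^1(C,\mc O_C)\cong\C$. Moreover, this line is generated by the Ueda class $[b_{ij}]$ (nonzero by hypothesis), so it will suffice to exhibit a nonzero multiple of $[b_{ij}]$ in the family $[\tilde c_{ij}]-[c_{ij}]$.

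Concretely, I plan to Taylor-expand $\tilde y_i^k=y_i^k(1+cy_i^n)^k$ to the relevant order, which gives
$$\frac{1}{\tilde y_i^k}=\frac{1}{y_i^k}-kc\,y_i^{n-k}+O(y_i^{2n-k}),\qquad \log(\tilde y_i^k)=\log(y_i^k)+kc\,y_i^n+O(y_i^{2n}),$$
and substitute into the left-hand side of (\ref{aijcocyclelog}). Two corrections appear at order $y_i^n$: a reciprocal contribution $-kc(y_i^{n-k}-y_j^{n-k})$ and a logarithmic contribution $-\lambda kc(y_i^n-y_j^n)$. The logarithmic term turns out to be harmless at this order: from (\ref{aijcocyclelog}) one extracts $y_j^k=y_i^k(1+b_{ij}y_i^k+\cdots)$, and since $n$ is a positive multiple of $k$, raising gives $y_j^n-y_i^n=O(y_i^{n+k})$.

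For the reciprocal contribution, raising the expansion of $y_j^k$ above to the positive integer power $(n-k)/k$ yields
$$y_j^{n-k}-y_i^{n-k}=\frac{n-k}{k}\,b_{ij}\,y_i^n+O(y_i^{n+1}).$$
Reading off the new cocycle one gets $\tilde c_{ij}=c_{ij}+(n-k)c\,b_{ij}$, so in $H^1(C,\mc O_C)$ one has $[\tilde c_{ij}]=[c_{ij}]+(n-k)c\,[b_{ij}]$. Since $n>k$ and $[b_{ij}]\neq 0$, choosing $c$ so that $(n-k)c[b_{ij}]=-[c_{ij}]$ kills the class, as required.

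The main technical point to watch is the order bookkeeping, in particular the verification that the logarithmic correction does not contaminate the $y_i^n$ coefficient. This rests on the two standing hypotheses that $n$ is a multiple of $k$ and that the locally constant cocycle $a_{ij}$ satisfies $a_{ij}^k=1$ (so that after every reshuffle the exponents line up properly). Once that is secured, the argument is purely cohomological and closes in the same one-dimensional $H^1(C,\mc O_C)$ as in Lemma \ref{lem:unicocykill}.
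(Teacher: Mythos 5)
Your proof is correct and follows exactly the route the paper intends: the paper's own proof of this lemma is just the remark that it is ``similar to that of Lemma \ref{lem:unicocykill}'', and your computation is the explicit version of that, including the one genuinely new point (checking that the logarithmic term only contributes at order $y_i^{n+k}$, hence does not disturb the $y_i^n$ coefficient). The only cosmetic discrepancy is the constant in front of $c\,b_{ij}$ ($(n-k)$ for you versus $k(n-k)$ in the paper's Lemma \ref{lem:unicocykill}), which is immaterial since either is nonzero for $n>k$.
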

 
\begin{proof}It is similar to that of Lemma \ref{lem:unicocykill}.
\end{proof}

When passing to the limit, one gets a collection of formal submersions $y_i$ satisfying  
\begin{equation}\label{eq:normallocalfirstintegrals}
\left(\frac{1}{{y_i}^k}-\lambda\log{{y_i}^k}\right)-\left(\frac{1}{{y_j}^k}-\lambda\log{{y_j}^k}\right)=b_{ij}.
\end{equation}
Finally, since we have chosen $(b_{ij})$ locally constant, we get after derivation a global closed meromorphic $1$-form 
$$\omega:=\frac{dy_i}{{y_i}^{k+1}}+\lambda \frac{d y_i}{y_i}.$$
In particular, the equation $\omega=0$ defines a regular formal foliation $\F$ having $C$ as a leaf;
a collection of regular formal first integrals is given by the submersions $y_i$.

\subsection{Proof of Theorem \ref{thm:feuilletagesformels} }\label{SS:infinitefoliations}
We now discuss the non unicity of our construction. 
Recall that we are assuming that $N_C$ is torsion, but $C$ is not 
the fiber of a fibration like in Theorem \ref{thm:feuilletagesformels}.
For each choice of $(b_{ij})\in H^1(C,\C)$ which is not zero in $H^1(C,\mc O_C)$,
we have constructed a formal closed meromorphic $1$-form $\omega\in\closed$ with a pole of order $k+1>1$ along $C$, $k$ being the Ueda type $\utype (U,C)$.
Moreover, formula (\ref{eq:normallocalfirstintegrals}) shows that the canonical morphism $\sigma_\omega$ is given by $\frac{1}{k}(b_{ij})$
through the natural isomorphism $H^1(C,\C)\simeq \Hom({\lattice},\C)$.
By this way, we have realized every morphism in $\Hom({\lattice},\C)$ except those given by periods of holomorphic $1$-forms on $C$.
From obvious dimensional reasons, the morphism $\sigma$ is thus surjective which, together with Corollary \ref{cor:noperiodfibration}, proves the first statement
of Theorem \ref{thm:feuilletagesformels}.

Note that the periods of holomorphic $1$-forms are realized by elements of $\closed$ having a lower order pole along $C$
(possibly free of poles). Indeed, let $\omega_\infty$ be generating this one dimensional subspace of $\closed$:
$$\ker\left(\closed\simeq \Hom({\lattice},\C)\simeq H^1(C,\C)\to H^1(C,\mc O_C)\right)=\C\cdot\omega_\infty.$$

One can write $\omega_\infty=\omega-\eta$ where $\omega,\eta\in\closed$ have a pole of order $k+1$ along $C$. Suppose that $\omega_\infty$ has also a pole of order $k+1$ and let $\F_\infty\in\Folf$ be the corresponding foliation. The holonomy representation of $\F_\infty$ has the form given by item (2) of Proposition \ref{prop:holonomyperiods} but this latter clearly implies that $\utype (U,C) > k$: a contradiction.

Finally, choose $\omega_0\in\closed\setminus\C\cdot\omega_\infty$ such that its period vanish 
along $m$ (or equivalently, any non zero multiple of $m$) $\in\lattice$.


\section{Bifoliated classification}\label{S:BifClass}

A {\bf bifoliated neighborhood} is a $4$-uple $(U,C,\F,\G)$ where $(U,C)$ is a germ of neighborhood
of the elliptic curve $C$ and $\F,\G\in\Fol$ are distinct foliations having $C$ as a common leaf.
Another $4$-uple $(\tilde U,C,\tilde\F,\tilde\G)$ will be said equivalent to $(U,C,\F,\G)$ if there is an isomorphism
$\phi:(U,C)\to(\tilde U,C)$ between germs of neighborhoods satisfying (\ref{eq:equivNeighbId})
and conjugating the foliations:
$$\phi_*\F=\tilde\F\ \ \ \text{and}\ \ \ \phi_*\G=\tilde\G.$$
In this section, we consider the classification of bifoliated neighborhoods up to equivalence,
and show that it reduces to the classification of the pair of holonomy representations
$(\rho_\F,\rho_\G)$ up to some equivalence (see Theorem \ref{P:holaffstruct}).
All constructions and results will be valid and usefull both in the formal and analytic settings.
In fact, it will be applied to the canonical pair of formal foliations $(\F_0,\F_\infty)$ to deduce
the formal classification of neighborhoods $(U,C)$. We will also make use of the analytic setting,
not necessarily with the canonical pair, to 
construct huge families of analytic neighborhoods that are all formally equivalent, but pairwise not analytically equivalent.
Even in the non torsion case, our construction provides new examples
with formal but divergent fibration transversal to $C$. For simplicity, we will work in the analytic 
setting; we will just mention its formal counterpart in the main statements, the proof being exactly the same.

\subsection{Basic invariants}\label{sec:BifBasicInv}
A first invariant is given by the {\bf holonomy representation} 
of the common leaf $C$ for both foliations
$$\rho_{\F},\rho_{\G}\ :\ \pi_1(C,p)\to\Diff$$
each representation being defined up to conjugacy in $\Diff$
(see section \ref{sec:folhol}).
In the sequel, it will be convenient to define holonomy on a transversal curve $(T,y)$:
$T\subset(U,C)$ is a germ of curve at a point $p\in C$ transversal to $C$
and $y:(C,p)\to(\C,0)$ a coordinate. The holonomy of $\mathcal F$ on the transversal  $(T,y)$
is defined as follows. Let $f:(U,p)\to(\C,0)$ be the unique first integral for $\F$ whose restriction 
to $T$ coincides with the coordinate $y$: then define the holonomy of $\F$ as the monodromy of $f$
by setting $f_0=f$ in the construction of section \ref{sec:folhol}.
This allow us to compute and compare if necessary the two holonomy representations $\rho_{\F},\rho_{\G}$ on 
a same transversal $(T,y)$.

A second invariant is the {\bf order of tangency}  $\Tang(\F,\G)$ between $\F$ and $\G$ along $C$:
given non vanishing $1$-forms $\alpha$ and $\beta$ at the neighborhood of a point $p\in C$
defining the two foliations $\F$ and $\G$ respectively, then $\alpha\wedge\beta$ vanishes along $C$; 
being locally constant, the vanishing order, denoted by $\Tang(\F,\G)$ does not depend on the choice of $\alpha$ and $\beta$, is globally well-defined and is a positive integer. Let $k\in\N$ be such that  $\Tang(\F,\G)=k+1$.
The next proposition shows that this order of tangency can be detected from the knowledge of holonomy representations.

\begin{prop}\label{P:Tanghol}
Let $\F,\G\in\Fol$ and consider the corresponding holonomy representations $\rho_\F, \rho_\G$ evaluated on the same transversal $(T,y)$. Then the following properties are equivalent.
\begin{enumerate}
\item $\Tang(\F,\G)=k+1.$
\item $\rho_\F$ and $\rho_\G$ coincide to order $k$ in the variable $y$ but differ at order $k+1$.
\end{enumerate}
In particular, two foliations having the same holonomy must coincide.

\end{prop}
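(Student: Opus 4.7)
The plan is to reduce to a local ODE computation near an arbitrary point of $C$. First, I work in local holomorphic coordinates $(x,y)$ on a neighborhood $V$ of some $p\in C$, with $C\cap V=\{y=0\}$ and the transversal $T$ (possibly shifted along $C$; this only conjugates both holonomies by a common element of $\Diff$) equal to $\{x=0\}$. Because $C$ is a leaf of both $\F$ and $\G$, one can write
$$\omega_\F=dy+y\,a_\F(x,y)\,dx,\qquad \omega_\G=dy+y\,a_\G(x,y)\,dx$$
for unique $a_\F,a_\G\in\OO(V)$, and a direct computation gives
$$\omega_\F\wedge\omega_\G=y\bigl(a_\G-a_\F\bigr)\,dx\wedge dy.$$
Writing $a_\F(x,y)=\sum_{j\ge 0}a_\F^{(j)}(x)\,y^j$ (and similarly for $\G$), the equality $\Tang(\F,\G)=k+1$ is thus equivalent to $a_\F^{(j)}\equiv a_\G^{(j)}$ for $j<k$ (which is a coordinate-invariant global statement on $C$, since the vanishing order of $\omega_\F\wedge\omega_\G$ is locally constant on a connected curve) together with $a_\F^{(k)}\not\equiv a_\G^{(k)}$ as a global section of the relevant line bundle on $C$.

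Second, I compute holonomy by integrating an ODE. The leaf of $\F$ through $(0,y_0)\in T$ is the graph of the solution of $Y'=-Y\,a_\F(x,Y)$ with $Y(0)=y_0$. Expanding $Y_\F(x;y_0)=\sum_{n\ge 1}y_0^n\,\psi_n^\F(x)$ and substituting in the equation, order-by-order identification in $y_0$ yields a triangular system
$$(\psi_n^\F)'=-a_\F^{(0)}\,\psi_n^\F+R_n\bigl(\psi_1^\F,\dots,\psi_{n-1}^\F\,;\,a_\F^{(0)},\dots,a_\F^{(n-1)}\bigr),$$
in which $\psi_n^\F$ depends only on $a_\F^{(0)},\dots,a_\F^{(n-1)}$ (and on the previously determined $\psi_i^\F$). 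Covering a loop $\gamma\subset C$ by finitely many such charts, the holonomy $\rho_\F(\gamma)\in\Diff$ on $T$ is obtained by composing the local flows; the power-series coefficients of this composite are themselves determined, order by order, by the collection $\{a_\F^{(j)}\}_{j<n}$ read along $\gamma$, and the same statement holds for $\G$. Hence if $a_\F^{(j)}\equiv a_\G^{(j)}$ for $j<k$ then $\rho_\F(\gamma)$ and $\rho_\G(\gamma)$ agree modulo $y^{k+1}$ for every $\gamma\in\pi_1(C)$, giving one half of both implications.

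Finally, I pin down the $y^{k+1}$-coefficient. The recursion at order $n=k+1$ shows that $\psi_{k+1}^\F-\psi_{k+1}^\G$ satisfies a linear inhomogeneous ODE whose forcing term is proportional to $a_\F^{(k)}-a_\G^{(k)}$; integrating along $\gamma$, the difference after traversal is a non-trivial linear functional of $a_\F^{(k)}-a_\G^{(k)}$, essentially the pairing of this non-zero section with the cycle $\gamma$. Since a non-zero linear functional on $\pi_1(C)\simeq\lattice$ cannot vanish on every cycle, some $\gamma$ detects the difference, completing $(1)\Rightarrow(2)$; the converse is the same analysis run backward (take the least $j_0$ for which $a_\F^{(j_0)}\not\equiv a_\G^{(j_0)}$ and exhibit a $\gamma$ on which the holonomies first disagree at order $j_0+1$, forcing $j_0=k$). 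The last sentence of the proposition is the limiting case: equal holonomies force $a_\F^{(j)}=a_\G^{(j)}$ for all $j$, hence $\omega_\F=\omega_\G$ near every point of $C$, hence $\F=\G$. The main technical hurdle will be the non-vanishing step in this last paragraph: identifying $a_\F^{(k)}-a_\G^{(k)}$ as a genuine non-zero global section on $C$ and verifying that the resulting linear functional on $\lattice$ is not identically zero on the cycles.
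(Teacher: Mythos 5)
Your reduction to the local ODE and the triangular recursion is sound, and it correctly gives the easy half of the statement: if $a_\F^{(j)}=a_\G^{(j)}$ for all $j<k$ then the two holonomies agree modulo $y^{k+1}$. But there is a genuine gap exactly at the step you defer to the end, and the claim you make there is false as stated. Define $u(\gamma)$ by $\rho_\G(\gamma)=(y+u(\gamma)y^{k+1})\circ\rho_\F(\gamma)\ \mathrm{mod}\ y^{k+2}$ (this is the order-$(k+1)$ discrepancy your integral computes). Since $\rho_\F$ and $\rho_\G$ are group homomorphisms with the same linear part $a_\gamma$, one finds $u(\gamma+\gamma')=u(\gamma)+a_\gamma^{-k}u(\gamma')$: this is a cocycle twisted by the monodromy of $N_C^{\otimes -k}$, and it is a linear functional on $\pi_1(C)$ only when $a_\gamma^{k}\equiv 1$. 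The paper explicitly warns about this point right after (\ref{eq:CompatibilityOrderk}) (``Mind that $\gamma\mapsto\phi_\gamma$ is not a group morphism''). So the sentence ``a non-zero linear functional on $\lattice$ cannot vanish on every cycle'' does not apply in general. Independently of that, you have not shown the functional is non-trivial: for this you must first identify $a_\F^{(k)}-a_\G^{(k)}$ as a nowhere-vanishing flat section, i.e.\ of the form $c\,e^{\theta x}$ in a uniformizing coordinate (this already uses the residue theorem, applied to its logarithmic derivative, and forces $N_C^{\otimes k}\simeq\OO_C$), and then check that the cocycle $\gamma\mapsto c(e^{\theta\gamma}-1)$ (resp.\ $c\gamma$ when $\theta=0$) cannot vanish on the whole rank-two lattice $\lattice$ --- for instance because $\theta\lattice\subset 2i\pi\Z$ forces $\theta=0$.

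The paper closes this gap by a different device: it puts the pair of first integrals in the normal form $f=g+a\,g^{k+1}$, observes that the germs $a\vert_C$ glue into an affine structure on $C$, and notes that if the holonomies agreed at order $k+1$ the developing map $\xi$ of this structure would be a non-constant multivalued holomorphic function with purely multiplicative monodromy and a zero, contradicting the residue theorem applied to $d\xi/\xi$. Either that argument or the explicit dichotomy on $\theta$ sketched above is indispensable; without one of them your proof of $(1)\Rightarrow(2)$, of the converse, and of the final assertion ($\rho_\F=\rho_\G\Rightarrow\F=\G$) is incomplete.
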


\begin{proof}
Assume that $\Tang(\F,\G)=k+1$. This implies that $\F$ and $\G$ coincide on restriction to the $k^{th}$ infinitesimal 
neighborhood $C_{(k)}= \mbox{Spec}\ {\mc O}_U /{{\mc I}_C}^{k+1}$; in particular, $\rho_\F$ and $\rho_\G$ coincide 
up to order $k$. Let $f_p, g_p$ two respective first integrals of $\F$ and $\G$ such that $\{f_p=0\}=\{g_p=0\}$ 
are reduced equation of $C$ and such that $f_p=g_p$ on $C_{(k)}$. 
From the hypothesis on $\Tang(\F,\G)$, this means that $f_p= g_p +a_p {g_p}^{k+1}$ where $a_p$ is a formal function
whose restriction  ${a_p}_{|C}$ to $C$ is a non constant holomorphic function. Let ${f_p}', {g_p}'$ be sharing the same 
properties. Hence, they are related to $f_p$ and $g_p$ by ${f_p}'= \phi (f_p)$, ${g_p}'=\psi (g_p)$ 
where $\phi,\psi\in\Diff$ and $\phi (y)=\psi (y)\  \mbox{mod}\ y^{k+1}$. It is then easy to verify that ${f_p}'= {g_p}' +{a_p}' {g_p}^{k+1}$ whith ${{a_p}'}_{|C}=\tau\circ {a_p}_{|C}$ where $\tau$ is an affine transformation of the complex line. 
As a byproduct, the collection of ${a_p}_{|C}$, when varying the base point $p$, define an (maybe branched) 
\textit{affine structure} on $C$ intrinsically attached to the pair $(\F,\G)$. Now, let us fix $p_0\in C$,  
pick a germ of transversal $(T,y)$ at $p_0$ and let $f,g$ the local first integrals of $\F$ and $\G$ which coincide 
with $y$ on $T$. Let $a$ such that  $f= g+ag^{k+1}$. Note that $a(p_0)=0$. Let $b_\gamma$ the germ at $ (C,p_0)$ obtained by analytic continuation (well defined by the affine structure) of $a_{|C}$ along $\gamma\in\pi_1(C, p_0)$. 
Clearly, the holonomy representations of $\F$ and $\G$ coincide up to order $k+1$ if and only if $b_\gamma (p_0)=0$ 
for every loop $\gamma$. In this case, there exists on $C$ a multivaluate non constant function $\xi$ locally holomorphic 
with multiplicative monodromy. This gives a contradition with the residue theorem applied to the logarithmic differential 
$\frac{d\xi}{\xi}$.
\end{proof} 

\begin{remark}The statement of Proposition \ref{P:Tanghol} remains valid when considering more generally $n+1$-dimensional bifoliated neighborhoods of $n$-dimensional compact K\"ahler manifolds (use the same arguments).
\end{remark}

\begin{remark}The affine structure exhibited in the proof will reappear in the main Theorem \ref{ThmBifoliatedNeighborhood_k>0} of this section. 
\end{remark}

\subsection{First case: $\Tang(\F,\G)=1$}\label{SS:tan=1}

There is the following {\bf constraint} on the holonomy representations. The foliation $\F$
defines a partial 
connection 
$\nabla_{\F}$ on the normal bundle $N_{\F}$
called {\bf Bott partial connection} whose restriction to the leaf $C$ is a flat connection on the normal bundle $N_C\simeq {N_\F}_{|C}$. The monodromy representation of ${\nabla_{\F}}_{|C}$ is given 
by the linear part of the holonomy. 
Comparing 
$\nabla_{\F}$ and $\nabla_{\G}$ on $N_C$, we get that there exists
a holomorphic $1$-form $\omega$ on $C$, namely $\omega={\nabla_{\F}}_{|C}-{\nabla_{\G}}_{|C}$ such that 
\begin{equation}\label{CompatibilityDiscrepency}
\mathrm{lin}\left(\rho_{\G}(\gamma)\right)=\exp\left({\int_\gamma\omega}\right)\cdot\mathrm{lin}\left(\rho_{\F}(\gamma)\right)
\end{equation}
for all $\gamma\in\pi_1(C,p)$. As we will see, we have
$$\Tang(\F,\G)=1\ \ \ \Leftrightarrow \ \ \ \omega\not=0$$
and in this case $\F$ and $\G$ are transversal outside $C$.

\begin{thm}\label{ThmBifoliatedNeighborhood} Case $\Tang(\F,\G)=1$.
\begin{itemize}
\item {\bf Classification} Two analytic/formal bifoliated neighborhoods $(U,C,\F,\G)$  and $(\tilde U,C, \tilde{\F},\tilde{\G})$ (both with $k=0$) are analytically/formally equivalent if, and only if, there 
exist analytic/formal diffeomorphisms $\phi,\psi\in\Diff / \Difffor$ such that for all $\gamma\in\pi_1(C,p)$
\begin{equation}\label{eq:Bifoliatedk=0}
 \left\{\begin{matrix}\phi\circ\rho_{\tilde{\F}}(\gamma)=\rho_{{\F}}(\gamma)\circ\phi\\ \psi\circ\rho_{\tilde{\G}}(\gamma)=\rho_{{\G}}(\gamma)\circ\psi
\end{matrix}\right.
\end{equation}
\item {\bf Realization} Given two representations $\rho_{\F},\rho_{\G}$ satisfying the compatibility rule (\ref{CompatibilityDiscrepency}) for a non zero $1$-form $\omega$ on $C$, there is a unique (up to isomorphism) bifoliated neighborhood $(U\supset C,\F,\G)$
realizing these invariants with $k=0$.
\end{itemize}
\end{thm}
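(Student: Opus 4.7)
The plan is to establish realization first, then derive classification from uniqueness of the realization. Necessity of (\ref{eq:Bifoliatedk=0}) is immediate: any equivalence $\Phi:(U,C,\F,\G)\to(\tilde U,C,\tilde\F,\tilde\G)$ restricts to a single germ $\tilde\phi\in\Diff$ (or $\Difffor$) on a common transversal $(T,y)$, simultaneously conjugating both holonomy representations by functoriality, so one may take $\phi=\psi=\tilde\phi$.

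For realization, I would first suspend $\rho_\F$: on $\tilde U=\C\times(\C,0)$ with coordinates $(x,y)$, let $\lattice$ act by $\gamma\cdot(x,y)=(x+\gamma,\rho_\F(\gamma)(y))$ and let $\tilde\F$ be $\{dy=0\}$. Quotienting gives $(U,C,\F)$ with the prescribed holonomy on the transversal $T$, image of $\{x=0\}$. To realize $\G$ on the same $(U,C)$, I would construct a $\lattice$-equivariant regular foliation $\tilde\G$ on $\tilde U$ having $\tilde C=\{y=0\}$ as a leaf and transversal to $\tilde\F$ off $\tilde C$; equivalently, a first integral $\tilde g(x,y)=\alpha(x)y+O(y^2)$ satisfying the equivariance relation
\begin{equation*}
\tilde g(x+\gamma,\rho_\F(\gamma)(y))=\rho_\G(\gamma)^{-1}(\tilde g(x,y)),\qquad\gamma\in\lattice.
\end{equation*}
At first order this reduces to a multiplicative cocycle relation $\alpha(x+\gamma)/\alpha(x)=1/(a_\gamma b_\gamma)$, which is solvable explicitly thanks to (\ref{CompatibilityDiscrepency}); the natural choice is $\alpha(x)=\exp\bigl(\int_0^x\tilde\eta\bigr)$ with $\tilde\eta$ a lift to $\C$ of a specific holomorphic 1-form built from $\omega$ and the linear part of $\rho_\F$. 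Subsequent orders are solved inductively: each order yields an inhomogeneous $\lattice$-cocycle equation whose multiplicative part defines a flat line bundle on $C$, and the hypothesis $\Tang(\F,\G)=1$ (equivalently $\omega\neq 0$) ensures this bundle is non-trivial, so the relevant $H^1$ vanishes by Serre duality, much as in section \ref{sec:ExistencePencilTorsion}.

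For sufficiency, given $\phi,\psi$ as in (\ref{eq:Bifoliatedk=0}), I would conjugate $\tilde\F,\tilde\G$ by $\phi,\psi$ respectively on the transversal so that the two bifoliated neighborhoods share the same holonomy pair, then apply the uniqueness half of realization: two bifoliated neighborhoods with identical holonomy pairs on a common transversal admit a canonical equivalence obtained by matching $\F$-first integrals on $T$ and extending via the suspension structure, and this equivalence automatically carries $\G$ to $\tilde\G$ because $\tilde g$ is uniquely determined once its restriction to $T$ is fixed. The main difficulty lies in the order-by-order inductive construction of $\tilde g$ and in verifying the vanishing of the cohomological obstructions; this is where the $\Tang=1$ hypothesis enters critically, because without it (the case treated in the subsequent Theorem \ref{ThmBifoliated}) the flat line bundles governing the obstructions can become trivial and produce genuine invariants, reflected in the additional condition $\phi=\psi\mod y^{k+2}$ appearing there.
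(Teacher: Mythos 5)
Your overall architecture (suspend $\rho_{\F}$, then build a $\lattice$-equivariant first integral for $\G$ order by order in $y$, then derive classification from uniqueness of the realization) is genuinely different from the paper's, which instead establishes a local normal form (Lemma \ref{LemNormFormBifol}: $f=\tilde y$, $g=e^{u(\tilde x)}\tilde y$ in a unique coordinate system fixing $C$ pointwise) and uses its existence-and-uniqueness twice --- to patch local conjugations $\phi_\gamma$ into a group action for the realization, and to globalize a pointwise conjugacy for the classification. The difference matters, because your route has a genuine gap.

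The gap is in the inductive step of the realization. Writing $\tilde g=\sum_{n\ge1}g_n(x)y^n$, the equivariance relation at order $n$ takes the form $g_n(x+\gamma)=\chi_n(\gamma)\,g_n(x)+h_{n,\gamma}(x)$ with $\chi_n(\gamma)=b_\gamma a_\gamma^{-n}$ (where $a_\gamma,b_\gamma$ are the linear parts of $\rho_{\F},\rho_{\G}$), so the obstruction lives in $H^1(C,L_{\chi_n})$, and $L_{\chi_n}$ is, up to the exponential (hence holomorphically trivial) twist coming from $\omega$, the flat bundle $N_C^{\otimes(1-n)}$. Your claim that $\omega\neq0$ forces this bundle to be non-trivial is false: the hypothesis $\Tang(\F,\G)=1$ has no bearing on $N_C^{\otimes(n-1)}$, which is trivial whenever $n\equiv1 \bmod \ord(N_C)$ --- in particular at \emph{every} order $n$ when $N_C=\OO_C$, the most basic case covered by the theorem. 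At all such orders $H^1(C,\OO_C)\cong\C$ does not vanish and you must prove that the obstruction class itself is zero; no argument is given. (There is also a convention inconsistency in your equivariance relation: with $\rho_{\F}(\gamma)$ acting on the source and $\rho_{\G}(\gamma)^{-1}$ on the target, already the first-order equation $\alpha(x+\gamma)/\alpha(x)=1/(a_\gamma b_\gamma)$ admits no nowhere-vanishing entire solution unless $N_C^{\otimes2}\cong\OO_C$; with consistent conventions the ratio is $b_\gamma/a_\gamma=e^{\theta\gamma}$ and $\alpha=e^{\theta x}$, recovering the paper's model $g=e^{\theta x}y$.) The paper sidesteps all cohomology: the compatibility rule (\ref{CompatibilityDiscrepency}) guarantees that the local invariant $e^{u}$ of Lemma \ref{LemNormFormBifol} matches at corresponding points, so the local conjugations exist and, by the uniqueness clause, glue into the global $\phi_\gamma$. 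Your classification step inherits the problem, since it rests on the uniqueness half of the realization; note also that you never check that the discrepancy $1$-forms $\omega$ and $\tilde\omega$ of the two neighborhoods coincide, which the paper deduces from the equality of their exponentiated periods before invoking the normal form and Koenigs' linearization of the non-unitary holonomy.
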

\begin{remark}
In the statement of Theorem \ref{ThmBifoliatedNeighborhood}, we do not demand that the holonomy pairs are evaluated with respect  to the same transversal, contrary to the higher tangency case (see Theorem \ref{ThmBifoliatedNeighborhood_k>0}). Moreover, as one of the conjugated representations, says $\rho_\G,\rho_{\tilde\G}$ is linearizable (see proof of Theorem \ref{ThmBifoliatedNeighborhood}), we can impose to the diffeomorphisms $\phi,\psi$ involved in \ref{eq:Bifoliatedk=0} to have the same linear part. This corresponds to the statement of Theorem \ref{ThmBifoliated} of the introduction for $k=0$.
\end{remark}

\begin{remark} One can more generally consider bifoliated neighborhoods of any compact curve $C$ regardless of its genus $g=g(C)$. When $g>1$, the $1$-form $\omega$ vanishes at some points
meaning that we have other tangencies between $\F$ and $\G$. This gives rise
to additional invariants, even locally (see \cite{Olivier}).
\end{remark}

Before giving the proof of theorem \ref{ThmBifoliatedNeighborhood}, let us start by some useful remarks. 

Let $\F,\G$ be foliations fullfilling the hypothesis of \ref{SS:tan=1}.  In local coordinates $(x,y)$ at $p$, where $C=\{y=0\}$,
we can choose the $1$-forms defining $\F$ and $\G$ uniquely as follows
$$\left\{\begin{matrix}\alpha=dy+ya(x,y)dx\hfill \\ \beta= dy+yb(x,y)dx\end{matrix}\right.
\ \ \ \text{with}\ a,b\in\C\{x,y\}.$$
Then, the holomorphic $1$-form defined on $C$ by 
$$\omega:=[b(x,0)-a(x,0)]dx=\mbox{Res}_C (\dfrac{\beta\wedge\alpha}{y^2})$$ 
is not identically vanishing, and does not depend on the choice
of coordinates $(x,y)$. This is an invariant of the pair of foliations measuring
the discrepancy between them at the first order along $C$. 
We thus get a global non zero holomorphic $1$-form $\omega$ on $C$, 
which is precisely the $1$-form involved in (\ref{CompatibilityDiscrepency}). Indeed, one has ${\nabla_{\F}}_{|C}=d-a(x,0)dx$ and ${\nabla_{\G}}_{|C}=d-b(x,0)dx$.
Since $C$ is an elliptic curve, it follows that $\omega$ does not vanish at all, and $\F$ is therefore
transversal to $\G$ outside of $C$.
In order to prove Theorem \ref{ThmBifoliatedNeighborhood}, we need 
the following local classification result, which is a version of \cite[Lemma 5]{FrankRamis} for functions.

\begin{lemma}\label{LemNormFormBifol}
Let $f,g\in\C\{x,y\}$ be two local reduced equations for $C=\{y=0\}$
and assume that the zero divisor of $df\wedge dg$ is $C$ (hence also reduced).
Then, there is a unique system of local coordinates $(\tilde x,\tilde y)$
satisfying
$$f=\tilde y\ \ \  g=e^{u(\tilde x)}\tilde y\ \ \ \text{and}\ \ \ (\tilde x,\tilde y)\vert_C=(x,0)$$
for a submersive function $u$ unique modulo $2i\pi$.
\end{lemma}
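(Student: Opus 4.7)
Since $f$ and $g$ are both reduced equations for $C=\{y=0\}$, write $f=y\alpha(x,y)$ and $g=y\beta(x,y)$ with $\alpha,\beta\in\C\{x,y\}$ nonvanishing at the origin. A direct expansion gives
\[
df\wedge dg \;=\; -\,y\,\alpha^2\,\partial_x\!\left(\tfrac{\beta}{\alpha}\right)\,dx\wedge dy \;+\; O(y^2),
\]
so the assumption that the divisor of $df\wedge dg$ is reduced and equal to $C$ translates exactly to the condition that the holomorphic function $\phi(x):=(g/f)(x,0)=\beta(x,0)/\alpha(x,0)$ be submersive at $x=0$. This reformulation is the only analytic content of the hypothesis and unlocks everything that follows.

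\textbf{Construction.} Put $\tilde y:=f$, and define $\tilde x$ by requiring it to be constant along the fibers of $g/f$ and to coincide with $x$ on $C$. Concretely, by the previous paragraph $\phi$ is a local biholomorphism at $0$, and I set $\tilde x:=\phi^{-1}\circ(g/f)$. Choosing any local branch of the complex logarithm at $\phi(0)\neq0$, define $u(\tilde x):=\log\phi(\tilde x)$; by construction $g/f=e^{u(\tilde x)}$, hence $g=e^{u(\tilde x)}\,\tilde y$, and $u$ is submersive because $\phi$ is. The restriction condition $(\tilde x,\tilde y)|_C=(x,0)$ is immediate from the definitions. Finally, a short Jacobian computation at the origin yields
\[
d\tilde x\wedge d\tilde y\,\big|_0 \;=\; \alpha(0,0)\,dx\wedge dy \;\neq\; 0,
\]
so $(\tilde x,\tilde y)$ really is a system of holomorphic coordinates near the origin.

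\textbf{Uniqueness.} Let $(\tilde x',\tilde y',u')$ be another triple satisfying the conditions. The equation $\tilde y'=f=\tilde y$ is forced. From $e^{u'(\tilde x')}=g/f=e^{u(\tilde x)}$ one gets $u'(\tilde x')-u(\tilde x)\in 2\pi i\,\Z$ locally, and by continuity this difference is a fixed element of $2\pi i\,\Z$. Absorbing it into the stated $2\pi i$ ambiguity of $u$, and using that $u'$ is injective (being submersive), one deduces $\tilde x'=\tilde x$. The main obstacle, such as it is, is paragraph one: once the tangency-divisor hypothesis is rephrased as submersivity of $g/f|_C$, the lemma reduces to a routine implicit-function construction.
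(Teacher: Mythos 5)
Your proof is correct and follows essentially the same route as the paper's: set $\tilde y=f$, observe that the hypothesis on $df\wedge dg$ is exactly the statement that $(g/f)|_C$ is submersive, and define $\tilde x$ by composing $g/f$ with the inverse of its restriction to $C$ (the paper writes this via the expansion $g=\sum_n g_n(x)\tilde y^n$ and the coordinate $\varphi=g_1-c$, which yields the identical map $\tilde x=\varphi^{-1}(g/\tilde y-c)$). Your uniqueness paragraph is slightly more explicit than the paper's, but the content is the same.
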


The invariant $u(x)$ is given by the limit of the ratio of the two functions:
$$e^{u(x)}\vert_C:=\frac{g(x,y)}{f(x,y)}\vert_{y=0}.$$
For the corresponding foliations, the invariant $1$-form discussed above is $\omega:=du$.
As noticed above, it can also be interpreted as the Poincar\'e residue of the $2$-form 
$\frac{dg}{g}\wedge\frac{df}{f}$ along $C$.

In the Lemma, we have only considered changes of coordinates fixing $C$ pointwise
since this is what we need for the proof of Theorem \ref{ThmBifoliatedNeighborhood}.
On the other hand, if we just ask that the change of coordinates preserve $C$
globally, then we arrive at the unique normal form
$$f=\tilde y\ \ \  g=(c+\tilde x)\tilde y$$
where $c:=e^{u(0)}$ is a non zero constant.

\begin{proof}Let us first set $\tilde y:=f$ and expand 
$$g=g_1(x)\tilde y+g_2(x)\tilde y^2+g_3(x)\tilde y^3+\cdots$$
Then $df\wedge dg=-(\sum_{n>0}g_n'(x)\tilde y^n) dx\wedge d\tilde y$ and we get $g_1'(x)\not=0$;
in fact, $g_1(x)=u(x)$.
We can write $g_1(x)=c+\varphi(x)$ with $c=g_1(0)\not=0$ and $\varphi(x)$ a local coordinate.
Then $g=(c+\phi(x,\tilde y))\tilde y$ where
$$\phi(x,\tilde y)=\varphi(x)+\sum_{n>0}g_{n+1}(x)\tilde y^n=\frac{g}{\tilde y}-c.$$
Clearly, $(\tilde x,\tilde y):=(\phi(x,\tilde y),\tilde y)$ is the unique system of coordinates putting $g$ 
into the normal form
$g=(c+\tilde x)\tilde y$. Finally, the coordinate $\varphi^{-1}\circ\phi$ induces the identity on $C$
and conjugates $g$ to $\exp(\log(c+\varphi(\tilde x)))\tilde y$.
\end{proof}

\begin{proof}[Proof of Theorem \ref{ThmBifoliatedNeighborhood}] Given $(U,C,\F,\G)$ and $(\tilde U,C,\tilde{\F},\tilde{\G})$
together with $k=0$ and satisfying \ref{eq:Bifoliatedk=0}. Choose a point $p\in C\subset U,\tilde U$. Maybe after changing first integrals by left composition in $\Diff$:
$(\tilde f,\tilde g)\to(\phi\circ\tilde f,\psi\circ\tilde g)$, we may assume that 
$\rho_{\F}= \rho_{\tilde{\F}}$ and $\rho_{\G}= \rho_{\tilde{\G}}$.

Let $\omega$ and $\tilde\omega$ the discrepancy $1$-forms associated to the pairs 
$(\F, \G)$ and $( \tilde{\F},\tilde{\G})$ respectively.
Since holonomy representations are the same for the two pairs, we must have 
$$\exp\left({\int_\gamma\omega}\right)=\exp\left({\int_\gamma\tilde\omega}\right)$$
for all $\gamma\in\pi_1(C,p)$. It follows that $\omega=\tilde\omega$.

One of the two representations, say  $\rho_{\G}$, must be non unitary.
Therefore, according to Koenigs theorem, it is analytically conjugated to its linear part (see Theorem \ref{thm:Koenigs}). Changing again if necessary $g$ and $\tilde g$,
we can assume that $\rho_{\G}=\rho_{\tilde{\G}}$ is linear. In particular, we can now multiply
$g$ and $\tilde g$ by constants without modifying their holonomy; therefore, we can now assume
that $g/f$ and $\tilde g/\tilde f$ have common limit $1$ at the point $p$.
By consequence, these ratios have the same limit $\exp(u(x))$ on $C$.

Lemma \ref{LemNormFormBifol} says that there is a unique germ of diffeomorphism $\Phi:(U,p)\to(\tilde U,p)$ fixing $C$ pointwise conjugating the two pairs of first integrals.
By analytic continuation of the first integrals, the existence and unicity of $\Phi$ at each point implies the analytic continuation
principle for $\Phi$ along any path in $C$. Having chosen $(\tilde f,\tilde g)$ with the same monodromy as $(f,g)$, we get a globally well-defined
diffeomorphism $\Phi:(U,C)\to(\tilde U,C)$.

The existence part of Theorem \ref{ThmBifoliatedNeighborhood} can be done by quotient,
similarly to the usual construction by suspension when we want to realize a single representation.
The difference is that, instead of preserving a fibration, we preserve a second foliation.
Consider on the universal cover $\C_x\to C$ the trivial line bundle
$\C_x\times\C_y$ together with models 
$$f=y\ \ \ \text{and}\ \ \ g=e^{\theta x}\cdot y$$
where $\omega=\theta dx$ is the $1$-form of the Theorem. For each period $\gamma$,
we want to construct a diffeomorphism $\phi_\gamma$ at the neighborhood of $\tilde C=\{y=0\}\simeq\C_x$ such that
\begin{itemize}
\item $\phi_\gamma$ is preserving $\tilde C$ and inducing the translation by $\gamma$ on it,
\item $\phi_\gamma$ is conjugating $(\rho_{\F}(\gamma)\circ f,\rho_{\G}(\gamma)\circ g)$ with $(f,g)$.
\end{itemize}
Note that $\rho_{\F}(\gamma)$ is a (germ of) diffeomorphism fixing $0$ so that, composed with $f$,
we still get a first integral of the associated foliation at the neighborhood of $\tilde C$, vanishing on $\tilde C$. We deduce the existence of $\phi_\gamma$ from Lemma \ref{LemNormFormBifol} locally at any point.
Indeed, we just have to check that the infinitesimal invariant $e^u$ is the same for
\begin{itemize}
\item $(f,g)$ at $(x+\gamma,0)$ and
\item $(\rho_{\F}(\gamma)\circ f,\rho_{\G}(\gamma)\circ g)$ at $(x,0)$
\end{itemize}
which straightly follows from assumptions. By uniqueness,
the local diffeomorphisms provided by Lemma \ref{LemNormFormBifol} patch together into a global diffeomorphism $\phi_\gamma$.
The map $\gamma\mapsto\phi_\gamma$ defines a group action on the germ of neighborhood of $\tilde C$.
On the quotient, we get a neighborhood $U\supset C$ equipped with two foliations $\F$ and $\G$
having the expected holonomy. Note that invariants $k=0$ and $\omega$ are actually determined by the monodromy.
\end{proof}

\begin{example}\label{ExampleLinear}For linear representations
$$\left\{\begin{matrix}
\rho_{\F}(\gamma)\ :\ y\mapsto a_\gamma y\\
\rho_{\G}(\gamma)\ :\ y\mapsto b_\gamma y
\end{matrix}\right.$$
we get the foliations associated to connections $\nabla_{\F},\nabla_{\G}$
of a (degree zero) line bundle $L\to C$. In this case, the $\phi_\gamma$ constructed in the proof above
are just given by $(x,y)\mapsto (x+\gamma,a_\gamma y)$. Indeed,
$$\phi^*(f,g)=(a_\gamma y,e^{\theta(x+\gamma)}a_\gamma y)=(a_\gamma f,b_\gamma g)$$
(we must have $b_\gamma=e^{\theta\gamma}a_\gamma$ for any period $\gamma$).
In particular, in this case, the fibration defined by the $x$-variable is preserved.
\end{example}

\subsection{Second case: $\Tang(\F,\G)=k+1,\  k>0$ }

As before, we only deal with \textit{analytic} bifoliated neighborhoods in the proofs and leave  the reader to adapt the argument  in the formal setting..
We can choose local coordinates $(x,y)$ so that $y$ is a first integral for $\F$,
i.e. so that 
$$\left\{\begin{matrix}\alpha=dy\hfill \\ \beta= dy+y^{k+1}h(x,y)dx\end{matrix}\right.
\ \ \ \text{with}\ h(x,0)\not\equiv 0.$$ are defining forms for $\F$ and $\G$.
Then, the holomorphic $1$-form $\omega=h(x,0)dx$ is well-defined up to multiplication by a constant.
If we do this with a uniformizing coordinate $x$ on $C$, then the logarithmic derivative of $h(x,0)$ 
defines a logarithmic $1$-form $\eta=\frac{dh}{h}\vert_C$ with only positive residues (zeroes of $h(x,0)$).
By Residue Theorem, $\eta=\theta\cdot dx$ for a constant $\theta\in\C$, 
and 
$$\omega=
c\exp(\theta x)dx$$
for a constant $c\in\C^*$. 
In particular, $\omega$ does not vanish and $\F$ is transversal to $\G$ outside $C$.
The invariant found here is not a global holomorphic $1$-form in general, 
but a global affine structure on $C$: an affine coordinate is given by integration of  $\omega$ (regarded as a holomorphic form on the universal covering)
\begin{equation}\label{Eq:Formuledux}
u(x)=\int_0^x\omega=\left\{\begin{matrix}
c\left(e^{\theta x}-1\right)\ \ \ \text{when}\ \ \theta\not=0\\
cx\hfill \text{when}\ \ \theta=0
\end{matrix}\right.\end{equation}
with monodromy given by
$$u(x+\gamma)=\left\{\begin{matrix}
e^{\theta\gamma}u(x)+c\left(e^{\theta\gamma}-1\right)\\
u(x)+c\gamma
\end{matrix}\right.
$$
(here, we have imposed $u(0)=0$ which is convenient for the sequel).

The version of Lemma \ref{LemNormFormBifol} when $k>0$ reads as follows:

\begin{lemma}\label{LemNormFormBifolBis}
Let $f,g\in\C\{x,y\}$ be two local reduced equations for $C=\{y=0\}$
and assume that the zero divisor of $df\wedge dg$ is $(k+1)[C]$.
Then, there is a unique system of local coordinates $(\tilde x,\tilde y)$
satisfying
$$f=\tilde y\ \ \  g=P(\tilde y) + u(\tilde x)\tilde y^{k+1}\ \ \ \text{and}\ \ \ (\tilde x,\tilde y)\vert_C=(x,0)$$
for unique non vanishing function $u$ on $C$ and degree $\leq k$ polynomial coordinate $P$ 
($P(0)=0$ and $P'(0)\neq 0$).
\end{lemma}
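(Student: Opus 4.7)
The plan is to mimic the proof of Lemma \ref{LemNormFormBifol}, with the additional step of peeling off a polynomial of degree $\leq k$ in $\tilde y$ before solving for the new horizontal coordinate $\tilde x$.

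First I would set $\tilde y := f$; since $f$ is a reduced equation of $C$, this gives a valid coordinate with $\tilde y|_C = 0$. Expanding $g$ as a power series in $\tilde y$ (no constant term, as $g|_C = 0$)
\[
g = g_1(x)\tilde y + g_2(x)\tilde y^2 + g_3(x)\tilde y^3 + \cdots,
\]
a direct computation gives
\[
df \wedge dg = \Bigl(\sum_{n \geq 1} g_n'(x) \tilde y^n \Bigr)\, d\tilde y \wedge dx.
\]
The hypothesis that the zero divisor of $df \wedge dg$ equals $(k+1)[C]$ forces $g_j'(x) \equiv 0$ for $1 \leq j \leq k$ and $g_{k+1}'(0) \neq 0$. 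Hence each $g_j$ with $j \leq k$ is a constant $c_j \in \C$, with $c_1 \neq 0$ because $g$ is a reduced equation of $C$.

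Next, put $P(\tilde y) := c_1 \tilde y + c_2 \tilde y^2 + \cdots + c_k \tilde y^k$, a polynomial of degree $\leq k$ with $P(0)=0$ and $P'(0)=c_1 \neq 0$. Then
\[
g - P(\tilde y) = \tilde y^{k+1}\, h(x, \tilde y), \qquad h(x, \tilde y) := g_{k+1}(x) + g_{k+2}(x)\tilde y + \cdots.
\]
Setting $u(x) := g_{k+1}(x)$, the condition $u'(0) \neq 0$ makes $u$ a local biholomorphism $(\C,0) \to (\C, u(0))$, so I can define the new coordinate
\[
\tilde x := u^{-1}\bigl(h(x, \tilde y)\bigr).
\]
Then $\tilde x|_C = u^{-1}(u(x)) = x$, so $(\tilde x, \tilde y)|_C = (x, 0)$ and $(\tilde x, \tilde y)$ is a genuine coordinate system near the origin. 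The defining relation $u(\tilde x) = h(x, \tilde y)$ then yields $g = P(\tilde y) + u(\tilde x)\tilde y^{k+1}$, as required, and $u$ is not identically zero since $u'(0) \neq 0$.

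For uniqueness, any other such normal form $(\tilde x', \tilde y')$ with data $(P', u')$ has $\tilde y' = f = \tilde y$; equating the two expressions of $g$ gives
\[
P(\tilde y) - P'(\tilde y) = \bigl(u'(\tilde x') - u(\tilde x)\bigr)\tilde y^{k+1}.
\]
The left side is a polynomial of degree $\leq k$ and the right side is divisible by $\tilde y^{k+1}$, so both vanish. Hence $P = P'$ and $u(\tilde x) = u'(\tilde x')$; restricting to $C$ (where $\tilde x = \tilde x' = x$) yields $u = u'$, after which the local injectivity of $u$ forces $\tilde x = \tilde x'$. The only genuinely new ingredient compared with the $k=0$ case is the preliminary truncation by $P(\tilde y)$: without stripping off the part of $g$ of degree $\leq k$ in $\tilde y$, the remainder cannot be written as $u(\tilde x)\tilde y^{k+1}$ with $u$ independent of $\tilde y$, so identifying $P$ as the unique such truncation with constant coefficients along $C$ is the main conceptual step.
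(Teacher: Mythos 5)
Your proof is correct and follows essentially the same route as the paper: set $\tilde y=f$, use the hypothesis on $df\wedge dg$ to see that the coefficients of $\tilde y^j$, $j\le k$, in the expansion of $g$ are constants (giving $P$), and then absorb the remaining factor $h(x,\tilde y)$ into a new coordinate $\tilde x$ restricting to $x$ on $C$ — your single substitution $\tilde x=u^{-1}(h(x,\tilde y))$ just combines the paper's two successive changes of the $x$-variable. The uniqueness argument via divisibility by $\tilde y^{k+1}$ and local injectivity of $u$ is also the intended one; note only that the relevant nondegeneracy of $u$ you actually establish (and need) is $u'(0)\neq 0$, i.e.\ $du$ nowhere zero, which is what the statement's ``non vanishing'' refers to.
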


We note that $P$ is characterized by $g=P(f)+O(f^{k+1})$ and the function $u(x)$ on $C$
is given by the restriction of $\frac{g-P(f)}{f^{k+1}}$. The invariant for the pair of corresponding foliations 
is given by the Poincar\'e residue $\omega:=du$ of the $2$-form 
$\frac{dg\wedge df}{f^{k+2}}$ along $C$. In other words, local invariants for the pair $(f,g)$ are given by 
the polynomial $P$ and the function $u$ on $C$ while local invariants for the pair of foliations $(\F,\G)$ is given by $\C\cdot du$ (i.e. $du$ up to a constant).

\begin{proof} By local change of coordinate, we can first set $f=\tilde y$, and we get
$$g=P(\tilde y)+\tilde y^{k+1}(u(x)+\tilde yh(x,\tilde y))$$
where $P=\sum\limits_{n=1}^ka_n\tilde y^n$ and $a_1\not=0$.
Setting $\tilde x=u(x)+\tilde y h(x,\tilde y)-u(0)$
we get the sharper normal form
$$f=\tilde y\ \ \ \text{and}\ \ \ g=P(\tilde y)+\tilde y^{k+1}(u(0)+\tilde x).$$
If we restrict to $x$-change of coordinates inducing the identity along $y=0$
we get the normal form of the statement by setting $u(\tilde x)=u(x)+\tilde y h(x,\tilde y)$.
\end{proof}

As can be checked from the normal form, are equivalent:
\begin{itemize}
\item on any curve $\Gamma$ transversal to $C$ at $p=(0,0)$, the restrictions of $f$ and $g$ coincide 
up to the order $k+1$
(i.e. $f-g_{|C}=o(f^{k+1})$);
\item there exists a curve $\Gamma$ transversal to $C$ at $p$ in restriction to which $f\equiv g$
(and this curve is unique);
\item $P(y)=y$ and $u(0)=0$.
\end{itemize}
In this case, we say that $f$ and $g$ coincide up to the order $k+1$ at the point $p$. 
Given a pair of foliations $(\F,\G)$ satisfying $\Tang(\F,\G)=k+1$,
we can always find a pair $(f,g)$ of first integrals that coincide up to the order $k+1$. Note also that if $f,g$ coincide at order $k+1$ at $p$, then for $\phi,\psi\in\Diff$, the new pair $\phi\circ f$,$\psi\circ g$ coincide at order $k+1$ at $p$ if and only if $\phi (y)=\psi (y)\ \mbox{mod}\ y^{k+2}$. 

At another point of $C$, the function $u$ will be non zero in general, and we cannot find 
a transversal curve in restriction to which $f\equiv g$, but we still have $P(y)=y$ 
by analytic continuation and $f$ still coincides with $g$ up to the order $k$ on any transversal curve.

In order to settle the relationship between the affine structure previously defined
and the holonomy representations of the two foliations $\F$ and $\G$,
let us fix a point $p\in C$ and two local respective first integrals $f$ and $g$ 
that coincide on a transversal curve $\Gamma$. Let us compute the holonomy
representations of the two foliations with respect to these first integrals. 
After analytic continuation along a loop $\gamma$ based at $p$,  we will have on one hand 
$$f^\gamma=\varphi_{\F}^\gamma\circ f\ \ \ \text{and}\ \ \ 
g^\gamma=\varphi_{\G}^\gamma\circ g.$$
On the other hand, we have 
\begin{equation}\label{eq:CompatibilityOrderkFirstInt}
g=f+u(x)f^{k+1}\ \text{mod}\ f^{k+2}\ \ \ \text{with}\ \ \ u(x)=
\left\{\begin{matrix}
c\left(\exp(\theta x)-1\right)\\
cx \hfill\text{(if }\theta=0\text{)}
\end{matrix}\right.
\end{equation}
and, after analytic continuation along $\gamma$, we get (in restriction to $\Gamma$)
$$g^\gamma=\phi_\gamma\circ f^\gamma\ \text{mod}\ f^{k+2}\ \ \ \text{with}\ \ \ \phi_\gamma(y)=y+u(\gamma) y^{k+1}.$$
To summarize, we get
\begin{equation}\label{eq:CompatibilityOrderk}
\varphi_{\G}^\gamma=\phi_\gamma\circ\varphi_{\F}^\gamma\ \text{mod}\ y^{k+2}\ \ \ \text{for all}\ \gamma\in\pi_1(C,p)
\end{equation}
$$\text{with}\ \ \ 
\phi_\gamma(y)=\left\{\begin{matrix}
y+c(e^{\theta\gamma}-1) y^{k+1}\ \ \  \text{if}\ \theta\not=0\\
y+c\gamma y^{k+1}\hfill \text{if}\ \theta=0
\end{matrix}\right.$$

In particular this illustrates the general principe given in \ref{P:Tanghol} according to which the holonomies of both foliations coincide at order $k$ but differ at order $k+1$. 
The above equalities give restrictions on the way they can differ at the order $k+1$.  
Mind that $\gamma\mapsto\phi_\gamma$ is not a group morphism
(even after truncating coefficients of order $\ge k+2$).

The linear part $\mathrm{lin}(\rho_{\F})=\mathrm{lin}(\rho_{\G})$
of the holonomy of the two foliations
is related to the linear part $\gamma\mapsto e^{\theta\gamma}$ of the monodromy 
of the affine structure as follows. 

\begin{prop}\label{P:holaffstruct}If we set $\rho_{\F}(\gamma)=a_\gamma y+\cdots$,
then we have
\begin{equation}
a_\gamma^{-k}=e^{\theta\gamma}.
\end{equation}
In particular,  $\theta$ is completely determined by the linear part of the holonomy and the normal bundle $N_C$ is necessarily torsion of order dividing $k$.
Moreover, if $\mathrm{lin} (\rho_{\F})$ is unitary, then it is torsion
and the affine structure is actually a translation structure, i.e $\theta=0$.
\end{prop}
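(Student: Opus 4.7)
My plan is to extract the relation between $a_\gamma$ and $\theta$ directly from the already-established compatibility equation (\ref{eq:CompatibilityOrderk}) together with the group property of the holonomy representations.  Working on the transversal $(\Gamma,y)$ at the base point $p$ where $f=g$ to order $k+1$, I would write
\[\varphi_\F^\gamma(y) = a_\gamma y + c_2^{(\gamma)}y^2+\cdots+ c_k^{(\gamma)}y^k + \alpha_\gamma y^{k+1}+O(y^{k+2}),\]
and similarly $\varphi_\G^\gamma(y)$ with the same intermediate coefficients $c_j^{(\gamma)}$ (by Proposition \ref{P:Tanghol}) but a different $(k+1)$-coefficient $\beta_\gamma$.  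Extracting the $y^{k+1}$-coefficient of (\ref{eq:CompatibilityOrderk}), and using $\varphi_\F^\gamma(y)^{k+1} = a_\gamma^{k+1}y^{k+1}+O(y^{k+2})$, I read off the basic identity
\[\beta_\gamma - \alpha_\gamma \;=\; u(\gamma)\, a_\gamma^{k+1},\]
where $u$ is the affine primitive of (\ref{Eq:Formuledux}).

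Next, I would exploit the homomorphism property of $\varphi_\F$ and $\varphi_\G$.  Composing the truncated expansions and extracting the $y^{k+1}$-coefficient of $\varphi_\F^{\gamma_1}\circ\varphi_\F^{\gamma_2}$ yields a formula for $\alpha_{\gamma_1+\gamma_2}$ that contains polluting cross-terms built from the intermediate $c_j^{(\gamma)}$'s; crucially those terms are the same for $\F$ and $\G$ and therefore cancel when I subtract.  What remains is the clean cocycle identity
\[(\beta-\alpha)_{\gamma_1+\gamma_2} \;=\; a_{\gamma_1}(\beta-\alpha)_{\gamma_2} + a_{\gamma_2}^{k+1}(\beta-\alpha)_{\gamma_1}.\]
Substituting $(\beta-\alpha)_\gamma = u(\gamma)a_\gamma^{k+1}$ and the affine cocycle $u(\gamma_1+\gamma_2) = e^{\theta\gamma_1}u(\gamma_2)+u(\gamma_1)$ (immediate from (\ref{Eq:Formuledux}) in both cases $\theta=0$ and $\theta\neq0$), then dividing out by $a_{\gamma_2}^{k+1}$, one is left with
\[a_{\gamma_1}\,u(\gamma_2) \;=\; e^{\theta\gamma_1}\,a_{\gamma_1}^{k+1}\,u(\gamma_2).\]
Since $\Tang(\F,\G)=k+1$ exactly, Proposition \ref{P:Tanghol} guarantees $\beta_{\gamma_2}\neq\alpha_{\gamma_2}$ for some $\gamma_2$, hence $u(\gamma_2)\neq 0$; cancelling yields the sought relation $a_\gamma^{-k}=e^{\theta\gamma}$.

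The remaining claims then follow formally.  The character $\gamma\mapsto a_\gamma^k=e^{-\theta\gamma}$ determines $\theta$ uniquely, because $\theta\mapsto(\gamma\mapsto e^{\theta\gamma})$ is injective on $\C$ once $\lattice=\Z+\tau\Z$ has $\R$-rank $2$ (with $\tau\notin\R$).  Interpreting $a_\gamma$ as the monodromy of the Bott connection on $N_C$, the flat bundle $N_C^{\otimes k}$ has monodromy $e^{-\theta\gamma}$; via the exact sequence (\ref{eq:exactseqflatlinear}), this is precisely the class of the trivial holomorphic bundle equipped with the flat connection $-\theta\,dx$, so $N_C^{\otimes k}\simeq\OO_C$ and $N_C$ is torsion of order dividing $k$.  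Finally, if $|a_\gamma|=1$ for all $\gamma\in\lattice$, then $\re(\theta\gamma)=0$ on the whole lattice; since $1$ and $\tau$ are $\R$-linearly independent this forces $\theta=0$, whence $a_\gamma^k=1$ and $u(x)=cx$, so the linear holonomy is torsion and the affine structure is a translation structure.

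The only delicate bookkeeping is the cocycle step in paragraph two: one must verify that every intermediate cross-term produced by composition is indeed shared by $\F$ and $\G$ and so disappears in the difference $(\beta-\alpha)$.  Granted that observation, the argument is essentially a formal jet calculation exploiting the commutative Lie-theoretic structure of the holonomy.
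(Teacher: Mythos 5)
Your argument is correct and is essentially the paper's: the paper likewise extracts the relation by confronting the order-$(k+1)$ discrepancy of composed holonomies with the affine cocycle of $u$, only it specializes at once to $\gamma_1=\gamma_2=\gamma$ and computes $u(2\gamma)/u(\gamma)-1$ in two ways (getting $e^{\theta\gamma}$ from (\ref{Eq:Formuledux}) and $a_\gamma^{-k}$ from (\ref{eq:CompatibilityOrderk})), then concludes the torsion and unitary statements exactly as you do via the connection on $N_C^{\otimes k}$. Your decoupled version with independent $\gamma_1,\gamma_2$ is marginally more robust, since it avoids dividing by a possibly vanishing $u(\gamma)$ when $\theta\neq 0$, but the underlying mechanism is identical.
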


\begin{proof}From the formula (\ref{Eq:Formuledux}), we get 
$$\frac{u(2\gamma)}{u(\gamma)}-1=e^{\theta\gamma}$$
for all $\gamma\in\pi_1(C,p)$. On the other hand, comparing (\ref{eq:CompatibilityOrderk}) for 
$\phi_{\gamma}$ and $\phi_{2\gamma}$ with linear part $a_\gamma$ for $\rho_{\F}$ and $\rho_{\G}$ yields
$$\frac{u(2\gamma)}{u(\gamma)}-1=a_\gamma^{-k}$$
whence the relation. In particular, this shows that $N_C^{\otimes{k}}$ comes equipped with a connection, namely ${\nabla_\F}_{|C}^{\otimes {k}}$, whose monodromy is given by the exponential of the periods of the abelian differential $-\theta dx$. This implies that  $N_C^{\otimes{k}}\simeq {\mathcal O}_C$ and that the monodromy (given by  ${\mathrm{lin} (\rho_{\F})}^{\otimes{k}}$) is not unitary, unless $\theta$ vanishes identically. This concludes the proof.
\end{proof}

In order to state our result, we associate to any bifoliated neighborhood 
$(U,C,\F,\G)$ the holonomy representations $(\rho_{\F},\rho_{\G})$ computed from 
a pair $(f,g)$ of first integrals \textbf{that coincide on a given transversal at $p\in C$}.

\begin{thm}\label{ThmBifoliatedNeighborhood_k>0} Case $\Tang(\F,\G) =k+1 ,k>0$.
\begin{itemize}
\item {\bf Classification} Two analytic/formal bifoliated neighborhoods $(U,C,\F,\G)$  and $(\tilde U,C, \tilde{\F},\tilde{\G})$ (with the same $k>0$) are analytically/formally equivalent if, and only if, there 
exists analytic/formal diffeomorphisms 
$\phi,\psi\in\Diff / \Difffor$ \text{such that} for all $\gamma\in\pi_1(C,p)$,
\begin{equation}\label{eq:Bifoliatedk>0}
\left\{\begin{matrix}\phi\circ\rho_{\tilde\F}(\gamma)=\rho_{{\F}}(\gamma)\circ\phi\\ \psi\circ\rho_{\tilde{\G}}(\gamma)=\rho_{{\G}}(\gamma)\circ\psi
\end{matrix}\right.\ \ \ 
\mbox{with}\ \phi=\psi\ \text{mod}\ y^{k+2}.
\end{equation}
\item {\bf Realization} Given two representations $\rho_{\F},\rho_{\G}$ satisfying the compatibility rule (\ref{eq:CompatibilityOrderk}) for an affine structure defined by a chart $u(x)$ on $C$, 
there is a unique (up to isomorphism) bifoliated neighborhood $(U,C,\F,\G)$
realizing these invariants (with tangency divisor $(k+1)[C]$).
\end{itemize}
\end{thm}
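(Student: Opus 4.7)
The plan is to follow closely the strategy used for the case $k=0$ (Theorem \ref{ThmBifoliatedNeighborhood}), replacing the local normal form of Lemma \ref{LemNormFormBifol} by its higher-order analogue Lemma \ref{LemNormFormBifolBis}. The key structural difference is that, by Proposition \ref{P:holaffstruct}, the common linear part of $\rho_{\F}$ and $\rho_{\G}$ is now a torsion character (since $a_\gamma^{-k}=e^{\theta\gamma}$ forces $N_C^{\otimes k}\simeq \mathcal O_C$), so the Koenigs linearization step used in the $k=0$ proof is unavailable, and we have to rely entirely on the rigid local model $(f,g) = (\tilde y,\, P(\tilde y)+u(\tilde x)\tilde y^{k+1})$ together with the intrinsic affine structure attached to $\omega = du$.

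For the classification part, given an isomorphism data $(\phi,\psi)$ as in (\ref{eq:Bifoliatedk>0}), I first absorb $\phi,\psi$ by left-composition into the first integrals $\tilde f,\tilde g$ of the second neighborhood, so that after this normalization the two holonomy representations coincide on the chosen transversal. The assumption $\phi\equiv\psi\bmod y^{k+2}$ is precisely what guarantees that the modified $(\tilde f,\tilde g)$ still coincide up to order $k+1$ on the transversal at $p$, so that Lemma \ref{LemNormFormBifolBis} applies to both pairs with the same polynomial $P$ and the same value of the affine chart $u$ at $p$. Analytic continuation then propagates the equality of local normal invariants along $C$ (here one uses that the holonomies agree and that the global affine structure on $C$ is intrinsic). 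Applying Lemma \ref{LemNormFormBifolBis} at each point produces a unique germ of diffeomorphism conjugating the two pairs of first integrals; by uniqueness these germs patch into a globally defined equivalence $(U,C,\F,\G)\to(\tilde U,C,\tilde\F,\tilde\G)$.

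For the realization part, I carry out a suspension construction on the universal cover $\C_x\times\C_y$ of $C$, starting from the explicit model
\begin{equation*}
f = y, \qquad g = P(y) + u(x)\, y^{k+1},
\end{equation*}
where $P$ and $u$ are read off from the compatibility data in (\ref{eq:CompatibilityOrderk}) and from the global affine chart determined by $\omega$. For each $\gamma\in\lattice$ I seek a germ $\phi_\gamma$ near $\tilde C=\{y=0\}$ which (i) covers the translation $x\mapsto x+\gamma$ on $\tilde C$ and (ii) conjugates $(\rho_{\F}(\gamma)\circ f,\rho_{\G}(\gamma)\circ g)$ with $(f,g)$. The pair $(\rho_{\F}(\gamma)\circ f,\rho_{\G}(\gamma)\circ g)$ has the same local invariants (polynomial $P$ and affine value $u(x+\gamma)$) as $(f,g)$ at the translated point precisely because of (\ref{eq:CompatibilityOrderk}) together with the transformation rule for $u$ under translations on $C$; this is where Proposition \ref{P:holaffstruct} is essential, as the relation $a_\gamma^{-k}=e^{\theta\gamma}$ is exactly the consistency of linear parts needed to apply Lemma \ref{LemNormFormBifolBis}. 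Uniqueness in that lemma then gives existence of $\phi_\gamma$ and, by the same uniqueness principle, the cocycle relation $\phi_{\gamma_1+\gamma_2}=\phi_{\gamma_1}\circ\phi_{\gamma_2}$. Passing to the quotient yields the desired bifoliated germ $(U,C,\F,\G)$, and uniqueness up to isomorphism follows from the classification part already proved.

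The main obstacle is the bookkeeping around the condition $\phi\equiv\psi\bmod y^{k+2}$ and the compatibility rule (\ref{eq:CompatibilityOrderk}): one has to check that after the initial renormalization of the first integrals, the pairs $(f,g)$ and $(\tilde f,\tilde g)$ really do agree up to order $k+1$ along the chosen transversal (not just at one point), and that the affine monodromy $\gamma\mapsto\phi_\gamma$ extracted from (\ref{eq:CompatibilityOrderk}) matches the intrinsic global affine structure determined by $u$ on $C$. Once this matching is secured, everything else is a mechanical application of the local normal form and its uniqueness.
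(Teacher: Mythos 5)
Your proposal is correct and follows essentially the same route as the paper: classification by absorbing $\phi,\psi$ into the first integrals (using $\phi\equiv\psi\bmod y^{k+2}$ to preserve order-$(k+1)$ coincidence on the transversal) and then invoking the uniqueness in Lemma \ref{LemNormFormBifolBis} plus analytic continuation, and realization by a suspension-type quotient where the deck maps $\phi_\gamma$ are produced pointwise from the same lemma. The only (inessential) difference is your choice of explicit model $g=P(y)+u(x)y^{k+1}$ where the paper takes the closed-form first integral $g=y\left(1-ku(x)y^k\right)^{-1/k}$ of the foliation $dy+y^{k+1}du=0$.
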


\begin{proof}
{\bf Classification part.} Let $(\F,\G)$,$(\tilde{\F},\tilde{\G})$ two pairs of foliations satifying the property \ref{eq:Bifoliatedk>0}. After changing first integrals 
$(\tilde f,\tilde g)$ by $(\phi\circ\tilde f,\psi\circ\tilde g)$, we may assume that 
$\rho_{\F}= \rho_{\tilde{\F}}$ and $\rho_{\G}= \rho_{\tilde{\G}}$.
One easily checks that the affine coordinate $u(x)$ is determined by its values $u(\gamma)$ when 
$\gamma$ runs over $\pi_1(C,p)$, and therefore by  the monodromy of $f$ and $g$  up to order $k+1$,
due to (\ref{eq:CompatibilityOrderk}). We deduce that 
invariants $u(x)$ and $\tilde u(x)$ of the two pairs coincide at $p$. On the other hand, 
$f$ and $g$ (resp. $\tilde f$ and $\tilde g$) coincide on a transversal.
Therefore, Lemma \ref{LemNormFormBifolBis} provides the existence and uniqueness of a diffeomorphism
$\Phi:(U,p)\to(\tilde U,p)$  conjugating $(f,g)$ to $(\tilde f,\tilde g)$ and inducing the identity on $C$. 
After analytic continuation, we can check that $\Phi$ is uniform, due to the fact that $(f,g)$ 
and $(\tilde f,\tilde g)$ have the same monodromy, and provides a global diffeomorphism $\Phi:U\to\tilde U$.
\end{proof}

\begin{proof}{\bf Realization part.}We proceed anagolously to the case $k=0$, taking into account the new invariants. Let us start from the model defined on $\C_x\times\C_y$ by
$$f=y\ \ \ \text{and}\ \ \ g= \frac{y}{\left(1-ku(x) y^k\right)^{1/k}}=y+u(x) y^{k+1}+\cdots$$
which are those first integrals satisfying $f(0,y),g(0,y)=y$ for the foliations
$$\F:\ dy=0\ \ \ \text{and}\ \ \ \G: dy+y^{k+1}\omega=0.$$
where $\omega=du$.
For all $\gamma$, we construct a diffeomorphism $\Phi_\gamma$ conjugating
$$(f^\gamma(x),g^\gamma(x))=(f(x+\gamma,0),g(x+\gamma,0))\ \ \ \text{with}\ \ \ 
(\rho_{\F}(\gamma)\circ f,\rho_{\G}(\gamma)\circ g).$$
By assumption, the two pairs have exactly the same invariants at any point $p$. 
Lemma \ref{LemNormFormBifolBis} provides the existence and unicity of $\Phi_\gamma$
at the neighborhood of any point $p\in\tilde C$, and therefore on a neighborhood of $\tilde C$.
\end{proof}

\begin{lemma} \label{L:conjugacy} When $\theta\not=0$, condition (\ref{eq:CompatibilityOrderk}) exactly means that 
$\rho_{\G}(\gamma)$ coincides with $\psi^{-1}\circ\rho_{\F}(\gamma)\circ\psi$
up to order $k+1$ where $\psi=y+cy^{k+1}+\cdots$.
\end{lemma}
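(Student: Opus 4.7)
The plan is to reduce everything to a single computation of coefficients modulo $y^{k+2}$. Set $\psi(y)=y+cy^{k+1}$; the higher-order terms indicated by $\cdots$ in the statement contribute only at orders $\geq k+2$ to any conjugation $\psi^{-1}\circ\rho_{\F}(\gamma)\circ\psi$, so they may be ignored. Writing $\rho_{\F}(\gamma)(y)=a_\gamma y+\sum_{j\geq 2}b_j y^j$, I would first expand
\[
\rho_{\F}(\gamma)\bigl(\psi(y)\bigr)\equiv \rho_{\F}(\gamma)(y)+a_\gamma c\, y^{k+1}\pmod{y^{k+2}},
\]
since the term $cy^{k+1}$ of $\psi$ multiplied by any $b_j y^{j-1}$ with $j\geq 2$ lands in degree $\geq k+2$. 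Then, using $\psi^{-1}(w)=w-cw^{k+1}\pmod{w^{k+2}}$ and the fact that $w^{k+1}=a_\gamma^{k+1} y^{k+1}+O(y^{k+2})$ when $w=\rho_{\F}(\gamma)(y)+O(y^{k+1})$, I obtain
\[
\psi^{-1}\!\circ\rho_{\F}(\gamma)\circ\psi(y)\equiv \rho_{\F}(\gamma)(y)+c\,a_\gamma\bigl(1-a_\gamma^{k}\bigr)\,y^{k+1}\pmod{y^{k+2}}.
\]

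In parallel, I would expand the right-hand side of (\ref{eq:CompatibilityOrderk}). Using $\phi_\gamma(z)=z+c(e^{\theta\gamma}-1)z^{k+1}$ and $(\rho_{\F}(\gamma)(y))^{k+1}=a_\gamma^{k+1}y^{k+1}+O(y^{k+2})$, one gets
\[
\phi_\gamma\!\circ\rho_{\F}(\gamma)(y)\equiv \rho_{\F}(\gamma)(y)+c\bigl(e^{\theta\gamma}-1\bigr)a_\gamma^{k+1}\,y^{k+1}\pmod{y^{k+2}}.
\]

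The bridge between the two expressions is the identity $a_\gamma^{-k}=e^{\theta\gamma}$ provided by Proposition~\ref{P:holaffstruct}, which gives
\[
\bigl(e^{\theta\gamma}-1\bigr)a_\gamma^{k+1}=\bigl(a_\gamma^{-k}-1\bigr)a_\gamma^{k+1}=a_\gamma-a_\gamma^{k+1}=a_\gamma\bigl(1-a_\gamma^{k}\bigr).
\]
Hence the two degree-$(k+1)$ coefficients agree, proving
\[
\phi_\gamma\!\circ\rho_{\F}(\gamma)\equiv \psi^{-1}\!\circ\rho_{\F}(\gamma)\circ\psi \pmod{y^{k+2}}.
\]
Since (\ref{eq:CompatibilityOrderk}) determines $\rho_{\G}(\gamma)$ modulo $y^{k+2}$ from $\rho_{\F}(\gamma)$, this equality is exactly the claimed equivalence, valid in both directions.

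There is no real obstacle: once the formula $a_\gamma^{-k}=e^{\theta\gamma}$ from Proposition~\ref{P:holaffstruct} is invoked, the lemma follows from the two coefficient computations above. The only small subtlety to record in the writeup is that the ambiguity in the higher-order tail of $\psi$ is harmless because it cannot alter the conjugate modulo~$y^{k+2}$.
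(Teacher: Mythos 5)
Your proof is correct and follows essentially the same route as the paper: a direct coefficient computation modulo $y^{k+2}$ combined with the relation $a_\gamma^{-k}=e^{\theta\gamma}$ from Proposition~\ref{P:holaffstruct}; the paper merely right-composes everything with $(\rho_{\F}(\gamma))^{-1}$ before comparing, which is the same calculation. Your signs are the correct ones (the paper's displayed formulas both carry $c(1-a_\gamma^{-k})$ where $c(a_\gamma^{-k}-1)$ is meant, a harmless consistent typo), and your remark that the tail of $\psi$ beyond order $k+1$ cannot affect the conjugate modulo $y^{k+2}$ is the right point to record.
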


\begin{proof}
Denoting $\varphi^\gamma_{\F}(y):=\rho_{\F}(\gamma)=a_\gamma y+\cdots$, a straightforward computation shows that
$$\psi^{-1}\circ\varphi^\gamma_{\F}\circ\psi\circ(\varphi^\gamma_{\F})^{-1}(y)=y+c(1-a_\gamma^{-k})y^{k+1}+\cdots\ \ \ 
\text{modulo}\ y^{k+2}$$
and condition (\ref{eq:CompatibilityOrderk}) reads
$$\varphi^\gamma_{\G}\circ(\varphi^\gamma_{\F})^{-1}(y)=y+c(1-a_\gamma^{-k})y^{k+1}+\cdots\ \ \ 
\text{modulo}\ y^{k+2}$$
for all $\gamma\in\pi_1(C,p)$, whence the conclusion.
\end{proof}

\begin{cor}
When $\theta\not=0$, then $(U,C)$ is analytically/formally linearizable.
\end{cor}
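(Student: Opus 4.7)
The plan is to use the hypothesis $\theta\not=0$ to locate a hyperbolic element in the linear part of $\rho_{\F}$, apply Koenigs' theorem to linearize it analytically (with the formal analogue handled trivially), propagate the linearization across the abelian image of $\rho_{\F}$, and then glue the resulting linear local first integrals into a global isomorphism with $(N_C,0)$.

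The first step extracts a hyperbolic holonomy element. Proposition \ref{P:holaffstruct} gives the identity $a_\gamma^{-k}=e^{\theta\gamma}$, where $a_\gamma$ denotes the linear part of $\rho_{\F}(\gamma)$. Since $\theta\in\C^*$ and $\lattice\subset\C$ is a real lattice of rank two, the $\R$-linear functional $\gamma\mapsto\re(\theta\gamma)$ is not identically zero on $\lattice$, so there exists $\gamma_0\in\lattice$ with $|a_{\gamma_0}|\neq 1$. In the analytic setting, Koenigs' theorem (Theorem \ref{thm:Koenigs}) then provides an analytic linearization of $\rho_{\F}(\gamma_0)$; in the formal setting, $a_{\gamma_0}$ is not a root of unity and the usual term by term computation yields a formal linearization. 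In either case, by the analytic/formal version of Theorem \ref{TH:formalG}, every germ commuting with a non-torsion element of $\mb L$ must itself lie in $\mb L$, so the entire abelian image $\rho_{\F}(\pi_1(C,p))$ becomes linear in the new coordinate.

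The second step is the globalization. Substituting the linearizer $\phi$ into the transverse coordinate, we may assume $\rho_{\F}(\gamma)(y)=a_\gamma y$ for every $\gamma\in\pi_1(C,p)$. Then the local first integrals $f_i\colon U_i\to\C$ of $\F$ transform as $f_i=a_{ij}f_j$ on overlaps, with constants $a_{ij}\in\C^*$. Combined with holomorphic coordinates along $C$, these first integrals glue into a biholomorphism between $(U,C)$ and a neighborhood of the zero section of the flat line bundle defined by the constant cocycle $(a_{ij})$. Because the multipliers $(a_\gamma)$ are exactly the monodromy of the Bott connection $\nabla_{\F}\vert_C$ on $N_C$, this flat bundle is $N_C$ itself, so $(U,C)\simeq(N_C,0)$.

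The main obstacle is the simultaneous linearization step: one needs the fact that the convergent centralizer of a hyperbolic linear germ in $\Diff$ is just $\mb L$. This is standard, following from the uniqueness of the Koenigs coordinate attached to any given hyperbolic germ, but it warrants an explicit reference. Once this is in hand, the gluing argument is essentially tautological, so the bulk of the work is in the linearization of the abelian group. The foliation $\G$ plays no role beyond supplying, through the affine structure, the hypothesis $\theta\neq 0$ that feeds Koenigs via Proposition \ref{P:holaffstruct}.
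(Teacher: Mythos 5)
Your first step is fine and agrees with the paper: $\theta\neq 0$ forces $\vert a_{\gamma_0}\vert\neq 1$ for some $\gamma_0\in\lattice$ via Proposition \ref{P:holaffstruct}, Koenigs linearizes $\rho_{\F}(\gamma_0)$, and the centralizer statement of Theorem \ref{TH:formalG} linearizes the whole abelian image. The gap is in the globalization. Linearizing the holonomy \emph{representation} $\rho_{\F}:\pi_1(C)\to\Diff$ does not let you choose local first integrals with $f_i=a_{ij}f_j$ for constants $a_{ij}$: the $\varphi_{ij}$ of (\ref{eq:submdefolf}) form a cocycle over a covering of $C$ by disks, the holonomy only records its products around loops, and reducing the cocycle itself to $\mb L$ meets the Ueda-type obstructions of Lemma \ref{admissible} lying in $H^1(C,N_C^{-n})$. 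By Proposition \ref{P:holaffstruct} the bundle $N_C$ is torsion of order $m$ dividing $k$, so these groups equal $\C$ for every $n\in m\N$ and the obstructions have no reason to vanish. Concretely, your implication ``non-unitary linearizable holonomy of one foliation $\Rightarrow$ $(U,C)$ linearizable'' is false: in the normal forms of Theorem \ref{thm:NormalFormNeigh0=p<k} the foliation $\F_\infty$ has exactly linear, non-unitary holonomy (\ref{eq:NormalFormNeighHol0=p<k}), yet the ambient neighborhood has finite Ueda type $k$ and is not linearizable. A germ of foliated neighborhood is simply not determined by the holonomy of a single foliation (compare $U_{\lambda,\Lambda}$ for varying $\Lambda$, which all share the same $\rho_{\F_0}$).

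This is why your closing remark that ``$\G$ plays no role beyond supplying $\theta\neq 0$'' is exactly where the argument breaks: the second foliation is what provides the rigidity. The paper's proof linearizes $\rho_{\F}$, then uses Lemma \ref{L:conjugacy} to conjugate $\rho_{\G}$ to the \emph{same} linear representation by a map tangent to the identity to order $k+1$ (so that the compatibility $\phi=\psi\ \mathrm{mod}\ y^{k+2}$ of (\ref{eq:Bifoliatedk>0}) holds), concludes from the classification part of Theorem \ref{ThmBifoliatedNeighborhood_k>0} that $\mathrm{lin}(\rho_{\F})$ and $c$ form a complete system of invariants of the \emph{bifoliated} neighborhood, and finally exhibits an explicit pair $(\F_0,\G_0)$ on the linear model $(N_C,0)$ realizing the same invariants. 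To repair your proof you would have to replace the gluing paragraph by this bifoliated rigidity argument (or by an independent proof that all Ueda obstructions vanish when $\theta\neq0$, which is not supplied).
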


\begin{proof}
We deal with a pair $(\F,\G)$ of convergent foliation tangent at order $k+1$ along $C$, the proof of the formal analogue being the same. 

When $\theta\not=0$, the linear part $\mathrm{lin}\left(\rho_{\F}\right):\pi_1(C,p)\to\C^*$ 
is not unitary and has infinite image;  then both $\rho_{\F}$ and $\rho_{\G}$ are analytically
linearizable with the same linear part. One can then assume that $\rho_ {\F}$ coincide with its linear part in the pair $(\rho_ {\F},\rho_{G})$. Then, with the notation of lemma \ref{L:conjugacy},   
$H(\gamma):=\psi \circ\varphi_{\G}^\gamma\circ\psi^{-1}(y)=a_\gamma y\ \mod\ y^{k+2}$. One can then easily verify that there exists $\phi\in\Diff$ tangent to identity up to order $k+1$ such that for every $\gamma$,  $\phi\circ H(\gamma)\circ \phi^{-1}(y)=a_\gamma y$.
In particular, and thanks to Theorem \ref{ThmBifoliatedNeighborhood_k>0}, $\mathrm{lin}\left(\rho_{\F}\right)$ and $c$ provide a complete system of bifoliated analytical invariants. Let us show that we can realize the same invariants
for a pair of foliations $(\F_0,\G_0)$ on the linear neighborhood $(N_C,0)={\C}_x\times (\C_y ,0)/G$ where $G=\{g_\gamma, \gamma\in\Gamma_\tau\}$ is an abelian group of transformations of the form $g_\gamma (x,y)=(x+\gamma, \lambda_\gamma y)$ with ${\lambda_\gamma}^k=1$.  The coordinates $x$ and $y$ give rise on  $N_C$ to the transversal fibration $dx$ and the unitary foliation $dy$ (actually a fibration with $C$ as a  multiple fiber of order dividing $k$). 
Let $\lambda$ be a non zero complex number such that for every $\gamma\in\pi_1(C)$, $a_\gamma=\lambda_\gamma e^{\lambda \gamma}$.  Now,  consider on $N_C$  the foliations $\F_0$ and $\G_0$ respectively defined by 
$$\alpha=dy+{\lambda}y dx\ \text{and}\ \beta=\alpha-ck^2 y^{k+1}dx$$
whose tangency order along $C$ agrees with that of $\F$ and $\G$.
 
 Considering the new variable $z=ye^{{\lambda}{x}}$ ( a local first intergral for $\F_0$ which gives local model at $p=0$), 
one easily checks that $(U,C,\F,\G)$ and $\big((N_C,0), C, \F_0,\G_0\big)$ have the same invariants
and are thus equivalent. This gives the sought conjugation between $U$ and  $(N_C,0)$.
\end{proof}
\subsection{Proof of Theorem \ref{THM:PencilFol}}
In view of the results proved in section \ref{S:Abeliansubgroups} and \ref{S:Existence}, it remains to check that one obtains the right expression for the holonomies. Let us detail the proof for $p>1$ (the remaining cases being settled similarly). With the notations and results of Proposition \ref{prop:holonomyperiods}, one can choose a transversal $(T,z)$ in restriction to which $\omega_0=\frac{dz}{z^{k+1}}+\lambda\frac{dz}{z}$, so that $\varphi_\gamma(z)= \varphi_\gamma(z)=a_\gamma\cdot\exp\left(\sigma_{\omega_0}(\gamma) v_{k,\lambda} \right)$, 
with $\sigma_{\omega_0}(1)=0$ and $\sigma_{\omega_0}(\tau)=1$. 
Up to multiply $\omega_\infty$ by a constant, one can suppose that $\sigma_{\omega_\infty}(1)=1$ and  $\sigma_{\omega_\infty}(\tau)=\tau$. Then $\omega_t$ ($t\in\C$) in restriction to $(T,z)$ has the form $\omega_t =\frac{dz}{z^{k+1}}+\lambda\frac{dz}{z} +t\alpha \frac{dz}{z^{p+1}}dz +o(1)$, $\alpha\not= 0$.  Therefore, the $k+1$ jet of the  formal vector field $v_t$  dual to $\omega_t $ coincide with that of  $v_{k,\lambda}$. Using again item (2) of Proposition  \ref{prop:holonomyperiods}, one obtains that the $k+1$ jet of the holonomy of $\F_t$ along $\gamma\in K$ is given by the truncation at order $k+1$ of  $\exp{[(1+t)\left(
\sigma_{\omega_0(\gamma)} v_{k,\lambda} \right)]}$.   By the  comparison relation (\ref{eq:CompatibilityOrderk}) ($\theta=0$),  we indeed obtain  for every $\gamma\in\Gamma$,  that the $k+1$ jet of the holonomy of $\F_t$ along $\gamma$ coincide with the $k+1$ jet of $a_\gamma \exp{[(1+t)\left(
\sigma_{\omega_0 }(\gamma) v_{k,\lambda} \right)]}$. We thus have the sought relation for the expression of the holonomy of $\F_t  ,t\in\C$. We conclude along the same line in order to justify the computation of the holonomy of $\F_\infty$.\qed

\subsection{Proof of Theorem \ref{ThmBifoliated}} 

 Theorem \ref{ThmBifoliated} of the intoduction and more particularly its realization part is just a particular case of 
 Theorems \ref{ThmBifoliatedNeighborhood} and \ref{ThmBifoliatedNeighborhood_k>0}.\qed

\section{Formal classification of neighborhoods}
As recalled in the introduction, $(U,C)$ is formally linearizable as soon as $N_C$ has infinite order. We detail below the torsion cases.
\subsection{The case of fibrations}

Let $(U,C)$ be a neighborhood.
Assume that $C$ is a smooth (but possibly non reduced) fiber of an analytic fibration 
$f:U\to\Delta$ (with $\Delta\subset\C$ the unit disc): $f^{-1}(0)=mC$ and 
$N_C$ is torsion of order $m$, for some $m\in\Z_{>0}$.

\begin{prop}Let $(U,C)$ an analytic neighborhood with a fibration.
Are equivalent:
\begin{itemize}
\item the fibration $f:U\to\Delta$ is isotrivial,
\item there is a formal fibration $x:U\to C$ transversal to $C$,
\item $\Folf$ is not reduced to the fibration,
\item $(U,C)\simeq (N_C,0)$ is formally linearizable,
\item $(U,C)\simeq (N_C,0)$ is analytically linearizable.
\end{itemize}
\end{prop}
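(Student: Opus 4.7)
The plan is to prove the cycle $(5) \Rightarrow (4) \Rightarrow (3) \Rightarrow (2) \Rightarrow (1) \Rightarrow (5)$, which closes the equivalence. The step $(5) \Rightarrow (4)$ is tautological. For $(4) \Rightarrow (3)$, observe that in $(N_C, 0)$ the pencil of flat holomorphic connections on the torsion line bundle $N_C$ produces a $\PP^1$-family of foliations tangent to the zero section, exactly one of which (the unitary one) is the fibration. The implication $(2) \Rightarrow (1)$ is the Kodaira--Spencer statement on isotriviality recalled in the introduction. For $(1) \Rightarrow (5)$, I pass to the cyclic degree-$m$ cover $\pi : \tilde U \to U$ ramified along $C$, so that $f = \tilde f^m$ with $\tilde C := \pi^{-1}(C)$ reduced and $N_{\tilde C}$ trivial; isotriviality descends to $\tilde f$ and, combined with constant $j$-invariant, yields $\tilde U \simeq \Delta \times C$ analytically, so descending by the $\Z/m$-deck action recovers $(U, C) \simeq (N_C, 0)$.

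The core step is $(3) \Rightarrow (2)$. Let $\G \in \Folf$ with $\G \ne \F$. Both share the $m$-torsion linear holonomy coming from the unitary connection on $N_C$; since $\F$ is an analytic fibration, $\rho_\F$ is finite and linear, so by Proposition \ref{P:Tanghol} combined with Theorem \ref{TH:formal}, $\rho_\G$ is necessarily non-linearizable with tangency $\Tang(\F, \G) = k+1 \ge 2$ and $m \mid k$. Proposition \ref{P:holaffstruct}, applied in this torsion-unitary setting, forces the induced affine structure on $C$ to be a translation structure ($\theta = 0$, $u(x) = cx$), and the abelian group generated by $\rho_\F, \rho_\G$ lies in a conjugate of $\mb E_{k,\lambda}$ for some $\lambda \in \C$. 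I then invoke the realization part of Theorem \ref{ThmBifoliated} to identify $(U, C, \F, \G)$ with the quotient of a neighborhood of $\tilde C = \{y = 0\} \subset \C_x \times \C_y$ carrying the model first integrals $f = y$ and $g = \exp(cx\, v_{k,\lambda})(y)$, under the $\lattice$-action $\Phi_\gamma(x, y) = (x + \gamma,\, a_\gamma y)$. The identities $f \circ \Phi_\gamma = \rho_\F(\gamma) \circ f$ and $g \circ \Phi_\gamma = \rho_\G(\gamma) \circ g$ follow from $a_\gamma^k = 1$, which makes multiplication by $a_\gamma$ commute with the flow of $v_{k,\lambda}$; since $\Phi_\gamma$ manifestly preserves $\{dx = 0\}$, the coordinate $x$ descends modulo $\lattice$ to a formal transversal fibration $x : U \to C = \C/\lattice$.

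The main obstacle is the last step above: verifying that in this torsion-translation setting the bifoliated realization truly admits the simple linear action $\Phi_\gamma(x, y) = (x + \gamma, a_\gamma y)$, with no additional $y$-dependence in the $x$-component. This rests on the key compatibility $a_\gamma^k = 1$, which guarantees that multiplication by $a_\gamma$ lies in the centralizer of the flow $\exp(t v_{k,\lambda})$, and on checking that no higher-order correction is forced by the bifoliated normal form of Lemma \ref{LemNormFormBifolBis}. With these verifications in place, the descent of the $x$-fibration to $U$ is immediate, and all five conditions are linked.
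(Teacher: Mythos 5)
Your overall cycle is reasonable, but the core step $(3)\Rightarrow(2)$ has a genuine gap. You assert that the fibration $\F$ and the second foliation $\G$ ``share the $m$-torsion linear holonomy coming from the unitary connection on $N_C$'', and from this you deduce $\Tang(\F,\G)=k+1\ge 2$. This is not justified: the linear part of $\rho_{\G}$ is the monodromy of the Bott connection of $\G$ restricted to $C$, which is a flat connection on $N_C$ but need not be the unitary one. When $\Tang(\F,\G)=1$ the two linear parts necessarily differ by $\exp(\int_\gamma\omega)$ for a nonzero holomorphic $1$-form $\omega$ (see (\ref{CompatibilityDiscrepency})), and this case genuinely occurs: on the linear model the logarithmic foliations $\lambda\frac{df}{f}+dx=0$ listed in the paper's own proof are tangent to the fibration $df=0$ to order exactly one. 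Your argument therefore silently discards the possibility $\Tang(\F,\G)=1$, for which Proposition \ref{P:holaffstruct} does not even apply; that case must be treated separately (it is in fact the easy one: $\rho_{\G}$ is then non-unitary, hence linearizable by Theorem \ref{thm:Koenigs}, and Example \ref{ExampleLinear} shows that the suspension realizing the pair preserves the $x$-fibration). A second, smaller problem is circularity: you prove $(2)\Rightarrow(1)$ by citing the Kodaira--Spencer statement ``recalled in the introduction'', but that sentence of the introduction is precisely part of the proposition being proved here. The paper instead upgrades the formal transversal fibration to an analytic one by Grauert's comparison theorem and then uses the two fibrations as a pair of global coordinates, obtaining $(2)\Rightarrow(5)$ directly.

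For comparison, the paper disposes of $(3)$ with a one-line geometric argument: any element of $\Folf$ distinct from the fibration is transversal to it outside $C$ (the tangency divisor being supported on $C$), hence transports nearby fibers isomorphically onto one another, forcing isotriviality. This bypasses the holonomy machinery entirely. Your route through Theorem \ref{ThmBifoliatedNeighborhood_k>0} can be made to work for $k\ge 1$ --- the model action $\Phi_\gamma(x,y)=(x+\gamma,a_\gamma y)$ is correct there, and the commutation coming from $a_\gamma^k=1$ is indeed the right point --- but it leaves unproved exactly the normalization you flag as ``the main obstacle'', namely that $\rho_{\G}$ can be brought to the form $a_\gamma\exp(c\gamma\, v_{k,\lambda})$ by a $\psi$ agreeing modulo $y^{k+2}$ with the diffeomorphism linearizing $\rho_{\F}$, and it still omits the $k=0$ case. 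As written, the proposal does not yet close the implication $(3)\Rightarrow(2)$.
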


\begin{proof}If $(U,C)$ admits an analytic fibration transversal to $C$, then it is linearizable
(use $x:U\to C$ and $f:U\to\Delta$ as a pair of global coordinates). If $(U,C)$ admits a formal fibration 
transversal to $C$, then there exists one analytic by Grauert Comparison Theorem \cite{GrauertComparison}.
If $\F\in\Folf$ is distinct from the fibration, then it is transversal to it outside of $C$, inducing a fibration transversal 
to other fibers; the fibration is therefore isotrivial and the neighborhood linearizable.
On the linear neighborhood $(N_C,0)$, there are many foliations due to the size of automorphism group 
$$(x,f)\mapsto (x+\varphi(f),\psi(f))$$
but modulo this group, all foliations are diffeomorphic to 
$$df=0,\ \ \ \lambda\frac{df}{f} + dx=0\ \ \ \text{or}\ \ \ \frac{df}{f^{k+1}}+\lambda\frac{df}{f} + dx=0.$$
Indeed, given a foliation $\F\in\Folf$,
it is globally defined by 
$$df=\sum_{n\ge0}\alpha_nf^n$$
where $\alpha_n$ are holomorphic $1$-forms on $C$, therefore of the form $\alpha_n=a_ndx$.
Therefore, we can rewrite 
$$\frac{df}{\sum_n a_nf^n}-dx=0$$
(or $df=0$ if all $a_n=0$) and then normalize the meromorphic $1$-form by a change of $f$-coordinate.
In particular, we deduce that all representations are linearizable (in the logarithmic case) or normalizable (in the irregular case). 
\end{proof}

In the non isotrivial case, we have:

\begin{thm}\label{TH:F_n}
Let $(U,C)$ having a non isotrivial fibration $f:U\to\Delta$. Then $(U,C)$ is conjugated
 to one and only one of the following analytic neighborhood $F_n (a_1,a_\tau),\ n\geq 1$:
\[\left\{\begin{array}{ccc} (x,y)&\sim& (x+1,a_1 y) \\ (x,y)&\sim&(x+\tau+y^n,a_\tau y)\end{array}\right. \]
Moreover, the conjugation can be made analytic.
\end{thm}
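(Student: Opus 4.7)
The plan is to lift $(U, C)$ to the universal cover of a neighborhood of $C$ and normalize the resulting lattice action step by step in the $y$-direction.

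On the universal cover $\widetilde U \simeq \C_x \times (\C_y, 0)$ of a neighborhood of $\widetilde C = \{y = 0\}$, I choose $y$ such that $y^m = f$ lifts the fibration ($m$ being the multiplicity of $C$). The invariance of $y^m$ under the deck action forces the $\lattice$-action to take the form
\[
\phi_1(x, y) = (x + 1 + g_1(x, y),\ a_1 y),\qquad \phi_\tau(x, y) = (x + \tau + g_\tau(x, y),\ a_\tau y),
\]
with $g_i(x, 0) = 0$ and $(a_1, a_\tau)$ the $m$-torsion monodromy of the unitary connection on $N_C$: indeed, any further perturbation of the $y$-component is a function whose $m$-th power equals $1$, and hence equals $1$ by continuity along $\widetilde C$.

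Expanding $g_i(x, y) = \sum_{k \geq 1} g_{i, k}(x) y^k$, the commutation $\phi_1 \phi_\tau = \phi_\tau \phi_1$ at order $y^k$ yields
\[
g_{\tau, k}(x) + a_\tau^k g_{1, k}(x + \tau) = g_{1, k}(x) + a_1^k g_{\tau, k}(x + 1),
\]
which is the group-cohomological cocycle condition computing $H^1(\lattice, \mathrm{Hol}(\C)_\chi) \simeq H^1(C, N_C^{\otimes k})$ where $\chi = (a_1^k, a_\tau^k)$. Admissible changes of coordinates $\Psi(x, y) = (x + A_k(x) y^k + O(y^{k+1}),\ y)$, constrained to preserve $y^m = f$, modify $(g_{1, k}, g_{\tau, k})$ by the coboundary $(a_1^k A_k(x+1) - A_k(x),\ a_\tau^k A_k(x+\tau) - A_k(x))$. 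Hence the residual datum at order $k$ is a class in $H^1(C, N_C^{\otimes k})$, which vanishes unless $m \mid k$; in that latter case the bundle is trivial and $H^1 = \C$, with canonical representative $g_{1, k} = 0,\ g_{\tau, k} = 1$.

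Let $n$ be the smallest index at which this class is non-zero. The non-isotriviality of $f$ guarantees that such $n$ exists and is finite: otherwise all $g_i$ would be eliminable, yielding $(U, C) \simeq (N_C, 0)$, whose canonical fibration is isotrivial. Rescaling $y$ by a constant normalizes the leading obstruction to $1$, placing us in the normal form $F_n(a_1, a_\tau)$ up to order $n$; residual higher-order terms are cleared by further changes of coordinates, now allowed to modify $y$ mildly via $y \mapsto y(1 + O(y^n))$ (which perturbs $y^m$ only at orders absorbed by the already-fixed term $y^n$ in $g_\tau$). The integer $n$ is intrinsic, being the order in $y$ at which the modular parameter of nearby fibers first deviates from $\tau$ (equivalently, the vanishing order of the Kodaira–Spencer class of $f$); hence distinct values of $n$ give inequivalent neighborhoods. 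Analyticity of the conjugation follows from convergence of the above procedure (the cohomology computations are carried out analytically on the acyclic cover $\widetilde U \to U$), or alternatively from Grauert's comparison theorem \cite{GrauertComparison}.

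The main obstacle is the clearing of higher-order residues after the leading $y^n$ term is fixed: a priori each new multiple of $m$ beyond $n$ contributes a fresh $H^1(C, \OO_C) \simeq \C$ obstruction, and one must show that the enlarged symmetry available once the leading term is frozen — including gauge transformations touching the $y$-variable — suffices to kill all of them, thereby reducing the entire formal data of $(U,C)$ to the single invariant $n$.
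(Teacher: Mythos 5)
Your route is genuinely different from the paper's: the paper disposes of this case in two lines by quoting Ueda's theorem that $C$ is a multiple fiber of an \emph{analytic} fibration and then invoking classical Kodaira--Spencer theory of elliptic fibrations, whereas you run an explicit cocycle normalization of the deck action on the universal cover. The scheme is viable, and it has the merit of making visible the implicit constraint $m\mid n$ (needed already for $\phi_1$ and $\phi_\tau$ in $F_n$ to commute, and forced by the vanishing $H^1(C,N_C^{\otimes -k})=0$ for $m\nmid k$). But as you yourself admit in your last paragraph, the central step is not proved: you must kill an \emph{infinite} sequence of obstructions $c_N\in H^1(C,\OO_C)\simeq\C$, one for each multiple $N$ of $m$ with $N>n$, and the theorem is precisely the assertion that all of them die. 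This is not optional bookkeeping; it is the content of the statement. The good news is that it closes with a short computation you should include: the reparametrizations of the transversal that commute with every $y\mapsto a_\gamma y$ (equivalently, that amount to a change of base coordinate $f\mapsto h(f)$) are exactly $y\mapsto y\,u(y^m)$, so they leave the $y$-components of $\phi_1,\phi_\tau$ exactly linear; taking $u=1+by^{N-n}$ (legitimate since $m\mid N-n$), the already-frozen term transforms by
$$\bigl(y(1+by^{N-n})\bigr)^{n}=y^{n}+nb\,y^{N}+O(y^{2N-n}),$$
so the class $c_N$ is shifted by $nb$ and no term of order $<N$ is touched ($g_{\tau,k}=0$ for $n<k<N$ by induction, $g_1\equiv 0$). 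Choosing $b=-c_N/n$ completes the induction.

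The second gap is the analyticity claim. ``Convergence of the above procedure'' is asserted, not proved, and order-by-order normalizations in this paper typically diverge (that is the whole point of Sections 5--6); Grauert's comparison theorem, as used in the Proposition preceding Theorem \ref{TH:F_n}, upgrades a formal \emph{transverse fibration} to an analytic one in the isotrivial case, and does not directly upgrade your formal conjugation. Two honest ways to finish: (i) follow the paper and reduce to the analytic classification of germs of non-isotrivial elliptic fibrations with a smooth multiple fiber, where the period map $t\mapsto\tau(t)$ is holomorphic and its contact order with the constant $\tau$ is the invariant $n/m$; or (ii) push your scheme through with majorant estimates à la Ueda--Arnol'd, which is feasible here because the twisted cohomological equations only ever involve the finitely many flat bundles $N_C^{\otimes j}$, $0\le j<m$, so the relevant small divisors are uniformly bounded below. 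Either way, the convergence needs an argument; as written it is a second genuine hole.
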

\begin{proof}
Under the assumptions of Theorem \ref{TH:F_n}, Ueda has shown that $C$ is a (maybe multiple) fiber of some \textit{analytic} fibration (\cite[Theorem 3, p.596]{Ueda})  and the remainder follows from classical Kodaira-Spencer theory.  Remark that for $n>0$, there exists a transverse  fibration on the $n^{th}$ infinitesimal neighborhood $C_{(n)}$, but not on $C_{(n+1)}$, which is a way to prove that $F_n$ and $F_{n'}$ are formally distincts whenever $n\not=n'$. 
\end{proof}

\subsection{The case $N_C$ torsion and $\utype(C)$ finite}
The combination of Theorem \ref{thm:feuilletagesformels} and Proposition \ref{prop:holonomyperiods}
shows that holonomy representation of $\F_0$ takes the form in a convenient formal transversal $(T,y)$:
\begin{equation}\label{eq:HolonF0}
\rho_{\F_0}\ :\ 
\left\{\begin{matrix}
1\mapsto a_1y\hfill\\ \tau\mapsto a_\tau\exp(v_{k,\lambda})
\end{matrix}\right.\ \ \ \text{with}\ \ \ v_{k,\lambda}=\frac{y^{k+1}}{1+\lambda y^k}\partial_y
\end{equation}
where $(a_1,a_\tau)$ is the $m$-torsion monodromy of the unitary connection on $N_C$, 
$k=mk'$ is the Ueda type of $(U,C)$, $k'\in\Z_{>0}$, and $\lambda\in\C^*$. 

\subsubsection{The case $\F_\infty$ is a fibration transversal to $C$} \label{SS:transverse}
This is the classical suspension case and corresponds to the case $p=-1$ of theorem  \ref{THM:PencilFol}  : there are no other invariants and the neighborhood $(U,C)$
can be defined as the quotient of the germ $(\tilde U,\tilde C):=(\C_x\times \C_y,\{y=0\})$ 
by the group generated by
\[\left\{\begin{array}{ccc} (x,y)&\sim& (x+1,a_1 y) \\ (x,y)&\sim&(x+\tau,a_\tau \exp(v_{k,\lambda}))\end{array}\right. \]
and the pencil of foliations (or closed $1$-forms) is generated by
$$\omega_0=\frac{dy}{y^{k+1}}+\lambda\frac{dy}{y}\ \ \ \text{and}\ \ \ \omega_\infty=dx.$$

\subsubsection{The case $\F_\infty$ is a logarithmic foliation tangent to $C$}
This is the case $p=0$ in Theorem \ref{THM:PencilFol}:
\begin{equation}\label{eq:HolonFinfty_p=0}
\rho_{\F_\infty}\ :\ 
\left\{\begin{matrix}
1\mapsto a_1e^cy+o(y)\hfill\\ \tau\mapsto a_\tau e^{c\tau}y+o(y)
\end{matrix}\right.
\ \ \ \text{with}\ \ \ c\in\C^*.
\end{equation}
There are no other formal invariants in this case. In fact, let 
$$\varphi_{k,\lambda}:=\exp(v_{k,\lambda})\ \ \ \text{and}\ \ \ 
\displaystyle{g_{k,\lambda}(y):=\frac{1}{c}\int_0^y \frac{dt}{t}-a_\tau\varphi_{k,\lambda}^*\frac{dt}{t}}.$$
Then we have

\begin{thm}\label{thm:NormalFormNeigh0=p<k}
If $0=p<k$, then the neighborhood $(U,C)$ is formally equivalent to the quotient 
of $(\tilde U,\tilde C):=(\C_x\times \C_y,\{y=0\})$ by the group generated by
\begin{equation}\label{eq:NormalFormNeighGroup0=p<k}
\left\{\begin{array}{ccc} \phi_1(x,y)&=& (x+1\ ,\ a_1 y) \\ 
\phi_\tau(x,y)&=&(x+\tau+g_{k,\lambda}(y)\ ,\ a_\tau \varphi_{k,\lambda}(y))\end{array}\right. 
\end{equation}
and the pencil $\omega_t=\omega_0+t\omega_\infty$ of closed $1$-forms is generated by
\begin{equation}\label{eq:NormalFormNeigh1Form0=p<k}
\omega_0=\frac{dy}{y^{k+1}}+\lambda\frac{dy}{y}\ \ \ \text{and}\ \ \ \omega_\infty=cdx+\frac{dy}{y}
\end{equation}
with $c\in\C^*$. The holonomy representation of the pencil $\F_t:\{\omega_t=0\}$ is given by
\begin{equation}\label{eq:NormalFormNeighHol0=p<k}
\rho_{\F_t}\ :\ 
\left\{\begin{matrix}
1\mapsto a_1\exp(tv_{k,\lambda+t})\hfill\\ \tau\mapsto a_\tau\exp((t\tau+1) v_{k,\lambda+t})
\end{matrix}\right.\ \ \ \text{and}\ \ \ \rho_{\F_\infty}\ :\ 
\left\{\begin{matrix}
1\mapsto a_1e^c y\hfill\\ \tau\mapsto a_\tau e^{t\tau} y
\end{matrix}\right.
\end{equation}
when computed on the transversal $(T=\{x=0\},y)$.
\end{thm}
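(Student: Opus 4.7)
The strategy is to reduce the formal classification of $(U,C)$ to the bifoliated classification of the canonical pair $(\F_0,\F_\infty)$ guaranteed by Theorem \ref{THM:PencilFol}, then exhibit the quotient (\ref{eq:NormalFormNeighGroup0=p<k}) as an explicit representative. Since here $\Tang(\F_0,\F_\infty)=p+1=1$, the relevant bifoliated classification is Theorem \ref{ThmBifoliatedNeighborhood}: such a pair is determined up to isomorphism by independent formal conjugacies of $\rho_{\F_0}$ and $\rho_{\F_\infty}$, subject only to the linear-part discrepancy rule (\ref{CompatibilityDiscrepency}). I would first put $\rho_{\F_0}$ in normal form: since $\utype(U,C)=k<\infty$, the element $\rho_{\F_0}(\tau)$ is non-linearizable, and Theorem \ref{TH:formalG} conjugates the abelian image of $\rho_{\F_0}$ into the model $\mb E_{k,\lambda}$ for a unique $\lambda\in\C^*$, yielding (\ref{eq:HolonF0}). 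Separately, for $p=0$ the holonomy of $\F_\infty$ has multipliers $(a_1 e^c,a_\tau e^{c\tau})$ with $c\in\C^*$ and, being tangent to its linear part only to order $1$, lies in the linear model $\mb L$ by Theorem \ref{TH:formalG}, so it can be linearized. The discrepancy rule (\ref{CompatibilityDiscrepency}) then amounts to $\omega=c\, dx$.

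Next I would verify that the quotient model (\ref{eq:NormalFormNeighGroup0=p<k}) realizes these data. This requires checking two things: that $\phi_1$ and $\phi_\tau$ commute so that the $\lattice$-action is well defined, and that $\omega_0,\omega_\infty$ are $\lattice$-invariant. For $\omega_0=dy/y^{k+1}+\lambda\, dy/y$, invariance under $\phi_1$ reduces to $a_1^k=1$ (since $m\mid k$), and invariance under $\phi_\tau$ is the identity $\varphi_{k,\lambda}^*\omega_0=\omega_0$, which holds because $v_{k,\lambda}$ is dual to $\omega_0$ and $d\omega_0=0$, hence $L_{v_{k,\lambda}}\omega_0=0$. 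For $\omega_\infty=c\,dx+dy/y$, the function $g_{k,\lambda}$ is defined precisely so that $c\,g'_{k,\lambda}(y)\,dy=dy/y-d\varphi_{k,\lambda}/\varphi_{k,\lambda}$, which forces $\phi_\tau^*\omega_\infty=\omega_\infty$. Commutativity of $\phi_1$ and $\phi_\tau$ then follows since both maps preserve $\omega_0,\omega_\infty$ and their $y$-components $(y\mapsto a_1 y)$ and $(y\mapsto a_\tau\varphi_{k,\lambda}(y))$ commute with one another (the second fixes the first because $a_1$ commutes with $\varphi_{k,\lambda}$ thanks to $a_1^k=1$).

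Finally I would read off the holonomies on the transversal $T=\{x=0\}$. The unique first integral of $\F_0$ with value $y$ on $T$ is $y$ itself, so the holonomy along $1,\tau$ is the $y$-component of $\phi_1,\phi_\tau$, reproducing (\ref{eq:HolonF0}). A first integral of $\F_\infty$ with value $y$ on $T$ is $y\, e^{cx}$, and its monodromy under $\phi_1,\phi_\tau$ yields $y\mapsto a_1 e^c y$ and $y\mapsto a_\tau e^{c\tau}y$. For the pencil, the restriction of $\omega_t=\omega_0+t\omega_\infty$ to $T$ equals $dy/y^{k+1}+(\lambda+t)\,dy/y$, so applying Proposition \ref{prop:holonomyperiods}(2) together with the periods $\sigma_{\omega_t}(1)=t$ and $\sigma_{\omega_t}(\tau)=1+t\tau$ (derived from $\sigma_{\omega_\infty}(\gamma)=c\int_\gamma dx$ and the normalization $\sigma_{\omega_0}(1)=0,\sigma_{\omega_0}(\tau)=1$) gives (\ref{eq:NormalFormNeighHol0=p<k}). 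The main obstacle is the invariance of $\omega_\infty$ under $\phi_\tau$, which is exactly what the integral definition of $g_{k,\lambda}$ encodes; granted this, the classification part of Theorem \ref{ThmBifoliatedNeighborhood} implies that any $(U,C)$ with these invariants is formally equivalent to the constructed model.
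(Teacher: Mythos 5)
Your proposal is correct and follows essentially the same route as the paper: normalize $\rho_{\F_0}$ via Theorem \ref{TH:formalG}, linearize $\rho_{\F_\infty}$, check that the explicit quotient model carries $\omega_0,\omega_\infty$ with the prescribed holonomies, and conclude with the classification part of Theorem \ref{ThmBifoliatedNeighborhood} --- the paper merely \emph{derives} the form of $\phi_1,\phi_\tau$ from invariance of the two $1$-forms where you \emph{verify} it. One small correction: the reason $\rho_{\F_\infty}$ linearizes is that its linear part $(a_1e^{c},a_\tau e^{c\tau})$ is non-torsion (since $c\neq 0$ and $\im\tau\neq 0$ prevent both multipliers from being roots of unity, so the group cannot sit in any $\mb E_{k',\lambda'}$), not that it is ``tangent to its linear part only to order $1$''.
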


\begin{proof}The linear part of the holonomy (\ref{eq:HolonFinfty_p=0}) of $\F_\infty$ 
is not torsion; following Theorem \ref{TH:formalG}, $\rho_{\F_\infty}$ can be linearized
by formal change of coordinate. Using equivalence relation (\ref{eq:Bifoliatedk=0}),
this can be done without modifying the normal form (\ref{eq:HolonF0}) for the holonomy 
of $\F_0$. It follows that $\lambda$ and $c$ are the only formal invariants in the case 
$0=p<k$. The list of formal models (\ref{eq:NormalFormNeighGroup0=p<k}) for $(U,C)$
is obtained in a similar way that realization part of the proof of Theorem \ref{ThmBifoliatedNeighborhood}.
We first start with models (\ref{eq:NormalFormNeigh1Form0=p<k}) on $(\tilde U,\tilde C)$ 
and look for transformations of that neighborhood preserving $\tilde C$ and inducing the translation
lattice $\lattice$ on it, and commuting with the $1$-forms $\omega_0$ and $\omega_\infty$.
Commutation with $\omega_0$ with holonomy constraint (\ref{eq:HolonF0}) shows that 
$$\left\{\begin{array}{ccc} \phi_1(x,y)&=& (x+1+f(x,y)\ ,\ a_1 y) \\ 
\phi_\tau(x,y)&=&(x+\tau+g(x,y)\ ,\ a_\tau \varphi_{k,\lambda}(y))\end{array}\right. $$
with $f,g$ holomorphic, vanishing along $\tilde C:\{y=0\}$. Now, invariance of $\omega_\infty$
gives $f=0$ and $g=g_{k,\lambda}$. We check that the monodromy of first integrals
$f_0=y$ and $f_t=ye^{cx}$ for $\omega_0$ and $\omega_\infty$ on the quotient 
$(\tilde U,\tilde C)/<\phi_1,\phi_\tau>$ are given by $\rho_{\F_0}$ and $\rho_{\F_\infty}$,
so that, by the classification part of Theorem \ref{ThmBifoliatedNeighborhood},
we get formal equivalence:
$$(U,C,\F_0,\F_\infty)\sim(\tilde U,\tilde C,\F_{\omega_0},\F_{\omega_\infty})/<\phi_1,\phi_\tau>.$$
\end{proof}

\subsubsection{The remaining case $0<p<k$}
We now have
\begin{equation}\label{eq:HolonFinfty}
\F_{\infty}\ :\ 
\left\{\begin{matrix}
a_1[z+cz^{p+1}+o(z^{p+1})]\hfill\\ a_\tau[z+c\tau z^{p+1}+o(z^{p+1})]
\end{matrix}\right.
\end{equation}
with $c\in\C^*$ and $0<p<k$ with $p\in m\Z_{>0}$. Set $k=mk'$ and $p=mp'$. 
Given $P(z)=\sum_{i=0}^{p'}\lambda_i z^i$ a polynomial of degree $p'$ precisely, 
define
\begin{equation}\label{eq:PrincPartOneForm}
\omega_P:=P(\frac{1}{y^m})\frac{dy}{y},\ \ \ v_P:=\frac{y}{P(\frac{1}{y^m})}\partial_y
\ \ \ \text{and}\ \ \ g_{k,\lambda,P}(y):=\int_0^y a_\tau\varphi_{k,\lambda}^*\omega_P-\omega_P.
\end{equation}
The group $\Z_{k'}$ of $k'^{\text{th}}$ roots of unity acts on the set of polynomials $P$ as follows:
\begin{equation}\label{eq:RootUnityAction}
(\mu, P(z))\mapsto P(\mu^{-1} z)
\end{equation}

\begin{thm}\label{thm:NormalFormNeigh0<p<k}
If $0<p<k$, then there exist $\lambda\in\C$ and $P\in\C^{p'}\times\C^*$ unique up to the $\Z_{k'}$-action
(\ref{eq:RootUnityAction}) such that $(U,C)$ is formally equivalent to the quotient 
of $(\tilde U,\tilde C):=(\C_x\times \C_y,\{y=0\})$ by the group generated by
\begin{equation}\label{eq:NormalFormNeighGroup0<p<k}
\left\{\begin{array}{ccc} \phi_1(x,y)&=& (x+1\ ,\ a_1 y) \\ 
\phi_\tau(x,y)&=&(x+\tau+g_{k,\lambda,P}(y)\ ,\ a_\tau \varphi_{k,\lambda}(y))\end{array}\right. 
\end{equation}
The pencil $\omega_t=\omega_0+t\omega_\infty$ of closed $1$-forms is generated by
\begin{equation}\label{eq:NormalFormNeigh1Form0<p<k}
\omega_0=\frac{dy}{y^{k+1}}+\lambda\frac{dy}{y}\ \ \ \text{and}\ \ \ \omega_\infty=dx-\omega_P.
\end{equation}
The holonomy representation of the pair $(\F_0,\F_\infty)$ is given by
\begin{equation}\label{eq:NormalFormNeighHol0<p<k}
\rho_{\F_0}\ :\ 
\left\{\begin{matrix}
1\mapsto a_1 y\hfill\\ \tau\mapsto a_\tau \varphi_{k,\lambda}(y)
\end{matrix}\right.\ \ \ \text{and}\ \ \ \rho_{\F_\infty}\ :\ 
\left\{\begin{matrix}
1\mapsto a_1\exp(v_P)\hfill\\ \tau\mapsto a_\tau \exp(\tau v_P)
\end{matrix}\right.
\end{equation}
when computed on the transversal $(T=\{x=0\},y)$.
\end{thm}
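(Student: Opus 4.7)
The plan is to follow the scheme of Theorem \ref{thm:NormalFormNeigh0=p<k}, with the extra complication that $\F_\infty$ is no longer linearizable: its holonomy is tangent to the identity at order $p+1<k+1$, and we must show that its full formal type is encoded by a single polynomial $P$ of degree exactly $p'=p/m$.

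\textbf{Reduction.} First apply Theorem \ref{THM:PencilFol} to obtain the canonical pair $(\F_0,\F_\infty)$ on $(U,C)$ with tangency order $\Tang(\F_0,\F_\infty)=p+1$. Theorem \ref{ThmBifoliated} then reduces the formal classification of $(U,C)$ to that of the pair of holonomies $(\rho_{\F_0},\rho_{\F_\infty})$, computed on a common transversal, up to simultaneous conjugation by diffeomorphisms $(\phi,\psi)$ with $\phi\equiv\psi\bmod y^{k+2}$.

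\textbf{Normalization and extraction of $(\lambda,P)$.} By Theorem \ref{TH:formalG}, put $\rho_{\F_0}$ into the model \eqref{eq:HolonF0}; this fixes $\lambda$ and restricts further conjugations to the centralizer $\mb E_{k,\lambda}$. The image of $\rho_{\F_\infty}$ is an abelian subgroup of $\Difffor$ containing a non-linearizable element, so Theorem \ref{TH:formalG} puts it inside a model $\mb E_{p,\mu}$ defined by a formal vector field $v$ of order $p+1$. Note that $\theta=0$ in the compatibility relation \eqref{eq:CompatibilityOrderk}, because $(a_1,a_\tau)$ is unitary torsion (Proposition \ref{P:holaffstruct}). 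The constraint $\phi\equiv\psi\bmod y^{k+2}$ then allows an order-by-order analysis of $v$ in the range $p+1\le n\le k+1$: the cocycle constraints from the torsion of $N_C$ and the available gauges in $\mb E_{k,\lambda}$ kill every coefficient except those at orders $p+1,\,p+m+1,\dots,\,k+1$, i.e.\ $v=v_P$ for a unique polynomial $P(z)=\sum_{i=0}^{p'}\lambda_i z^i$ of degree exactly $p'$. The residual gauge freedom is the subgroup $\{y\mapsto\zeta y\,:\,\zeta^k=1\}\subset\mb E_{k,\lambda}$; a direct computation gives $\psi^{*}v_P=v_{\tilde P}$ with $\tilde P(z)=P(\zeta^{-m}z)$, so setting $\mu=\zeta^m$ (a $k'$-th root of unity) recovers the action $\lambda_i\mapsto\mu^{-i}\lambda_i$ of the statement.

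\textbf{Realization.} Conversely, given $(\lambda,P)$, start from $(\tilde U,\tilde C)=(\C_x\times\C_y,\{y=0\})$ equipped with the closed $1$-forms $\omega_0,\omega_\infty$ of \eqref{eq:NormalFormNeigh1Form0<p<k}. Seek $\phi_\gamma$, $\gamma\in\lattice$, that stabilize $\tilde C$, act as lattice translations on it, and preserve both $\omega_0$ and $\omega_\infty$. Preservation of $\omega_0$ combined with the prescribed holonomy \eqref{eq:HolonF0} forces the $y$-component of $\phi_\gamma$ to be $a_\gamma\varphi_{k,\lambda}^{\sigma(\gamma)}(y)$ with $\sigma(1)=0,\ \sigma(\tau)=1$, and the $x$-component to be of the form $x+\gamma+h_\gamma(y)$. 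Preservation of $\omega_\infty=dx-\omega_P$ then pins down $h_\gamma$ as the primitive $\int_0^y a_\gamma(\varphi_{k,\lambda}^{\sigma(\gamma)})^{*}\omega_P-\omega_P$, which vanishes for $\gamma=1$ and equals $g_{k,\lambda,P}(y)$ for $\gamma=\tau$. A direct calculation of the holonomies of $\F_0$ and $\F_\infty$ on the transversal $\{x=0\}$ reproduces \eqref{eq:NormalFormNeighHol0<p<k}, and the realization half of Theorem \ref{ThmBifoliated} identifies the quotient with $(U,C,\F_0,\F_\infty)$.

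The main obstacle is the normalization step: showing that the combined gauge freedom—$\phi\in\mb E_{k,\lambda}$ and $\psi\equiv\phi\bmod y^{k+2}$—is exactly sufficient to kill every coefficient of $v$ of order not congruent to $1\bmod m$ in the range $p+1\le n\le k+1$, leaving precisely the $p'+1$ parameters $(\lambda_0,\dots,\lambda_{p'})$ defining $P$. This is where the finite dimensionality of the formal moduli comes from, and it must be handled one order at a time using the torsion constraint supplied by Proposition \ref{P:holaffstruct}.
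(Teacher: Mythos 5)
Your proposal follows the same route as the paper's proof: reduce to the pair of holonomies $(\rho_{\F_0},\rho_{\F_\infty})$ via Theorems \ref{THM:PencilFol} and \ref{ThmBifoliatedNeighborhood_k>0}, normalize $\rho_{\F_0}$ by Theorem \ref{TH:formalG}, normalize $\rho_{\F_\infty}$ to $a\exp(tv_P)$ with the residual gauge, identify the leftover freedom $y\mapsto\zeta y$, $\zeta^k=1$, acting by $P(z)\mapsto P(\zeta^{-m}z)$, and realize the normal form by the quotient construction with $g_{k,\lambda,P}$ forced by invariance of $\omega_\infty$. One correction to the normalization step as you describe it: the surviving invariants are not "the coefficients of $v$ at orders $p+1,p+m+1,\dots,k+1$" (that list has $k'-p'+1$ entries and $v_P$ is not a polynomial vector field of that shape). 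Rather, as in the paper, one first kills the non-resonant coefficients of $\rho_{\F_\infty}(\gamma)$ using the order-$m$ linear part $a_\gamma$, writes the resonant part as $\exp(v)$ with dual $1$-form $\omega=h(y^m)y^{-p-1}dy$, and then kills the \emph{holomorphic} part of $\omega$ by conjugations $y\mapsto y+cy^{qm+1}$, $qm>p$; what survives is exactly the polar part $\omega_P$, i.e.\ the $p'+1$ coefficients $\lambda_0,\dots,\lambda_{p'}$ (equivalently the resonant coefficients of $v$ in orders $p+1$ through $2p+1$). With that reading, your argument is the paper's.
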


\begin{proof}We first have to show 
that, under equivalence relation (\ref{eq:Bifoliatedk>0}), we can reduce the holonomy of $\F_\infty$
to the form (\ref{eq:NormalFormNeighHol0<p<k}). We can assume holonomy of $\F_0$ already normalized to (\ref{eq:HolonF0}), so that it remains to normalize the holonomy of $\F_\infty$ by help of some $\psi\in\Difffor$
with $\psi(y)=y+o(y^{k+1})$. Choose $\gamma\in\pi_1(C)$ such that $a_\gamma$ has order $m$
precisely, and consider $\varphi^\gamma:=\rho_{\F_\infty}(\gamma)$: 
we have $\varphi^\gamma(y)=a_\gamma y+c_0y^{p+1}+o(y^{p+1})$, for some $c_0\in\C^*$. If $\psi(y)=y+cy^{n+1}$, then 
$$\psi^{-1}\circ\varphi^\gamma\circ\psi(y)=\varphi^\gamma+a_\gamma c(1-a_\gamma^n)y^{n+1}+o(y^{n+1}).$$
By iterating such conjugacies with increasing $n$, this allow us to kill successively all coefficients 
of $\varphi^\gamma$ that are not of the form $y^{qm+1}$ (since $(1-a_\gamma^{qm})=0$).
Therefore, we can now assume $\varphi^\gamma(y)=a_\gamma y(1+y^p f(y^m))$; we note that $a_\gamma$
commutes with $\varphi^\gamma$, and therefore with $\varphi_0:=y(1+y^p f(y^m))$.
We can write $\varphi_0=\exp(v)$ for a unique formal vector field $v=y^{p+1}g(y^m)\partial_y$, with dual $1$-form
$\omega=\frac{h(y^m)}{y^{p+1}}dy$. Again, if $\psi(y)=y+cy^{n+1}$, we see that $\psi^*\omega=\omega-(ph(0)cy^{n-p-1}+o(y^{n-p-1}))dz$. By successive change of coordinates with $n=qm>p$, we can kill all positive coefficients
of $\omega$ so that it remains only the principal part $\omega_P=P(\frac{dy}{y^{m}})\frac{dy}{y}$ with $\deg(P)\le p$.
Finally, we have constructed some $\psi$ conjugating $\omega$ with $\Omega_P$, and therefore $v$ to $v_P$,
and $\varphi_0$ to $\exp(v_P)$. Since $\psi$ commutes with $a_\gamma y$, it is also conjugating $\varphi^\gamma$
to $a_\gamma\exp(v_P)$. By Theorem \ref{TH:formalG}, the whole holonomy representation, being in the centralizer
of $\varphi^\gamma$, must be normalized to $a\exp(t v_P)$ with $t\in\C$, $a^m=1$.
Here we use that $v_P$ can be normalized to some $v_{p,\mu}$ by a convenient $\psi(y)=y u(y^m)$.
We have thus reduced the pair of holonomy representations to the form (\ref{eq:NormalFormNeighHol0<p<k}).
One easily checks that the reduction is unique except that we could have used $\psi$ in the centralizer of $\rho_{\F_0}$,
i.e. $\psi(y)=ay+o(y^{p+1})$ with $a^m=1$. This gives rise to an action of $k^{\text{th}}$ roots of unity on the set
of principal parts $P$, which actually factors via an action of $k'^{\text{th}}$ roots of unity.
This ends the formal classification. It remains to prove holonomy representations of the pair of foliations 
$(\F_{\omega_0},\F_{\omega_\infty})$ defined by (\ref{eq:NormalFormNeigh1Form0<p<k}) 
on the quotient by (\ref{eq:NormalFormNeighGroup0<p<k}) is indeed given by (\ref{eq:NormalFormNeighHol0<p<k})
so that we can conclude by classification part of Theorem \ref{ThmBifoliatedNeighborhood_k>0}.
The holonomy of $\F_0$ computed on $x=1$ is clearly given by  (\ref{eq:NormalFormNeighHol0<p<k}).
For $\F_\infty$, the holonomy along the loop $1\in\lattice$ is also clearly like (\ref{eq:NormalFormNeighHol0<p<k}),
i.e. $\varphi^1=a_1\exp(v_P)$.
Again, by Theorem \ref{TH:formalG}, the whole holonomy representation $\rho_{\F_{\infty}}$, being in the centralizer
of $\varphi^1$, must be normalized to $a\exp(t v_P)$ with $t\in\C$, $a^m=1$. We conclude with (\ref{eq:HolonPencilUedakappa}) in Theorem \ref{THM:PencilFol} that holonomy along $\tau\in\lattice$ also fit with (\ref{eq:NormalFormNeighHol0<p<k}).
\end{proof}
\subsection{Proof of Theorem \ref{TH:FORMAL_CLASS}}

With the assumptions of theorem \ref{TH:FORMAL_CLASS}, let us fix the three formal basic invariants: the $m$-torsion monodromy representation attached to $N_C$, the Ueda type $k=mk'$ of $(U,C)$,  and the tangency order $p+1$ between the two distinguished foliations $\F_0$ and $\F_\infty$.

When $p=-1$, the calculation performed  in \ref{SS:transverse} shows that the set of formal equivalence classes is in one to one correspondance with the set of $k'+1$-uples of complex numbers of the form $(\lambda,0,.....,0)$ where $\lambda$ is the residue of $\omega_0$.

When $p=0$, Theorem \ref{thm:NormalFormNeigh0=p<k} provides a one to one correspondance between the set of formal equivalence classes and the set of $k'+1$-uples of the form $(\lambda,\lambda_0,0...0)$ with $\lambda_0=c\not=0$.
 
 The case $p=mp'>0$ is covered by Theorem \ref{thm:NormalFormNeigh0<p<k} where it is shown that the set of formal equivalence classes is in one to one correspondance with the set of $k'+1$-uples of the form $(\lambda,\lambda_0,...,\lambda_{p' },0,...,0)$ modulo the ${\Z}^{k'}$ action where the coefficient $\lambda_{p'}$ is not zero. As the possible values of $p'$ range over $\{1,....,k'-1\}$, this concludes the proof of the theorem.\qed

\section{About the analytic classification}\label{S:criteriaconv}

\subsection{Criteria of convergence}\label{sec:criteriaconv}

If we start with a convergent foliation $\F\in\Fol$, then one might ask when the formal $1$-form 
$\omega$ given by Corollary \ref{cor:folformalclosedform} is convergent. 
This is not true in general and one can find examples in \cite{Arnold} for instance.
In fact, the convergence of $\omega$ is equivalent to the existence of a convergent $\varphi\in\Diff$
conjugating the holonomy group $G$ to a subgroup of the models $\mb L$ or $\mb E_{k,\lambda}$
(see Theorem \ref{TH:formalG}). Here follow some criteria. The first one is a baby case of Bochner Theorem (see \cite[\S 2.1]{Frankpseudo}):

\begin{prop}\label{prop:Bochner}
If $G\subset\Diff$ is finite, then it is conjugated to a subgroup of $\mb L$.
\end{prop}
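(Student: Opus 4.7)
The plan is to use the classical averaging trick of Bochner, adapted to the one-variable setting where it is particularly transparent. Since $G$ is finite, every $g \in G$ has $g'(0)$ a root of unity (because $g^{|G|} = \mathrm{id}$ forces $(g'(0))^{|G|}=1$), so in particular $g'(0)\neq 0$ and each linearization $z\mapsto g'(0)z$ lies in $\mb L$.

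First I would define the candidate conjugator by averaging:
\begin{equation*}
\varphi(z) \;:=\; \frac{1}{|G|}\sum_{g\in G} \frac{g(z)}{g'(0)}.
\end{equation*}
This is a finite sum of convergent germs in $\Diff$, hence convergent. A direct differentiation at $z=0$ gives $\varphi'(0)=1$, so $\varphi\in\Diff$.

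Next I would verify the equivariance. For any $h\in G$, the map $g\mapsto g\circ h$ is a bijection of $G$, so
\begin{equation*}
\varphi\circ h(z) \;=\; \frac{1}{|G|}\sum_{g\in G}\frac{g(h(z))}{g'(0)}
\;=\; \frac{1}{|G|}\sum_{g\in G}\frac{(g\circ h)(z)}{g'(0)}.
\end{equation*}
Reindexing by $\tilde g := g\circ h$ and using $\tilde g'(0) = g'(0)\cdot h'(0)$, this sum becomes $h'(0)\cdot \varphi(z)$. Therefore
\begin{equation*}
\varphi\circ h\circ \varphi^{-1}(w) \;=\; h'(0)\,w,
\end{equation*}
which means $\varphi\, G\, \varphi^{-1}\subset \mb L$, as desired.

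There is essentially no obstacle: everything is a finite manipulation and the only point requiring a nontrivial input is knowing $g'(0)\neq 0$ (automatic, as $G\subset\Diff$) and that the averaged series in $z$ starts with the identity coefficient (immediate from $\frac{1}{|G|}\sum_g \frac{g'(0)}{g'(0)} = 1$). Compared to Bochner's theorem in several variables, here the finiteness of the sum replaces the use of Haar measure on a compact group, and convergence is automatic.
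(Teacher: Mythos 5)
Your proof is correct and is exactly the classical Bochner averaging argument that the paper invokes (it cites this as ``a baby case of Bochner Theorem'' and refers to \cite{Frankpseudo} rather than writing it out). The key points --- that $g\mapsto g'(0)$ is a homomorphism since all elements fix $0$, that $\varphi'(0)=1$ makes $\varphi$ invertible, and the reindexing $\tilde g=g\circ h$ --- are all handled properly.
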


The next is a direct consequence of Koenigs Theorem (see \cite[\S 2.1]{Frankpseudo}):

\begin{thm}\label{thm:Koenigs}
If $G\subset\Diff$ is not unitary, i.e. contains an element $f(z)=az+\cdots$ with $\vert a\vert\not=1$,
then it is conjugated to a subgroup of $\mb L$.
\end{thm}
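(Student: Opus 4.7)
The plan is to linearize a single hyperbolic element of $G$ via the classical Koenigs linearization theorem, and then use the centralizer description in Theorem \ref{TH:formalG} to propagate this linearization to the whole abelian group.

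First, I would pick some $f \in G$ of the form $f(z) = az + O(z^2)$ with $|a| \neq 1$, which exists by hypothesis. Koenigs' theorem (applicable in both the attracting regime $0 < |a| < 1$ and, after passing to $f^{-1}$, the repelling regime $|a| > 1$) yields a convergent $\phi \in \Diff$, tangent to the identity, such that $\phi \circ f \circ \phi^{-1}(z) = az$. Replacing $G$ by its convergent conjugate $\phi G \phi^{-1}$, I may therefore assume that the linear map $L_a(z) := az$ is itself in $G$.

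Next, since $G$ is abelian, every $g \in G$ commutes with $L_a$. Because $|a| \neq 1$ we have $a^n \neq 1$ for all $n > 0$, so $L_a$ is a non-torsion element of the model group $\mb L$. The final assertion of Theorem \ref{TH:formalG} tells us precisely that the centralizer in $\Difffor$ of a non-torsion element of $\mb L$ is $\mb L$ itself. Hence $g \in \mb L$ for every $g \in G$, so $G \subset \mb L$ after a convergent change of coordinate.

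The only genuine analytic ingredient here is Koenigs' theorem, i.e.\ the convergence of the linearizing change of coordinate for a hyperbolic germ; once this classical fact is invoked, the abelian hypothesis immediately reduces the statement to the formal centralizer computation already recorded in Theorem \ref{TH:formalG}, so no real obstacle remains.
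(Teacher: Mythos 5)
Your proof is correct and follows exactly the route the paper intends: the paper gives no written argument beyond citing Koenigs' theorem, and your completion (linearize one hyperbolic element convergently, then use the abelian hypothesis together with the centralizer statement in Theorem \ref{TH:formalG} to force every other element into $\mb L$) is the standard way to make that citation into a proof. No issues.
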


The last citerion is due to \'Ecalle and Liverpool (see \cite[\S 2.8.1]{Frankpseudo}):

\begin{thm}\label{thm:EcalleLiverpool}
If $G\subset\Diff$ is resonant, i.e. has only elements $f(z)=az+\cdots$ with $a^m=1$ for some $m$,
but is not virtually cyclic,
then it is conjugated to a subgroup of $\mb E_{k,\lambda}$.
\end{thm}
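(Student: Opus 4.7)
The plan is to upgrade the formal classification of Theorem \ref{TH:formalG} to an analytic one under the non-virtually-cyclic assumption, by invoking \'Ecalle--Voronin moduli theory. First I would reduce to the tangent-to-identity case: the linear parts define a morphism $G\to\C^*$ whose image is contained in the roots of unity (by resonance); its kernel $G_0\subset G$ has finite index. Any analytic conjugation of $G_0$ into the tangent-to-identity part of $\mb E_{k,\lambda}$ will automatically conjugate $G$ itself into $\mb E_{k,\lambda}$, since the full centralizer of $\exp(v_{k,\lambda})$ in $\Diff$ is already $\mb E_{k,\lambda}$ by the final clause of Theorem \ref{TH:formalG}.

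Next, I would observe that because $G$ is not virtually cyclic, $G_0$ itself is not cyclic (a finite extension of a cyclic group is virtually cyclic). In particular $G_0\neq\{\mathrm{id}\}$: pick $f\in G_0\setminus\{\mathrm{id}\}$ and write $f(z)=z+a\, z^{k+1}+\cdots$ with $a\neq 0$. By Theorem \ref{TH:formalG}, there exists a formal $\hat\varphi\in\Difffor$ with $\hat\varphi_*f=\exp(v_{k,\lambda})$ for unique $k$ and $\lambda$; moreover the whole group $\hat\varphi_*G_0$ lies in the formal centralizer of $\exp(v_{k,\lambda})$, i.e.\ in the formal flow $\{\exp(tv_{k,\lambda})\ ;\ t\in\C\}$. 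The non-cyclicity of $G_0$ provides an element $g\in G_0$ such that $\hat\varphi_*g=\exp(t_0 v_{k,\lambda})$ with $t_0\in\C$ not a rational multiple of the time of any iterate of $f$; equivalently, $g$ commutes with $f$ but is not contained in the cyclic group generated by $f$ modulo periodicities.

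The heart of the argument, and the main obstacle, is to promote $\hat\varphi$ to a convergent conjugation. This is where \'Ecalle--Voronin theory intervenes: a tangent-to-identity germ $f$ of class $(k,\lambda)$ possesses a complete system of analytic invariants, the \'Ecalle--Voronin moduli, and $f$ is analytically conjugated to the model $\exp(v_{k,\lambda})$ if and only if these moduli vanish. The key rigidity result states that the analytic centralizer $\mathrm{Cent}(f,\Diff)$ is ``as large as possible'' (i.e.\ a non-discrete, one-parameter analytic family together with roots of unity) only in the case of vanishing moduli; otherwise, $\mathrm{Cent}(f,\Diff)$ is virtually cyclic, generated by fractional iterates permuted by the finite symmetry group of the \'Ecalle--Voronin data. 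Applying this to our situation: since $G_0\subset\mathrm{Cent}(f,\Diff)$ is abelian, contains $f$, and is \emph{not} virtually cyclic, the moduli of $f$ must vanish, so $\hat\varphi$ is analytic.

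Finally, once $f$ has been analytically normalized to $\exp(v_{k,\lambda})$, every element of $G$ commutes with this model, so by the convergent centralizer computation in Theorem \ref{TH:formalG} (the last clause), $G$ lands inside $\mb E_{k,\lambda}$. The delicate point, which I would cite from \cite{Frankpseudo} (and ultimately \'Ecalle and Liverpool), is precisely the rigidity statement for centralizers in the resonant case; the rest of the proof is bookkeeping on finite-index subgroups and an application of the already-established formal classification.
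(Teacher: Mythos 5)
The paper offers no proof of this statement: it is quoted as a known theorem of \'Ecalle and Liverpool, with a pointer to \cite[\S 2.8.1]{Frankpseudo}. Your sketch is a faithful reconstruction of the argument behind that citation, and the reduction you perform is correct: pass to the tangent-to-identity kernel $G_0$ of the linear-part morphism, formally embed $G_0$ in the flow of $v_{k,\lambda}$ via Theorem \ref{TH:formalG}, extract two elements whose ``times'' have irrational ratio, invoke the rigidity of analytic centralizers (nontrivial horn maps commute only with the translations $w\mapsto w+t$ with $t\in\Q$ modulo the petal periodicity, so an irrational ratio kills all Fourier coefficients and forces trivial \'Ecalle--Voronin data), and finish with the centralizer clause of Theorem \ref{TH:formalG}. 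Two caveats, both harmless in the paper's context but worth making explicit since the theorem is stated for an arbitrary $G\subset\Diff$. First, you use throughout that $G$ is abelian (via $G_0\subset\mathrm{Cent}(f,\Diff)$); without that hypothesis the statement is false (non-solvable resonant groups are not virtually cyclic and are not conjugate into any $\mb E_{k,\lambda}$), so abelianity is an implicit standing assumption here, as $G$ is a holonomy image of $\pi_1(C)\simeq\Z^2$. Second, you twice use finite generation without saying so: the image of the linear-part morphism in the roots of unity need not be finite for a general resonant group, and ``$G_0$ not cyclic'' does not by itself produce two elements with time-ratio outside $\Q$ (the time-image could be a non-cyclic subgroup of $\Q\cdot t_f$ such as $\Z[1/2]\,t_f$, which is not virtually cyclic yet admits no irrational ratio). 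For $G$ finitely generated both steps are fine, since a finitely generated subgroup of $\Q$ is cyclic. Finally, be aware that the ``key rigidity result'' you cite is not a lemma feeding into the theorem so much as its entire analytic content; your proof is really a correct group-theoretic reduction of the statement to that result, which is exactly the piece of \cite{Frankpseudo} the authors are quoting.
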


\subsection{Proof of Theorem \ref{thm:3convergent}}
First observe that, due to Theorem \ref{THM:PencilFol}, the holonomy of $\F_t$ is virtually cyclic
if, and only if $\frac{t}{1+t\tau}\in\Q$. On the other hand, if the holonomy $\rho_{\F_t}$ is not virtually cyclic, 
then Theorem \ref{thm:EcalleLiverpool} asserts that it preserves the (analytic) $1$-form
$\omega_{k,\lambda}$ and the closed $1$-form $\omega$ defining $\F_t$ constructed in 
Corollary \ref{cor:folformalclosedform} is convergent. If we have two such elements $t_1,t_2$ in the pencil,
then the full pencil can be recovered by the pencil of convergent closed meromorphic $1$-forms 
generated by $\omega_{t_1}$ and $\omega_{t_2}$. Moreover, the holonomies $\rho_{\F_{t_1}}$
and $\rho_{\F_{t_2}}$ being analytically conjugated to its formal normal form (\ref{eq:NormalFormNeighHol0=p<k})
or (\ref{eq:NormalFormNeighHol0<p<k}), we can use classification part of Theorem \ref{ThmBifoliatedNeighborhood}
or (\ref{ThmBifoliatedNeighborhood_k>0}) to conjugate $(U,C)$ analytically to its formal model of Theorem
(\ref{thm:NormalFormNeigh0=p<k}) or (\ref{thm:NormalFormNeigh0<p<k}).

Now, if $3$ elements of the pencil are convergent, say $\F_{t_1},\F_{t_2},\F_{t_3}$, we can also recover 
the full pencil analytically by considering those foliations $\F_t$ having constant cross-ratio with those $3$:
for each $c\in\C\setminus\{0,1,\infty\}$ and at each point $p\in U\setminus C$ close enough to $C$, 
there is a unique direction $T_p\F$ such that 
$$\mbox{cross-ratio}(T_p\F_{t_1},T_p\F_{t_2},T_p\F_{t_3},T_p\F)=c.$$
This defines a foliation $\F$ on the germ $U\setminus C$; let us show that it extends on the neighborhood of $C$.
We check it in the case where all $\Tang(\F_{t_i},\F_{t_j})=(k+1)[C]$
and leave as an exercise the case when one of the $3$ foliations is $\F_\infty$ with lower tangency $p+1$.
In local coordinates $(x,y)$ where $C$ is defined by $\{y=0\}$ and $\F_{t_i}$ by $\frac{dy}{dx}=s_i(x,y)$,
with say 
\begin{equation}\label{eq:LocForm3Fol}
s_1(x,y)=0,\ \ \ s_2(x,y)=y^{k+1}f(x,y)\ \ \ \text{and}\ \ \ s_3(x,y)=y^{k+1}g(x,y),
\end{equation}
$$\text{with}\ \ \ f,g,f-g\not=0, \  \ \ (\ \Leftrightarrow\ \Tang(\F_{t_i},\F_{t_j})=(k+1)[C])$$
the foliation $\F$ is defined by 
\begin{equation}\label{eq:InvCrossRat}
\frac{dy}{dx}=s(x,y)\ \ \ \text{with}\ \ \ c=\frac{\frac{s-s_1}{s_2-s_1}}{\frac{s-s_3}{s_2-s_3}}\ \Leftrightarrow\ s=\frac{s_1(s_2-s_3)-cs_3(s_2-s_1)}{(s_2-s_3)-c(s_2-s_1)}
.
\end{equation}
Clearly, if $s_1,s_2,s_3$ satisfy (\ref{eq:LocForm3Fol}), then 
$$\F\ :\ \frac{dy}{dx}=s=y^{k+1}\frac{cfg}{(c-1)f+g}$$
which is clearly holomorphic for all $c\not=1-g(0,0)/f(0,0)$. In particular, $\F$  belongs to $\Fol$. For obvious reasons of cardinality, one can find among those $c$ two foliations whose holonomy is not virtually cyclic and we can apply the previous reasoning.
\qed

\subsection{Examples of huge moduli} 

Recall (see \cite[Section 2]{Frankpseudo}) that the moduli space of germs 
\begin{equation}\label{eq:diffeoLinearParta}
\varphi(z)=az+\sum_{n>1}a_nz^n\in\Diff,\ \ \ \vert a\vert=1
\end{equation}
for analytic classification is infinite dimensional whenever $a$ is periodic, 
or does not satisfy the Brjuno diophantine condition. Let us be more precise.

If $a=e^{2i\pi\alpha}$, $\alpha\in\R\setminus\Q$, denote by $\left(\frac{p_n}{q_n}\right)_{n\in\Z_{\ge 0}}$, 
$p_n,q_n\in\Z$, $q_n>0$,
the sequence of truncations of the continued fraction for $\alpha$ (see \cite{PerezMarco}):
this is the fastest approximation sequence of $\alpha$ by rational numbers.
Then Brjuno condition writes
\begin{equation}\label{eq:Brjuno}
(\mathcal B)\ :\ \sum_{n\ge 0}\frac{\log(q_{n+1})}{q_n}<\infty
\end{equation}
and means that the approximation is not very fast. Brjuno proved that, for $a$ satisfying 
condition $(\mathcal B)$, any germ (\ref{eq:diffeoLinearParta}) is analytically linearizable.
On the other hand, Yoccoz proved the sharpness of this condition: for any $a$ non periodic
and not satisfying $(\mathcal B)$, then there exists non linearizable germs (\ref{eq:diffeoLinearParta}).
More precisely, Yoccoz constructs examples with infinitely many periodic points accumulating
on $0$, and can even prescribe, for each periodic orbit, the multiplicator of the return map
(see \cite{Yoccoz,PerezMarco}), giving an infinite number of parameters of freedom. 
In other words, as soon as Brjuno condition $(\mathcal B)$ is violated, the moduli space is infinite dimensional.

\begin{cor}For certain non torsion bundles $L\in\mathrm{Jac}(C)$ not satisfying diophantine condition (\ref{eq:diophantine}),
and for any $t_0\in\C^*$, there exists an infinite dimensional family of non analytically equivalent neighborhoods $(U,C)$
with $N_C=L$ and only $\F_0$ and $\F_{t_0}$ are convergent in the formal pencil $\Folf$. 
In particular, the transversal fibration $\F_\infty$ is divergent.
\end{cor}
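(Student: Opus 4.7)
The idea is to suspend a pair $(\rho_{\F_0},\rho_{\F_{t_0}})$ of holonomy representations via the realization part of Theorem~\ref{ThmBifoliatedNeighborhood} (case $k=0$), choosing $\rho_{\F_0}$ inside an infinite dimensional Yoccoz/P\'erez-Marco family of non-linearizable germs and $\rho_{\F_{t_0}}$ in a fixed linear model; the analytic non-equivalence of the resulting neighborhoods will then be read off from the analytic non-conjugacy of the representations.

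First I pick a non torsion $L\in\mathrm{Jac}(C)$ with flat unitary monodromy $(a_1,a_\tau)\in(S^1)^2$ such that $a_1=e^{2i\pi\alpha}$ with $\alpha\notin\mathcal B$; such an $L$ automatically violates (\ref{eq:diophantine}). Fix $t_0\in\C^*$ with both $a_1e^{t_0}$ and $a_\tau e^{t_0\tau}$ of modulus $\neq 1$, and set
$$\rho_{\F_{t_0}}(\sigma)(z):=a_\sigma e^{t_0\sigma}z,\ \ \sigma\in\{1,\tau\}.$$
By P\'erez-Marco's theorems on non-linearizable germs with large centralizers (see the introduction), there is an infinite dimensional family $\{(f_1^\lambda,f_\tau^\lambda)\}_{\lambda\in\Lambda}$ of commuting pairs in $\Diff$ with respective linear parts $a_1,a_\tau$, pairwise non analytically conjugate (parametrized e.g.\ by prescribed multiplicators at a sequence of periodic orbits of $f_1^\lambda$ accumulating at $0$). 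Define $\rho_{\F_0}^\lambda(1):=f_1^\lambda$, $\rho_{\F_0}^\lambda(\tau):=f_\tau^\lambda$. The pair $(\rho_{\F_0}^\lambda,\rho_{\F_{t_0}})$ satisfies the compatibility rule (\ref{CompatibilityDiscrepency}) with the nowhere vanishing holomorphic $1$-form $\omega=t_0\,dx$ on $C$. Applying Theorem~\ref{ThmBifoliatedNeighborhood} produces an analytic bifoliated germ $(U_\lambda,C,\F_0,\F_{t_0})$ with $N_C=L$ and $\F_0,\F_{t_0}\in\Fol$.

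It remains to exclude the convergence of any third element of the pencil, and to count moduli. If $\F_\infty$ (or any $\F_t$ with $t\neq 0,t_0$) were convergent, then $(U_\lambda,C)$ would admit simultaneously an analytic transverse fibration (namely $\F_\infty$) and the analytic tangent foliation $\F_{t_0}$, realizing $(U_\lambda,C)$ as an \emph{analytic} suspension of the linear representation $\rho_{\F_{t_0}}$, hence analytically identifying it with the linear model $(N_C,0)$; but on $(N_C,0)$ the canonical formal $\F_0$ is the unitary linear connection on $N_C$ (Section~\ref{sec:FoliationNonTorsion}), so the analytic $\F_0$ we built would have linear holonomy, contradicting the non-linearizability of $\rho_{\F_0}^\lambda$. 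Hence the full pencil diverges except at $\F_0$ and $\F_{t_0}$. Finally, since in the non torsion case the formal pencil $\Folf$ is canonically attached to $(U_\lambda,C)$, any analytic equivalence between $(U_\lambda,C)$ and $(U_{\lambda'},C)$ fixing $C$ must preserve the pair $(\F_0,\F_{t_0})$, and by the classification part of Theorem~\ref{ThmBifoliatedNeighborhood} forces $\rho_{\F_0}^\lambda$ and $\rho_{\F_0}^{\lambda'}$ to be analytically conjugate in $\Diff$; this gives $\lambda=\lambda'$, so $\{(U_\lambda,C)\}_{\lambda\in\Lambda}$ is pairwise non analytically equivalent.

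\textbf{Main obstacle.} The delicate ingredient is the input from one-variable dynamics: we need an infinite dimensional family of pairwise non analytically conjugate \emph{commuting pairs} in $\Diff$ with prescribed unitary linear parts, rather than isolated non-linearizable germs. This is precisely why we invoke P\'erez-Marco's examples of non-linearizable germs with huge centralizers and why we impose $a_1$ to be non-Brjuno (a condition on the one-dimensional dynamics) rather than merely the violation of (\ref{eq:diophantine}) by $N_C$; this restriction on the bundle $L$ is the reason the statement says ``for certain'' $L$ and not for all $L$ violating the diophantine condition.
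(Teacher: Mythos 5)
Your overall strategy is the paper's: realize a non-linearizable holonomy for $\F_0$ together with the linear holonomy for $\F_{t_0}$ via the realization part of Theorem \ref{ThmBifoliatedNeighborhood}, then exclude convergence of any third member of the pencil by Koenigs plus the classification part. But there is a genuine gap at the one-variable input. You need an infinite dimensional family of pairwise non-conjugate \emph{commuting} pairs in $\Diff$ with prescribed linear parts $(a_1,a_\tau)$, and you claim this follows from P\'erez-Marco's centralizer theorems once $a_1$ is non-Brjuno. That is not available: P\'erez-Marco produces non-linearizable germs with uncountable centralizers, but one cannot prescribe that the centralizer contains an element with an \emph{arbitrary given} linear part $a_\tau$; indeed the paper explicitly records (end of Section \ref{S:criteriaconv}) that when $\langle a_1,a_\tau\rangle$ is not cyclic the existence of non-linearizable abelian representations with these multiplicators is open and may require more than violation of $(\mathcal B)$. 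The fix — and the point of the paper's phrase ``for certain $L$'' — is to choose $L$ so that the unitary holonomy group $\langle a_1,a_\tau\rangle\subset S^1$ is \emph{cyclic}, generated by some $a$ violating $(\mathcal B)$; then both generators of $\pi_1(C)$ can be sent to iterates of a single Yoccoz germ $\varphi(z)=az+o(z)$, commutativity is automatic, and Yoccoz's infinitely many free parameters give the moduli. Your choice of $L$ (only constraining $a_1$) does not secure this.

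Two smaller points. First, your exclusion of a third convergent foliation only really treats $\F_\infty$: for a tangent $\F_{t_1}$, $t_1\neq 0,t_0$, there is no ``transverse fibration'' to invoke; the correct argument (the paper's) is that $\rho_{\F_{t_1}}$ is non-unitary, hence linearizable by Theorem \ref{thm:Koenigs}, and the classification part of Theorem \ref{ThmBifoliatedNeighborhood} applied to the pair $(\F_{t_0},\F_{t_1})$ then linearizes $(U,C)$, contradicting the non-linearizability of $\rho_{\F_0}$. Second, your requirement that \emph{both} $a_1e^{t_0}$ and $a_\tau e^{t_0\tau}$ have modulus $\neq 1$ is an unnecessary restriction conflicting with ``for any $t_0\in\C^*$''; since $1,\tau$ are $\R$-independent, at least one multiplier is automatically non-unitary for every $t_0\neq 0$, which is all that is needed.
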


The similar result with a convergent fibration, i.e. with $t_0=\infty$, is well-known (see \cite{Arnold}) 
and can be obtained by suspension method.

\begin{proof} Choose $N_C=L$ such that, in the linear model, the holonomy of the unitary foliation
$\F_0$ (\ref{eq:holPencilLinear}) is cyclic, generated by $y\mapsto ay$ with $a$ violating condition $(\mathcal B)$.
Then we can realize all Yoccoz non linearizable dynamics as holonomy of $\F_0$ simultaneously with the linear holonomy
for $\F_{t_0}$ by using realization part of Theorem \ref{ThmBifoliatedNeighborhood}. If another formal foliations
$\F_{t_1}$ were convergent, then its holonomy representation (being non unitary) would be linearizable by Theorem \ref{thm:Koenigs}
and we could conclude that $(U,C)$ was itself analytically linearizable by applying classification part 
of Theorem \ref{ThmBifoliatedNeighborhood} to the pair $(\F_{t_0},\F_{t_1})$; this would contradict 
that holonomy of $\F_0$ is not linearizable. In a similar way, if the formal transversal fibration $\F_\infty$
were convergent, then $(U,C)$ would be the suspension of the linear holonomy of $\F_{t_0}$, and 
therefore itself linearizable; contradiction again.
\end{proof}

Diophantine condition (\ref{eq:diophantine}) for the bundle $N_C$ implies Siegel diophantine condition 
(see \cite{PerezMarco}) for the holonomy of its unitary foliation $\F_0$. We do not know what is the optimal 
diophantine condition for $N_C$ to be linearizable. There are two problems. First, we do not know what 
is the optimal condition on a pair $(a_1,a_\tau)\in\C^*\times\C^*$ such that there is a non linearizable
abelian representation with these multiplicators (see \cite{Moser,PerezMarco}); when the group $<a_1,a_\tau>$ 
is not cyclic, we might 
expect more constraint than just violating condition $(\mathcal B)$ for both $a_1$ and $a_\tau$.
Secondly, we do not know if there exists examples of neighborhoods $(U,C)$ with divergent $\F_0$,
so that even assuming that $a_1$ or $a_\tau$ satisfies  condition $(\mathcal B)$, we cannot conclude
that all neighborhoods are linearizable.

Let us now turn to the torsion case $a^m=1$. For each prescribed formal model $a\exp(v_{k,\lambda})$,
there is an explicit moduli space of infinite dimension for the analytic conjugacy in $\Diff$. This has been 
described in various ways by Voronin, \'Ecalle, Martinet, Ramis, Malgrange \cite{Voronin,MartinetRamis,Ecalle,Malgrange}
(see also \cite[Section 2.7]{Frankpseudo}): the moduli space is more or less identified with $\Diff^{2k}$.
We have a similar description of the moduli space for representations $\pi_1(C)\to\mb E_{k,\lambda}$
as soon as the image is virtually cyclic, therefore completing the statement of Theorem \ref{thm:EcalleLiverpool}
(see \cite[Sections 2.8 and 2.10]{Frankpseudo}).

\subsection{Proof of Theorem \ref{thm:EmbedEcalleVoronin}}We assume $t_1,t_2\in\C$ and let the case
$t_1$ or $t_2=\infty$ for the reader. Since the holonomy of $\F_{t_1}$ is assumed to 
be virtually cyclic, there is a huge moduli of non analytically conjugated representations in $\Diff$. Using 
realization part of Theorem \ref{ThmBifoliatedNeighborhood_k>0}, we can realize such holonomy representations
for $\F_{t_1}$ simultaneously with the formal model for $\F_{t_2}$. After convenient conjugacy by a polynomial 
diffeomorphism $\phi\in\Diff$ of degree $k+1$, we can assume that the pair of holonomies coincide up to order $k$ 
and fit with the formal invariants $P$ of Theorem \ref{thm:NormalFormNeigh0<p<k}.\qed

\begin{remark}In case $\frac{t_i}{1+t_i\tau}\in\Q$, $i=1,2$, we can deform independently the two holonomy
representations following the method presented in the proof of Theorem \ref{thm:EmbedEcalleVoronin}. Therefore, we can explicitely describe
the moduli space of neighborhoods with $2$ convergent foliations when $N_C$ is torsion. However, in the 
non torsion case, even the moduli space of diffeomorphism germs of type (\ref{eq:diffeoLinearParta}) with 
$a$ not satifying $(\mathcal B)$ is totally unknown: even the regularity properties of its structure are widely open.
We just know that we can inject Ecalle-Voronin moduli: it suffices, for each resonant germ $\psi$, 
to use Yoccoz method to construct a germ $\varphi(z)=az+o(z)$ with infinitely many periodic orbits 
accumulating on $z=0$ with all return maps analytically equivalent to $\psi$.
\end{remark}

\section*{Appendix: Action of the involution on formal models}
For each prescribed normal bundle $N_C$ and  $k=\utype(U,C)<\infty$, we have given a list 
$(U_{\lambda,\Lambda},C)$ of models for the classification up to formal equivalence
inducing the identity on $C$ (see (\ref{eq:equivNeighbId})).
If we relax the fact that $\phi$ induces the identity on $C$,
we have to take into account the action of $\mbox{Aut}(C)$. First of all, the two closed meromorphic $1$-forms 
$\omega_0$ and $\omega_\infty$ given in (\ref{eq:NormalFormNeigh1Form0<p<k}) induce by duality two vector fields 
$$v_0=\partial_x\ \ \ \text{and}\ \ \ v_\infty=\frac{1}{\omega_0(v_\Lambda)}(\partial_x+v_\Lambda)
$$
which generate a $2$-dimensional group of symetries for $(U_{\lambda,\Lambda},C)$
acting transitively on $C$ and its complement $U\setminus C$; 
it induces an action by translation on $C$, but acting trivially on $H^1(C)$.
On the other hand, $\mbox{Aut}(C)$ acts on $H^1(C)$ as a cyclic group of order $q=2, 3, 4$ or $6$.
We would like to investigate the induced action on the set of normal forms.
Except the $3$ exceptional configurations associated to $q=3,4,6$, recall that $q=2$ generically.  
For convenience, denote by $U(a_1,a_\tau,\lambda,\Lambda)$ the neighborhood $(U_{\lambda,\Lambda},C)$ where $(a_1,a_\tau)$ generate the unitary monodromy on $N_C$ (and therefore determine $N_C$), assuming that the group $<a_1,a_\tau>$ has order $m=\ord(N_C)$, and $k=\utype(U,C)$ is the Ueda type. Note that we have a well defined action of ${\mathbb C}^*$ on $\Lambda$ defined, for $\xi\in{\C}^*$ by $\xi . (\lambda_0,\lambda_1...,\lambda_{k'-1})=(\lambda_0,{\xi}^{-1}{\lambda}_1,...,\xi^{-(k'-1)}{\lambda}_{k'-1})$

Then one checks directly that the action of the involution induces a diffeomorphism
$$\phi:U(a_1,a_\tau,\lambda,\Lambda)\to U(a_1^{-1},a_\tau^{-1},-\lambda,-(\xi^m .\Lambda))\ $$
defined by $\ (x,y)\mapsto(-x,\xi y)$, $\xi^k=-1$ (the Ueda type  remains unchanged by this conjugacy map). \begin{remark}
In the exceptional cases $q>2$ and when $\utype (C)<\infty$, it is quite uneasy to explicit conjugacy map associated to  transformation of the elliptic curve of finite order $>2$. 
However, it is still possible to describe equivalent normal forms by the bifoliated method, 
but this involves rather complicated formulae. 
As an example,  the reader may amuse himself to prove that, in Theorem \ref{thm:NormalFormNeigh0<p<k} 
and when $N_C=\mathcal O_C$,
and for $\tau=i$ :
\begin{enumerate}
\item When $p=1$, $U(\lambda, \Lambda)\sim U(-i\lambda -\lambda_0, i(\lambda_0,{\xi}^{-1}\lambda_1),0,...,0),\ \xi=\exp{3i\pi/4}$.
\item When $k=3$, $p=2$, $U(\lambda, \Lambda)\sim U(-i\lambda -\lambda_0, i(\lambda_0,-i\lambda_1+\frac{{\lambda_2}^2}{2},-\lambda_2))$
\end{enumerate}
\end{remark}


\end{document}